\def\Te{\mathbb R}
\def\Z{\mathbb Z}
\def\N{\mathbb N}
\def\1{\mathbbm 1}
\def\PP{\mathscr{P}}
\newcommand\dd{\mathrm{{\bf d}}}
\newcommand\dT{\mathrm{{\bf d}}_T}
\newcommand\D{\mathcal{D}}
\newcommand\Dr{\mathcal{D}_r}
\newcommand\Cr{\mathcal{C}_r}
\newcommand\T{\mathcal{T}}
\newcommand\Tr{\mathcal{T}_r}
\newcommand\pT{p_{\: T, \sigma}}
\newcommand\I{\mathrm{{\bf I}}}
\newcommand{\n}{\mathrm{{\bf n}}}
\newcommand{\m}{\mathrm{{\bf m}}}
\newcommand{\LT}{\mathrm{LT}}
\newcommand{\CF}{\mathrm{CF}}
\newcommand{\Rot}{\mathrm{Rot}}
\newcommand{\Ell}{\mbox{\boldmath$\ell$}}
\newcommand{\interior}[1]{\raise0.2ex\hbox{$\displaystyle{\mathop{#1}^{\circ}}$}}
\newcommand{\mx}[1]{\mathbf{#1}}
\renewcommand\phi{\varphi}
\renewcommand\emptyset{\varnothing}
\newtheorem{theorem}{Theorem}[section]
\newtheorem{proposition}[theorem]{Proposition}
\newtheorem{corollary}[theorem]{Corollary}
\newtheorem{conjecture}[theorem]{Conjecture}
\newtheorem{lemma}[theorem]{Lemma}
\numberwithin{equation}{section}
\theoremstyle{definition}
\newtheorem{defn}[theorem]{Definition}
\newtheorem{definition}[theorem]{Definition}
\theoremstyle{remark}
\newtheorem*{remark*}{Remark}
\newtheorem{remark}[theorem]{Remark}
\newtheorem*{example*}{Example}
\begin{document}

\title{Enumeration of Non-Crossing Pairings on Bit Strings}
\author{Todd Kemp$^{(1)}$}
\address{$(1)$ Department of Mathematics, MIT \\ 77 Massachusetts Avenue, Cambridge, MA \; 02139}
\email{tkemp@math.mit.edu}
\author{Karl Mahlburg$^{(2)}$}
\address{$(2)$ Department of Mathematics, MIT \\ 77 Massachusetts Avenue, Cambridge, MA \; 02139}
\email{mahlburg@math.mit.edu}
\author{Amarpreet Rattan$^{(3)}$}
\address{$(3)$ Heilbronn Institute, University of Bristol \\ Bristol, UK}
\email{amarpreet.rattan@bristol.ac.uk}
\author{Clifford Smyth$^{(4)}$}
\address{$(4)$ Mathematics and Statistics Department, University of North Carolina Greensboro \\ 116 Petty Building, Greensboro, NC \; 27402 }
\email{cdsmyth@uncg.edu}

\begin{abstract}
A non-crossing pairing on a bitstring matches $1$s and $0$s in a manner such that the pairing diagram is nonintersecting. By considering such pairings on arbitrary bitstrings $1^{n_1} 0^{m_1} \dots 1^{n_r} 0^{m_r}$, we generalize classical problems from the theory of Catalan structures.  In particular, it is very difficult to find useful explicit formulas for the enumeration function $\phi(n_1, m_1, \dots, n_r, m_r)$, which counts the number of pairings as a function of the underlying bitstring.  We determine explicit formulas for $\phi$, and also prove general upper bounds in terms of Fuss-Catalan numbers by relating non-crossing pairings to other generalized Catalan structures (that are in some sense more natural).  This enumeration problem arises in the theory of random matrices and free probability.%, and has applications in calculating moments of $\mathscr{R}$-diagonal operators.

\end{abstract}

\maketitle

\section{Introduction}

\subsection{The Knights and Ladies of the Round Table}

The objective of this paper is to address an enumeration problem which can be described in the following medieval terms.  King Arthur wishes to host a soir\'ee for the Knights of the Round Table.  There are $n$ Knights, so Arthur invites $n$ Ladies to Camelot for the event.  His intent is to seat each Knight next to a Lady so they may fraternize; however, before he has the chance to set the seating arrangement, all $2n$ guests seat themselves at the Round Table in a random configuration.  King Arthur now must consider two questions.  (1) Is it possible for the Knights and Ladies to pair off to talk while seated in this manner at the Round Table, {\em with no two conversations crossing}?  (2) If so, in how many {\em distinct} ways may they pair off to converse with no conversations crossing?

\medskip

Let $0$ denote a seat occupied by a Knight, and let $1$ denote a seat occupied by a Lady.  Figure \ref{fig Knights and Ladies} shows a possible random seating configuration (with $n=12$), and two possible solutions to King Arthur's problem.

\begin{figure}[htbp]
\begin{center}
	\includegraphics{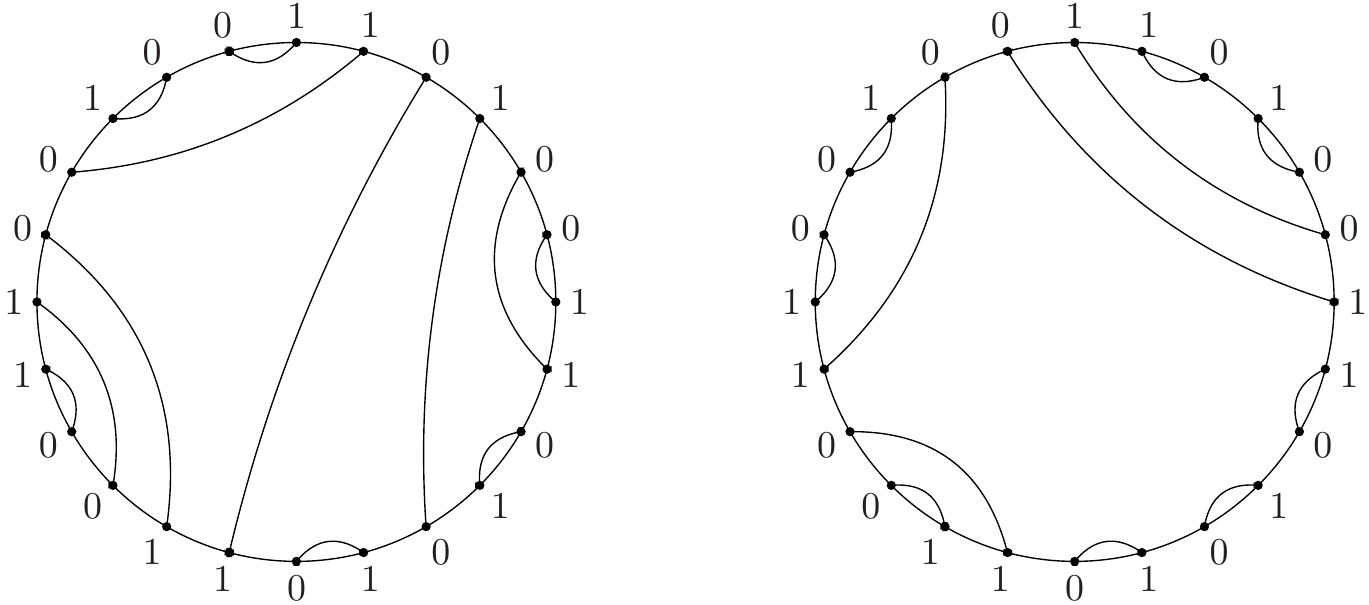}
	\caption{\label{fig Knights and Ladies}A random seating arrangement of $12$ Knights ($0$) and $12$ Ladies ($1$), along with two possible non-crossing conversation patterns.}
\end{center}
\end{figure}

It is relatively straightforward to see that there always exists a solution to the
Problem of the Knights and Ladies of the Round Table for a given ordering of knights
and ladies.  In short, in any random seating arrangement of the $2n$ guests, there must be a Knight sitting next to a Lady; pair them off and remove them from consideration, since their conversation cannot cross any other couple's.  This reduces the problem to one with $2(n-1)$ guests.  Continuing by induction, the problem is reduced to the case $n=1$ (a single couple) for which there is obviously a solution.  It can happen that this procedure produces the {\em unique} non-crossing pairing, if it happens that all the Knights are seated together as are all the Ladies; in string notation, the configuration is $11\cdots 100\cdots 0 = 1^n 0^n$.  As we will see in Section \ref{section rough bounds}, typically the number is much larger than $1$.

\begin{remark} The only string for which there is a unique non-crossing pairing is $1^n 0^n$.  To be precise, this is the only one up to rotation: as we are thinking of the configuration on a circle, rather than on a line as written here, we identify $1^n 0^n$ with $1^k 0^n 1^{n-k}$ for $0\le k\le n$.  We will discuss this inherent rotational symmetry in Section \ref{section rotation}. \end{remark}

The set of such non-crossing pairings of a random bit string is important in less medieval applications in modern mathematics.  The motivation for the problem comes from Random Matrix Theory and Free Probability Theory.  Let $X_N$ denote an $N\times N$ matrix whose entries are all independent complex normal random variables: for $1\le j,k\le N$, $[X_N]_{jk} = a_{jk}+ib_{jk}$ where $\{a_{jk},b_{jk}\,;\,1\le j,k\le N\}$ are independent normal random variables with variance $\frac{1}{2N}$.  (Such $X_N$ is sometimes referred to as a {\em Ginibre Ensemble} $GinUE_N$.)  The Hermitan cousin $G_N$ to $X_N$ (namely $G_N = \frac12(X_N + X_N^\ast)$) is called a {\em GUE} or {\em Gaussian Unitary Ensemble}, and has been studied by physicists for over half a century.  In that case, the object of interest is the distribution of eigenvalues.  Since $X_N$ is not a normal matrix, however, it's eigenvalues carry less information about the matrix ensemble itself.  Instead one studies the {\em matrix moments} $\frac1N\mathrm{Tr}\,(X_N^{n_1} X_N^{\ast m_1}\cdots X_N^{n_r} X_N^{\ast m_r})$ (these carry the same information as the eigenvalues in the Hermitian-case; in general they contain vastly more data).  The connection between these moments and our interests is summed up in the following proposition, whose proof can be found in \cite{Nica Speicher Book}.

\begin{proposition} \label{prop matrix moments} Let $X_N$ be a random matrix with independent complex  normal entries (real and imaginary parts of variance $\frac{1}{2N}$).  Let $n_1,\ldots,n_r$ and $m_1,\ldots,m_r$ be non-negative integers.  Then the mixed matrix moment
\[ \mathrm{Tr}\left[ (X_N)^{n_1} (X_N^\ast)^{m_1} \cdots (X_N)^{n_r} (X_N^\ast)^{m_r} \right] \]
converges, as $N\to\infty$, to the number of non-crossing pairings in the problem of Knights and Ladies of the Round Table with a seating arrangement of $1^{n_1} 0^{m_1}\cdots 1^{n_r} 0^{m_r}$.
\end{proposition}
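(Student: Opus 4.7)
My plan is to prove the proposition by the classical method of moments, combining the Wick (Isserlis) formula for Gaussian products with a topological accounting of the resulting index sums---essentially the planar-diagram / genus expansion argument. I read the stated trace as implicitly normalized, $\frac{1}{N}\E[\mathrm{Tr}(\cdots)]$, since without the $\frac{1}{N}$ factor the limit does not exist.

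First I would expand the trace into a cyclic index sum. Setting $n = n_1 + m_1 + \cdots + n_r + m_r$,
\[
\mathrm{Tr}\left[(X_N)^{n_1}(X_N^\ast)^{m_1}\cdots(X_N)^{n_r}(X_N^\ast)^{m_r}\right] = \sum_{i_1,\ldots,i_{2n}} \prod_{k=1}^{2n} (Y_k)_{i_k, i_{k+1}},
\]
with cyclic index convention $i_{2n+1}=i_1$, and $Y_k = X_N$ or $X_N^\ast$ according as position $k$ is a $1$ or a $0$ in the bitstring $1^{n_1}0^{m_1}\cdots 1^{n_r}0^{m_r}$. Taking expectations and applying Wick's formula, $\E$ of each monomial becomes a sum over pairings of $\{1,\ldots,2n\}$ of products of pairwise covariances.

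Next I would exploit the specific complex-Gaussian covariances $\E[(X_N)_{ij}(X_N)_{kl}]=0$ and $\E[(X_N)_{ij}(X_N^\ast)_{kl}]=\frac{1}{N}\delta_{il}\delta_{jk}$. Only pairings matching each $1$-position with a $0$-position survive---precisely the $1$-$0$ pairings from the Knights and Ladies problem. Each such pairing $\pi$ imposes Kronecker-delta constraints that collapse the $2n$ indices into equivalence classes; the surviving index sum equals $N^{F(\pi)}$, so $\pi$ contributes $N^{F(\pi)-n}$ to the unnormalized expected trace.

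The heart of the argument is the topological step: $F(\pi) \le n+1$, with equality iff $\pi$ is non-crossing. This can be obtained by viewing the cyclic bitstring as the boundary of a $2n$-gon and interpreting $\pi$ as an edge identification, producing a ribbon graph on an oriented surface. Euler's formula then gives $F(\pi) = n+1 - 2g(\pi)$, where $g(\pi)\ge 0$ is the genus, and $g(\pi)=0$ exactly when the pairing is planar on the disk, i.e. non-crossing. After the $\frac{1}{N}$ normalization each non-crossing pairing contributes $1$ while every crossing pairing contributes $O(N^{-2})$, yielding the stated limit. The main obstacle is precisely this topological identity and the identification of its equality case with non-crossing pairings; the combinatorial alternative---tracking how constraints sequentially merge indices---is less conceptual but more elementary, and either route is carried out in detail in Nica--Speicher.
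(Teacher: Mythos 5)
The paper does not actually prove Proposition~\ref{prop matrix moments}; it cites \cite{Nica Speicher Book} for the argument, and your reconstruction is exactly the Wick-formula/genus-expansion proof found there (you even say so), so the approach is the same and is correct. Two small points: you are right that the displayed statement is missing the normalization $\frac{1}{N}\,\E[\cdot]$ (the paper itself uses $\frac1N\mathrm{Tr}$ in the surrounding discussion, so this is a slip in the proposition as printed), and there is a minor notational inconsistency in your setup---you set $n = n_1 + m_1 + \cdots + n_r + m_r$ but then index $i_1,\ldots,i_{2n}$; you want $n$ to be \emph{half} the length (i.e.\ $n=\sum n_i=\sum m_i$ in the balanced case) so that $\pi$ consists of $n$ Wick pairs and the contribution is $N^{V(\pi)}/N^n=N^{1-2g(\pi)}$ before the final $\frac1N$ normalization.
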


Therefore our goal, in a sense, is to calculate all asymptotic mixed matrix moments of a Ginibre Ensemble.  However, the set of such non-crossing pairings is far more generic than in this one example.  In \cite{Nica Speicher Fields Paper}, the authors introduced {\em $\mathscr{R}$-diagonal operators}, which represent limiting eigenvalue distributions of a large class of non-self-adjoint random matrices with {\em non}-independent entries (but that nevertheless have nice symmetry and invariance properties).  Such ensembles of random matrices have recently played very important roles in free probability and beyond: for example, in \cite{Haagerup}, Haagerup has produced the most significant progress towards the resolution of the Invariant Subspace Conjecture in decades, and his proof is concentrated in the theory of $\mathscr{R}$-diagonal operators.  In \cite{Kemp dilation}, the first author showed that the asymptotic mixed matrix moments of $\mathscr{R}$-diagonal random matrices are controlled, in an appropriate sense, by the set of non-crossing pairings we consider in this paper.  Indeed, the results of the present paper followed from discussions motivated by applications to $\mathscr{R}$-diagonal operators.

\medskip

\subsection{Main Theorems}

As will become apparent, finding a closed formula for the number of non-crossing pairings for
any random seating arrangement seems to be an unfeasible goal.  Given the motivation
above, we wish to study these numbers regardless, so we now set the notation to be used throughout this paper.

\begin{definition} \label{def NC2} A {\em pairing} of the set $\{1,\ldots,2n\}$ is a collection of $n$ pairs $\pi = \{\{i_1,j_1\},\ldots,\{i_n,j_n\}\}$ with the property that $\{i_1,i_2,\ldots,i_n,j_1,j_2,\ldots,j_n\} = \{1,\ldots,2n\}$.  A {\em crossing} of $\pi$ is a pair of pairs $\{i_1,i_2\},\{j_1,j_2\} \in \pi$ such that $i_1<j_1<i_2<j_2$; $\pi$ is called {\em non-crossing} if it has no crossings.  The set of non-crossing pairings of $\{1,\ldots,2n\}$ is denoted $NC_2(2n)$.  \end{definition}
\begin{figure}[htbp]
\begin{center}
\includegraphics{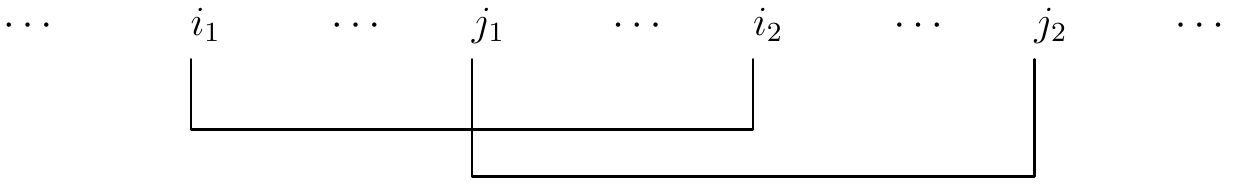}
\caption{A crossing in a partition.}
\label{figure crossing partition}
\end{center}
\end{figure}

\begin{definition} \label{def NC2S} Let $\mx{S}=s_1s_2\cdots s_{2n}$ be a bit string of length $2n$ (so $s_j\in\{0,1\}$ for $1\le j\le 2n$).  A pairing of $\{1,\ldots,2n\}$ is called a {\em non-crossing $\mx{S}$-pairing} if $\pi\in NC_2(2n)$ and if, for each pair $\{i,j\}\in\pi$, $s_i\ne s_j$.  (So $\pi$ pairs $1$s to $0$s.)  The set of non-crossing $\mx{S}$-pairings is denoted $NC_2(\mx{S})$. \end{definition}

This paper concerns the enumeration of $NC_2(\mx{S})$ for arbitrary bit strings $\mx{S}$.  For convenience, we denote for any bit string $\mx{S}$
\begin{equation} \label{eq phi} \phi(\mx{S}) =  |NC_2(\mx{S})|. \end{equation}
It is clear that a string $\mx{S}$ must have even length for $\phi(\mx{S})$ to be non-zero.  What's more, since any $\pi\in NC_2(\mx{S})$ must pair $1$s with $0$s, $NC_2(\mx{S}) = \emptyset$ unless $\mx{S}$ is {\em balanced}: it must have as many $1$s as $0$s.  Any string $\mx{S}$ may be written in the form
\begin{equation} \label{eq string} \mx{S} = 1^{n_1} 0^{m_1} \cdots 1^{n_r} 0^{m_r} \end{equation}
where $n_1$ or $m_r$ may be $0$ but all other exponents $n_j,m_j$ are strictly positive integers.  For reasons that will be made clear in Section \ref{section rotation}, we will only concern ourselves here with strings beginning with $1$ and ending with $0$, and so in (\ref{eq string}) we have all exponents $n_1,\ldots,m_r>0$.  We may therefore think of $\phi$ as a function
\[ \phi\colon \bigsqcup_{r=1}^\infty \N^{2r} \to \N\cup\{0\}, \]
where $\N = \{1,2,\ldots\}$, via the identification $\phi(n_1,m_1,\ldots,n_r,m_r) := \phi(1^{n_1} 0^{m_1}\cdots 1^{n_r} 0^{m_r})$.  We will use $\phi$ in this dual manner throughout in order to ease notational complexity.  It will also be convenient to extend the definition of $\phi$ to an arbitrary number of integer arguments; we set
\begin{align}
\label{equation:phiintegers}
\phi(n_1, m_1, \dots, n_r, m_r) & := 0 \qquad \text{if} \; n_i < 0 \; \text{or} \; m_i < 0 \; \text{for any} \; i,\\
\phi(n_1, m_1, \dots, n_i, 0, n_{i+1}, m_{i+1}, \dots, n_r, m_r) & := \phi(n_1, m_1, \dots, n_i + n_{i+1}, m_{i+1}, \dots, n_r, m_r), \notag \\
\phi(m_0, n_1, m_1, \dots, n_r, m_r) & := \phi(0, m_0, n_1, m_1, \dots, n_r, m_r). \notag
\end{align}

\medskip

The following proposition lists exact values for the enumeration function $\phi$ on
arguments of lengths $2$, $4$, and $6$.  Elementary proofs of Propositions \ref{prop
low-run formulas}.a and \ref{prop low-run formulas}.b are given in Sections
\ref{section rotation} and \ref{sec:recur}, respectively.

\begin{proposition} \label{prop low-run formulas}a) We have $\phi(n, n) =1$, and if $n_1+n_2=m_1+m_2$ then
\begin{equation*} \label{eq low-run 1} \phi(n_1,m_1,n_2,m_2) =
	1+\min\{n_1,m_1,n_2,m_2\}. \end{equation*}
b) For $n_1,n_2,n_3\in\N$, let $i = \min\{n_1,n_2,n_3\}$, and let $j = \min\left(\{n_1,n_2,n_3\}-\{i\}\right)$.  Then
\begin{equation*} \label{eq low-run 2} \phi(n_1,n_1,n_2,n_2,n_3,n_3) =
	\textstyle{\frac{1}{2}i^2 + ij + \frac{3}{2} i + j + 1}. \end{equation*}
 \end{proposition}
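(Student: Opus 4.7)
The plan is to handle (a) by direct parameterization of non-crossing pairings, and (b) by recursion on an auxiliary family, using (a) at the base.

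For (a), the identity $\phi(n, n) = 1$ is immediate by induction: in any non-crossing pairing of $1^n 0^n$, the first $1$ must pair with the last $0$ (else the internal interval is unbalanced), reducing to $\phi(n-1, n-1)$. For $\phi(n_1, m_1, n_2, m_2)$, I would parameterize each non-crossing pairing by $k \ge 0$, the number of pairs with both endpoints in $\{1, \ldots, n_1 + m_1\}$. A non-crossing argument shows these $k$ pairs are forced to be nested and to use the rightmost $k$ ones of the first $1$-block paired with the leftmost $k$ zeros of the first $0$-block, and that the remaining cross-block pairs ($n_1 - k$ first-$1$-to-second-$0$ and $m_1 - k$ first-$0$-to-second-$1$) are then uniquely determined. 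The valid range $\max\{0, m_1 - n_2\} \le k \le \min\{n_1, m_1\}$ has size $1 + \min\{n_1, m_1, n_2, m_2\}$ upon applying the balance condition $n_1 + n_2 = m_1 + m_2$.

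For (b), I would introduce the auxiliary quantity $F_k := \phi(n_1 - k, n_1, n_2, n_2, n_3, n_3 - k)$, so that $F_0$ is the desired count. Factoring each non-crossing pairing by the pair containing position $1$ (which is a $1$ as long as $k \le n_1 - 1$), the balance constraint forces the partner to be the last $0$ of exactly one of the three $0$-blocks. Computing the three sub-contributions via (a), together with the small auxiliary identity
\[
\phi(0^k 1^a 0^a 1^b 0^{b-k}) = 1 + \min(a, b) \qquad (0 \le k \le b),
\]
which follows by the same balance-and-factor argument applied once more, yields the recursion
\[
F_k = \bigl(1 + \min(n_2, n_3)\bigr) + c_k + F_{k+1},
\]
where $c_k = \min(n_1, n_2) - k$ when $k \le n_2 - 1$ and $c_k = 0$ otherwise. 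Iterating this for $k = 0, 1, \ldots, i-1$, where $i := \min\{n_1, n_2, n_3\}$, telescopes to
\[
F_0 = F_i + i\bigl(1 + \min(n_2, n_3) + \min(n_1, n_2)\bigr) - \binom{i}{2}.
\]

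To close the induction, I would evaluate the boundary value $F_i$ by a case split according to which index $\ell \in \{1, 2, 3\}$ attains the minimum $n_\ell = i$. When $\ell = 1$ or $\ell = 3$, the auxiliary identity above (combined with the reflection and $0 \leftrightarrow 1$ invariance of $\phi$ in the case $\ell = 3$) gives $F_i = 1 + j$, where $j$ is the second-smallest of $\{n_1, n_2, n_3\}$. The hard case, which I anticipate to be the main obstacle, is $\ell = 2$: here the first-element factoring must be iterated through a secondary regime where the $c_k$ contribution vanishes (since $k \ge n_2$) and where position $1$ of the reduced string may in fact be a $0$; after $\min(n_1, n_3) - i$ further iterations the problem reduces to a manageable two-block enumeration, yielding $F_i = (j - i + 1)(i + 1)$. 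Substituting either expression for $F_i$ into the telescoped identity and simplifying produces the claimed formula $\tfrac{1}{2} i^2 + ij + \tfrac{3}{2} i + j + 1$ in all three cases.
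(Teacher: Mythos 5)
Your argument is correct in both parts, but it takes a genuinely different route from the paper's. For part~(a), the paper rotates so that $n_1$ is minimal, observes via lattice-path heights (Lemma~\ref{lemma:pairing height}) that the bits with unique labels must pair locally, and reduces the count to $\phi\left((1^i 0^i)^2\right) = 1+i$, which it quotes from Proposition~\ref{prop Fuss-Catalan}. Your direct parameterization by the number $k$ of internal first-block pairs is self-contained, counts the valid $k$ over an interval of length $1+\min\{n_1,m_1,n_2,m_2\}$, and avoids the appeal to the Fuss--Catalan result entirely. For part~(b), the paper first applies peak reduction (Corollary~\ref{corollary:peak reduction}) to reduce to $\phi(i,i,j,j,j,j)$, then uses Theorem~\ref{theorem recurrence}, rotates and negates the remainder, and applies the recurrence once more, obtaining the uniform two-parameter recursion $\phi_{i,j} = 1 + j + 2i + \phi_{i-1,j-1}$. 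You instead keep the original arguments, introduce the auxiliary $F_k = \phi(n_1-k,n_1,n_2,n_2,n_3,n_3-k)$, and derive a one-step recursion from a single application of the recurrence; this trades the peak-reduction preprocessing for a three-way case split at the boundary value $F_i$. I verified your recursion and your boundary values $F_i = 1+j$ for $\ell\in\{1,3\}$ and $F_i = (j-i+1)(i+1)$ for $\ell = 2$ (the latter by continuing through the regime $i\le k\le j-1$, where $F_k = (1+i)+F_{k+1}$, down to $F_j=1+i$), and in all three cases the telescoped sum simplifies to the claimed polynomial. One small inaccuracy worth correcting in your writeup: the partner of position~$1$ is the last $0$ of a $0$-block only when $k=0$; for general $k$ it is the $(n_l-k)$-th $0$ of block $l<3$, the last $0$ only for $l=3$ (compare the precise statement of Theorem~\ref{theorem recurrence}), although this does not affect the three sub-contributions you compute.
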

The increasing complexity of the formulas in Proposition \ref{prop low-run formulas}
make it seem that a general formula, even if it could be written down concisely, would have little use.  Indeed, it is possible to write down an explicit formula for $\phi(n_1,m_2,n_2,m_2,n_3,m_3)$ in general, rather than the symmetric case $n_j=m_j$ given above, but the formula takes more than two full lines.  In certain special cases, however, an explicit formula can be written down.

\begin{proposition} \label{prop Fuss-Catalan} The values of $\phi$ on regular strings $(1^n 0^n)^r$ (corresponding to the case $n_1 = \cdots = n_r = m_1 = \cdots = m_r$) is given by
\begin{equation} \label{eq Fuss-Catalan} \phi\left((1^n 0^n)^r\right) = C^{(n)}_r = \frac{1}{nr+1}\binom{(n+1)r}{r}. \end{equation}
\end{proposition}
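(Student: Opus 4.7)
The plan is to establish the Fuss-Catalan functional equation
\[
A(x) \;=\; 1 + x\,A(x)^{n+1}, \qquad A(x) := \sum_{r\ge 0}\phi\bigl((1^n0^n)^r\bigr)\,x^r
\]
(with the convention $\phi(\emptyset)=1$), via a single combinatorial decomposition of $NC_2((1^n0^n)^r)$. The unique solution in $1+x\Z[[x]]$ is the Fuss-Catalan series $\sum_{r} C^{(n)}_r x^r$ (a standard Lagrange-inversion computation), so it is enough to prove the recurrence
\[
\phi\bigl((1^n0^n)^r\bigr) \;=\; \sum_{\substack{r_0+r_1+\cdots+r_n=r-1\\ r_i\ge 0}}\;\prod_{i=0}^n \phi\bigl((1^n0^n)^{r_i}\bigr)
\]
for every $r\ge 1$.

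Fix $\pi\in NC_2((1^n0^n)^r)$ and study the partners $p_1,\ldots,p_n$ of positions $1,\ldots,n$, i.e.\ of the first $1$-block. Non-crossing, together with the fact that positions $1,\ldots,n$ precede every zero, forces $p_1>p_2>\cdots>p_n$: the pairs $\{i,p_i\}$ are strictly nested. A simple balance count--the interval $[i+1,p_i-1]$ is paired internally by $\pi$ and so contains equal numbers of $1$'s and $0$'s--then pins down $p_i$ as the $(n-i+1)$-st zero of some $0$-block, whose index I denote $j_i\in\{1,\ldots,r\}$. The nesting $p_1>\cdots>p_n$ translates to $r\ge j_1\ge j_2\ge\cdots\ge j_n\ge 1$.

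These $n$ nested pairs partition $\{1,\ldots,2rn\}$ into $n+1$ sub-intervals that are each paired internally by $\pi$. A direct reading of the block structure identifies them: the outermost $[p_1+1,2rn]$ is literally $(1^n0^n)^{r-j_1}$; the innermost $[n+1,p_n-1]$ is $(0^n1^n)^{j_n-1}$; and each intermediate $[p_{i+1}+1,p_i-1]$ (for $1\le i\le n-1$) equals $0^i(1^n0^n)^{j_i-j_{i+1}-1}1^n0^{n-i}$, empty when $j_i=j_{i+1}$. The latter two are cyclic rotations of $(1^n0^n)^{j_n-1}$ and $(1^n0^n)^{j_i-j_{i+1}}$, respectively. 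Because non-crossing pairings of a bit string are intrinsically pairings of chords on a circle--and hence invariant under cyclic rotation of the underlying string--the sub-interval indexed by $i$ contributes exactly $\phi((1^n0^n)^{r_i})$ sub-pairings, where $(r_0,r_1,\ldots,r_n):=(r-j_1,\,j_1-j_2,\,\ldots,\,j_{n-1}-j_n,\,j_n-1)$. These are non-negative integers summing to $r-1$, any such tuple is realised by a unique choice of $j_1\ge\cdots\ge j_n$, and the map $\pi\longmapsto(r_0,\ldots,r_n;\,\text{sub-pairings})$ is bijective, giving the claimed recurrence.

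The main obstacle is the structural identification of the $n+1$ sub-intervals together with the justification that non-crossing is preserved by cyclic rotation: the intermediate intervals $0^i(1^n0^n)^{m-1}1^n0^{n-i}$ are not literally of the form $(1^n0^n)^m$, so this rotation step is essential. Both points are cleanest after replacing the linear bit string with its cyclic picture, on which ``non-crossing'' (no two chords cross inside the disk) is manifestly rotation-invariant and the sub-interval shapes become transparent; the balance count determining each $p_i$ is then routine but demands careful bookkeeping with block indices.
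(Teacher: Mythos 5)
Your proof is correct, and it is essentially the approach the paper itself alludes to (``a more sophisticated version of this reasoning, together with the recurrence for the Fuss-Catalan numbers'') and defers to \cite{REU}: you establish that $\phi((1^n0^n)^r)$ satisfies the Fuss-Catalan functional equation $A = 1 + xA^{n+1}$ by decomposing a pairing according to the partners of the entire first $1$-block. The nesting $p_1 > \cdots > p_n$, the height/balance argument pinning each $p_i$ to the $(n-i+1)$-st zero of some block $j_i$, and the identification of the $n+1$ complementary intervals as cyclic rotations of strings $(1^n0^n)^{r_i}$ are all sound; the appeal to rotational invariance (Proposition \ref{prop rotation}) is exactly what makes the intermediate intervals $0^i(1^n0^n)^{m-1}1^n0^{n-i}$ contribute $\phi((1^n0^n)^m)$. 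The paper does not spell out this argument itself; your write-up supplies it completely, and is more direct than iterating the one-block-at-a-time recurrence (Theorem \ref{theorem recurrence}) that the paper develops for the general non-symmetric case.
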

The numbers appearing in (\ref{eq Fuss-Catalan}) are called {\em Fuss-Catalan numbers}.  In the special case $n=1$ (corresponding to the alternating string $1010\cdots 10$), they yield the Catalan numbers $C_r = \frac{1}{r+1}\binom{2r}{r}$.  These enumerate {\em all} non-crossing pairings $NC_2(2r)$; indeed, any non-crossing pairing is automatically a $1010\cdots10$-pairing, for if $i,j$ are both even or both odd and $(i,j)\in\pi$ then, since there are an {\em odd} number of points between $i$ and $j$, at least one pair in $\pi$ must have an end between $i$ and $j$ and the other outside, producing a crossing in $\pi$.  A more sophisticated version of this reasoning, together with the recurrence for the Fuss-Catalan numbers, forms a proof of Proposition \ref{prop Fuss-Catalan} as discussed in \cite{REU}.  The proposition was also proved in a more topological manner by the first author in \cite{Kemp Speicher}, which relies on the non-crossing partition multichain enumeration results in \cite{Edelman} (proofs are also essentially contained in \cite{Larsen} and \cite{Oravecz}.)  We will rely upon Proposition \ref{prop Fuss-Catalan} in much of the remainder of the introduction; indeed, the technology we develop here (and continue in  forthcoming papers) is all in the general scheme of {\em Fuss-Catalan structures}.

\medskip

The parameter $r$ plays an important role in all that follows, so we give it a name.

\begin{definition} Given a string $\mx{S} = 1^{n_1} 0^{m_1}\cdots 1^{n_r} 0^{m_r}$ with $n_j,m_j>0$, say that $\mx{S}$ has {\em $r$ runs}. \end{definition}
The strings that appear in Proposition \ref{prop Fuss-Catalan} are the most symmetric strings with a fixed number of runs.  The authors' general intuition, after many calculations and numerical experiments, was that these highly symmetric words should maximize $\phi$ among all strings with given length and number of runs.  This intuition is borne out in the results of Proposition \ref{prop low-run formulas}.  This simple-to-state conjecture turns out to be remarkably intricate, and is indeed our main theorem.

\begin{theorem} \label{Main Theorem} Let $\{n_1,\ldots,n_r\}$ and $\{m_1,\ldots,m_r\}$ be sets of positive integers with common sum $n$.  Then
\begin{equation} \label{eq main theorem} \phi(1^{n_1} 0^{m_1}\cdots 1^{n_r} 0^{m_r}) \le \phi((1^k 0^k)^r), \end{equation}
where $k = \lceil n/r \rceil$.  In particular, in the case that $r$ divides $n$, the string $(1^k 0^k)^r$ maximizes $\phi$ among all strings with length $2n$ and $r$ runs. \end{theorem}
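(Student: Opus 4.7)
The plan is to proceed by induction on the number of runs $r$, using a $(k+1)$-fold decomposition of non-crossing pairings that mirrors the defining recursion of the Fuss-Catalan numbers. The base case $r=1$ is immediate: $\mx{S} = 1^n 0^n$ admits the unique nested non-crossing pairing, matching $C^{(n)}_1 = 1$.

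For the inductive step, fix $k = \lceil n/r \rceil$ and $\pi \in NC_2(\mx{S})$. I would follow the arcs of $\pi$ emanating from the first $k$ ones of $\mx{S}$, reading left to right; if $n_1 \ge k$ these lie at positions $1,2,\ldots,k$ inside the first $1$-run, and otherwise one iteratively extracts further such arcs from the innermost gap as needed. Non-crossingness forces this family of $k$ arcs to be strictly nested, with partner $0$s at decreasing positions $q_1 > q_2 > \cdots > q_k$, and they carve the remaining positions of $\mx{S}$ into $k+1$ gap intervals, each a balanced substring that must be non-crossing-paired independently. Labeling the gaps $\mx{S}^{(0)},\ldots,\mx{S}^{(k)}$ (and invoking the extended definition (\ref{equation:phiintegers}) to absorb any split boundary runs into adjacent ones), this produces the identity
\[
\phi(\mx{S}) = \sum \phi(\mx{S}^{(0)})\phi(\mx{S}^{(1)}) \cdots \phi(\mx{S}^{(k)}),
\]
where the sum ranges over valid choices of partners $q_1 > \cdots > q_k$ and the resulting run-counts satisfy $r_0 + r_1 + \cdots + r_k \le r - 1$. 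Under the inductive hypothesis, each $\phi(\mx{S}^{(j)}) \le C^{(k)}_{r_j}$, and the right-hand side is bounded term-by-term by the Fuss-Catalan recursion $C^{(k)}_r = \sum_{r_0 + \cdots + r_k = r-1} \prod_{j=0}^{k} C^{(k)}_{r_j}$, closing the induction.

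The hard part is ensuring that the inductive hypothesis can be invoked with the ambient ceiling $k$, even when a sub-piece $\mx{S}^{(j)}$ has local ceiling $\lceil n_j / r_j \rceil > k$ (which can occur if a gap inherits an unusually long run from $\mx{S}$). One way around this is to strengthen the inductive statement so that $\phi$ is bounded by $C^{(k')}_{r'}$ for any pair $(k',r')$ with $k' r' \ge n'$; monotonicity of $C^{(k)}_r$ in $k$ then lets one replace a local $k_j$ by the ambient $k$ whenever $k_j \le k$, and the overloaded case must be handled separately (for example by merging adjacent sub-pieces or by a direct argument pooling the excess characters). A secondary technicality is setting up the $k$-arc extraction cleanly when $n_1 < k$, which requires an iterative descent into the innermost gap with careful tracking of which boundary runs get fused at each step. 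Once these two bookkeeping points are handled, the inductive comparison with the Fuss-Catalan recursion is term-by-term, yielding the bound; and equality in (\ref{eq main theorem}) when $r \mid n$ follows immediately from Proposition \ref{prop Fuss-Catalan}.
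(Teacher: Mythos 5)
Your approach is genuinely different from the paper's: the paper first injects non-crossing pairings into labeled plane trees (Theorem \ref{theorem:WLT}), passes to Catalan words via a bijection (Theorem \ref{theorem:CFLT}), and then applies a dominance argument together with cyclic rotation of $\phi$ (Lemma \ref{lemma:CFnCFn'}, Theorem \ref{theorem:phiCF}, Corollary \ref{corollary:phiCnr}). Your plan --- a direct induction on $r$ built around a $(k+1)$-fold arc decomposition matched term by term against the multiplicative Fuss-Catalan recursion $C^{(k)}_r=\sum_{r_0+\cdots+r_k=r-1}\prod_j C^{(k)}_{r_j}$ --- would be an attractive and more elementary alternative, extending the argument of \cite{REU} for the regular string $(1^k 0^k)^r$ in Proposition~\ref{prop Fuss-Catalan} to arbitrary strings. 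Unfortunately, the obstructions you flag are fatal rather than technical, and the work-arounds you sketch do not survive scrutiny.

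Concretely: (i) the ``iterative descent'' for $n_1<k$ is never specified, and the claimed strict nesting of the $k$ arcs can fail --- if the first $1$ pairs to the immediately adjacent $0$, the innermost gap is empty and the only $1$s available for the next extraction lie \emph{outside} the first arc, giving side-by-side rather than nested arcs (and after rotating $n_1$ to be minimal, $n_1\le n/r\le k$ always holds, so this is the generic case); (ii) when $n_1>k$ the surviving tail $1^{n_1-k}$ contributes an extra run to the innermost gap, so the gap run counts sum to $r$ rather than $r-1$ and the term no longer matches the Fuss-Catalan recursion; and (iii) the proposed strengthened inductive hypothesis, that $\phi(\mx{S}')\le C^{(k')}_{r'}$ whenever $k'r'\ge n'$, is simply false: for $\mx{S}'=(10)^{10}$ one has $\phi(\mx{S}')=C_{10}=16796$, yet $(k',r')=(10,1)$ satisfies $k'r'\ge n'$ while $C^{(10)}_1=1$. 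Adding the missing constraint $r'\ge(\text{number of runs})$ repairs the statement, but then it fails to cover precisely the ``overloaded'' gaps with $k\,r_j<n_j$, which you acknowledge must be handled separately but do not handle. This last point is not a loose end; the rigidity of the global rotational symmetry in Corollary~\ref{corollary:phisymmetry} is exactly what makes a direct inductive comparison hard, and it is what the paper's passage to labeled trees --- whose count, by Proposition~\ref{proposition:LTformula}, depends only on the multiset $\{n_i\}$ and not on their cyclic order --- was designed to sidestep.
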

\noindent The latter statement, describing the theorem in terms of maximizers of $\phi$, has a slightly stronger form that remains conjectural in the general case that the number of runs does not divide half the length.

\begin{conjecture} \label{conjecture} Let $n,r\in\N$ with $r\le n$.  Write $n = \ell r + a$ with $0\le a<r$.  Then for any bit string $\mx{S}$ with length $2n$ and $r$ runs,
\[ \phi(\mx{S}) \le \phi((1^{\ell+1} 0^{\ell+1})^a (1^\ell 0^\ell)^{r-a}). \]
\end{conjecture}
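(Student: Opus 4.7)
The plan is to extend the induction strategy that underlies the Main Theorem (Theorem~\ref{Main Theorem}), but with the finer bookkeeping required to identify not just an upper bound but the exact extremal string $\mx{S}^\ast := (1^{\ell+1} 0^{\ell+1})^a (1^\ell 0^\ell)^{r-a}$. The first preliminary step is to obtain a workable expression for $M := \phi(\mx{S}^\ast)$; when $a=0$, $M$ reduces to the Fuss-Catalan number $C^{(\ell)}_r$ by Proposition~\ref{prop Fuss-Catalan}, but for $a>0$ the string is only nearly regular, so $M$ will presumably be a sum of Fuss-Catalan products indexed by how the $a$ long blocks interact with the recursive decompositions. Pinning down the recursion $M$ satisfies — likely by conditioning on the $0$-partner of a distinguished symbol and exploiting the cyclic symmetry discussed in Section~\ref{section rotation} — is the first obstacle.

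Once $M$ is in hand, I would attempt a direct induction on $n$, carrying the full conjectural bound in the hypothesis. For $\mx{S} = 1^{n_1} 0^{m_1}\cdots 1^{n_r} 0^{m_r}$, expanding $\phi(\mx{S})$ on the $0$-partner of a chosen $1$ yields a recurrence of the form $\phi(\mx{S}) = \sum \phi(\mx{S}_{\text{in}})\,\phi(\mx{S}_{\text{out}})$; applying the inductive hypothesis to each factor produces an upper bound as a sum of products of the associated $M$-values, which must then be compared term-by-term with the analogous decomposition of $M$ itself. Unlike in the Main Theorem, where the ceiling $\lceil n/r\rceil$ provided slack that absorbed lossy accounting across the sub-strings, the conjecture demands a tight inequality, so the combinatorial types of $\mx{S}_{\text{in}}$ and $\mx{S}_{\text{out}}$ must be tracked exactly and matched against the expansion of $M$.

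The central and hardest obstacle is a \emph{rebalancing monotonicity}: one would like a lemma stating that if $\mx{S}'$ is obtained from $\mx{S}$ by transferring a single unit of length from a strictly longer run to a strictly shorter run of the same symbol type, then $\phi(\mx{S}) \le \phi(\mx{S}')$. Such a lemma would imply the conjecture by iteratively rebalancing any run-length vector into the near-uniform shape $(\ell+1,\ldots,\ell+1,\ell,\ldots,\ell)$. My attempt would be to construct an explicit injection $NC_2(\mx{S}) \hookrightarrow NC_2(\mx{S}')$ built from local surgery on pairings near the affected runs, using the rotational invariance of Section~\ref{section rotation} to reduce to the case that the two runs are adjacent. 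Designing an injection that remains well-defined and non-crossing across every possible configuration of arcs straddling the two runs — especially the arcs that nest deeply inside one run and land in the other — is the real combinatorial difficulty, and is almost certainly the reason the statement is left as a conjecture rather than proved in the present paper.
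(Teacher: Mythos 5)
This statement is explicitly labeled a \emph{conjecture} in the paper and is not proved there; the authors only establish it when the run-length sequence $n_1\le\cdots\le n_r$ (or $m_1\le\cdots\le m_r$) is already weakly increasing, and they explain after stating the conjecture that the general case remains open (see the discussion surrounding Corollary~\ref{corollary:phi*ineq} and Theorem~\ref{theorem:phiCF}). So there is no paper proof to compare your argument against; what can be said is whether your plan would close the gap the authors left.

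It would not, and your closing paragraph already diagnoses why. The step you isolate as ``the central and hardest obstacle'' --- a rebalancing lemma realized by an explicit injection $NC_2(\mx{S})\hookrightarrow NC_2(\mx{S}')$ after transferring a unit of length from a longer run to a shorter one --- is precisely the point the paper cannot get past. The paper's own rebalancing statement, Corollary~\ref{corollary:phi*ineq}, is proved only after two reductions (Theorem~\ref{theorem:phiphi*} to the symmetrized $\phi^*$, then Theorem~\ref{theorem:WLT}/Theorem~\ref{theorem:CFLT} to Catalan words $\CF(\n)$), and it requires the argument sequence to stay weakly increasing at each step; the remark immediately after it records that no direct pairing-to-pairing injection is known even in that restricted setting. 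The deeper difficulty, which your outline does not address, is that once you leave the sorted regime the chain of inequalities breaks at the rotation step: Theorem~\ref{theorem:phiCF} bounds $\phi(\n,\m)$ by $|\CF(\Rot_i(\n'))|$ for some rotation $i$ forced by cyclic domination, and for a non-constant target $\n'=(\ell+1,\ldots,\ell+1,\ell,\ldots,\ell)$ that rotation need not produce a weakly increasing sequence; when it does not, $|\CF(\Rot_i(\n'))|$ can exceed $\phi^*(\n')$, so the bound overshoots the conjectured extremal value. The Main Theorem avoids this by taking the constant (rotation-invariant) $\n'=(k,\ldots,k)$, which is exactly why $\lceil n/r\rceil$ appears there instead of the finer $(\ell,a)$ decomposition. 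Your plan to ``pre-rotate to make the runs adjacent'' and do local surgery does not engage with this rotation obstruction, and there is no indication the injection you sketch would survive arcs that straddle the two modified runs, as you yourself note. In short: the proposal is an honest and reasonable outline, it correctly pinpoints the missing lemma, but it does not supply it, and your own assessment that the gap is likely why the statement remains a conjecture is accurate.
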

\noindent The string $(1^{\ell+1} 0^{\ell+1})^a (1^\ell 0^\ell)^{r-a}$ is one interpretation of the ``most symmetric string with length $2n$ and $r$ runs''.  If $a>0$ then $k= \lceil n/r \rceil = \ell+1$, and so the string $(1^k 0^k)^r$ appearing on the right-hand-side of (\ref{eq main theorem}) is $(1^{\ell+1} 0^{\ell+1})^r$, which is slightly longer than we believe to be necessary.  Let us note that we can prove Conjecture \ref{conjecture} in the case that $n_1\le n_2 \le \cdots \le n_r$ or $m_1\le m_2\le \cdots \le m_r$, and also in somewhat greater generality; this is discussed in Sections \ref{section tree ordering} and \ref{section Proof}.

\subsection{Outline} This paper is organized as follows.  Section \ref{section:Basic} describes some of the basic properties of noncrossing pairings, including certain symmetries, a recurrence relation, and elementary bounds for $\phi$.  Section \ref{section:phi*} then reduces the general problem of counting noncrossing pairings (thereby bounding $\phi$) to the simpler problem of counting pairings for a restricted class of ``locally symmetric'' bitstrings.  We obtain exact formulas for the restricted situation, although we are left short of our main goal of proving Theorem \ref{Main Theorem}; and the section concludes with several conjectural observations that would complete the proof and are also of independent interest.  We take a different approach to the problem in Section \ref{section:MainProof}, where we injectively map noncrossing pairings into labeled trees (which are ``generalized Catalan structures''), and use the combinatorics of these trees to finally prove Theorem \ref{Main Theorem}.  Finally, Section \ref{section:Conclusion} ends the paper with some concluding remarks about the connections between noncrossing pairings and other common combinatorial objects.

\bigskip

\section{Basic Properties of Non-Crossing Pairings on Bit Strings}
\label{section:Basic}

\subsection{Symmetries and Rotational Invariance} \label{section rotation}

The first elementary observation about non-crossing pairings on bit strings is that they display
both cyclic and reflective symmetry.  More precisely, the lattice of all non-crossing pairings on bitstrings of length $2n$ is invariant under cyclic permutations of $\{1,\ldots,2n\}$, as well as under the reflection permutation.  Indeed, these permutations generate the lattice automorphism group of the full lattice of non-crossing {\em partitions} (cf.\ \cite{Nica Speicher Book}).  Given a pairing in $NC_2(\mx{S})$ for some string $\mx{S}$ of length $2n$, such a rotation or reflection of the underlying ordered set $\{1,\ldots,2n\}$ typically does not respect the string $\mx{S}$.  However, if the string is similarly permuted, then the correspondence is clear; it also makes no difference in the pairings if we interchange 1s and 0s.

\begin{definition}
If $\mx{S} = s_1 s_2 \cdots s_{2n}$, the {\em reflection} of $\mx{S}$ is $\mathrm{Refl}(\mx{S}) := s_{2n} \cdots s_2 s_1$, for $1 \leq k \leq 2n,$ the {\em rotation} of $\mx{S}$ by $k$ is $\mathrm{Rot}_k(\mx{S}) := s_k s_{k+1} \cdots s_{2n} s_1 s_2 \cdots s_{k-1}.$  The {\em negation} of $\mx{S}$ replaces each $s_i$ by $1 - s_i$, and is denoted by $1 - \mx{S}$.
\end{definition}

\begin{proposition} \label{prop rotation} For any $\mx{S}$ as above and any integer $k$, $\phi(\mx{S}) = \phi(\Rot_k(\mx{S})) = \phi\left(\mathrm{Refl}_k(\mx{S})\right) = \phi(1-\mx{S})$.
\end{proposition}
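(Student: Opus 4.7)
My plan is to prove each of the three identities by exhibiting an explicit bijection between $NC_2(\mx{S})$ and the other set. The identity $\phi(\mx{S}) = \phi(1-\mx{S})$ is essentially immediate: since $(1-s_i) \ne (1-s_j)$ if and only if $s_i \ne s_j$, the identity map on partitions of $\{1,\ldots,2n\}$ is itself a bijection $NC_2(\mx{S}) \to NC_2(1-\mx{S})$ — negating the bit string neither creates nor destroys any crossings or mismatches, since the crossing condition is purely about the index structure and the mismatch condition is symmetric in $0$ and $1$.

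For $\phi(\mx{S}) = \phi(\mathrm{Refl}(\mx{S}))$, I would define $\tau \colon NC_2(\mx{S}) \to NC_2(\mathrm{Refl}(\mx{S}))$ by $\tau(\pi) = \{\{2n+1-i, 2n+1-j\} : \{i,j\} \in \pi\}$. Because position $p$ of $\mathrm{Refl}(\mx{S})$ carries the symbol $s_{2n+1-p}$, the pair $\{2n+1-i,2n+1-j\}$ in $\tau(\pi)$ pairs the same two original symbols $s_i, s_j$, preserving the mismatch condition. To see non-crossings go to non-crossings, note that a crossing $i_1<j_1<i_2<j_2$ (with pairs $\{i_1,i_2\}, \{j_1,j_2\}$) reflects to indices with the reversed linear order $2n+1-j_2<2n+1-i_2<2n+1-j_1<2n+1-i_1$; reading the reflected pairs $\{2n+1-i_1,2n+1-i_2\}$ and $\{2n+1-j_1,2n+1-j_2\}$ off in the new increasing order reveals the same interleaving pattern, so $\tau$ sends crossings to crossings and non-crossings to non-crossings, making it a bijection of the desired sets.

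The rotational identity $\phi(\mx{S}) = \phi(\Rot_k(\mx{S}))$ is the most delicate case. I would define a cyclic shift $\sigma_k$ on $\{1,\ldots,2n\}$ chosen so that the symbol $s_i$ at position $i$ of $\mx{S}$ appears at position $\sigma_k(i)$ of $\Rot_k(\mx{S})$, and transport pairings by $\sigma_k(\pi) = \{\{\sigma_k(i), \sigma_k(j)\} : \{i,j\} \in \pi\}$. The bit-mismatch condition is preserved by construction. The substantive content lies in showing that $\sigma_k$ preserves the non-crossing property. The key observation is that the linear condition $i_1<j_1<i_2<j_2$ on pairs $\{i_1,i_2\},\{j_1,j_2\}$ is equivalent to saying that the four indices alternate in the cyclic order on $\{1,\ldots,2n\}$ — a property that is manifestly invariant under any cyclic relabeling.

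The main obstacle is precisely this last equivalence: reconciling the paper's linear definition of non-crossing (Definition \ref{def NC2}) with its invariance under cyclic shifts. The cleanest way I see is to fix two pairs in $\sigma_k(\pi)$ and enumerate the four possible cyclic positions in which the pre-image quadruple $i_1, j_1, i_2, j_2$ could start after the shift; in each case a short arithmetic check shows that the interleaving pattern present in $\pi$ persists in $\sigma_k(\pi)$. Equivalently, since a general rotation is a composition of shifts by one, one may reduce to the case $k=1$ and argue inductively, tracking how the pair of $\pi$ containing the index $1$ gets repositioned to contain the index $2n$ in $\sigma_1(\pi)$ and checking that no crossing is introduced. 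Either approach completes the proof.
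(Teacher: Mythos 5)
Your proof is correct, and it supplies details that the paper itself chooses to omit: the paper gives no written argument for Proposition~\ref{prop rotation}, relying instead on a reference to Figure~\ref{figure rotation} (the picture of $\Rot_k$ acting on the circular diagram). Your three bijections are all right, and your central observation for the rotation case --- that the linear crossing condition $i_1<j_1<i_2<j_2$ (with pairs $\{i_1,i_2\}$ and $\{j_1,j_2\}$) is equivalent to the cyclic alternation of those four points, which is manifestly preserved under cyclic relabeling --- is exactly the content that the figure is meant to convey. In other words you have taken essentially the same approach as the paper but made it rigorous; the approach matches what the paper intends, you've just written down the argument that the authors treated as clear from the picture.

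One small note: the quantity $\sigma_k$ you introduce should be written out explicitly as the cyclic shift $\sigma_k(i) \equiv i - k + 1 \pmod{2n}$ (taking representatives in $\{1,\dots,2n\}$), since this is what makes $\Rot_k(\mx{S})_{\sigma_k(i)} = s_i$. Once that is in place, the statement that cyclic alternation of the four indices is preserved under $\sigma_k$ is immediate, and there is no need for the case enumeration or the induction on $k$ that you sketch as fallbacks --- either would work but neither is needed. (Also, the $\mathrm{Refl}_k$ in the proposition's statement is a typo for $\mathrm{Refl}$, since no $k$-indexed reflection is defined; you read it correctly.)
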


We omit the proof of this proposition, but refer the reader to Figure \ref{figure
rotation}.
\begin{figure}[htbp]
	\begin{center}
		\includegraphics{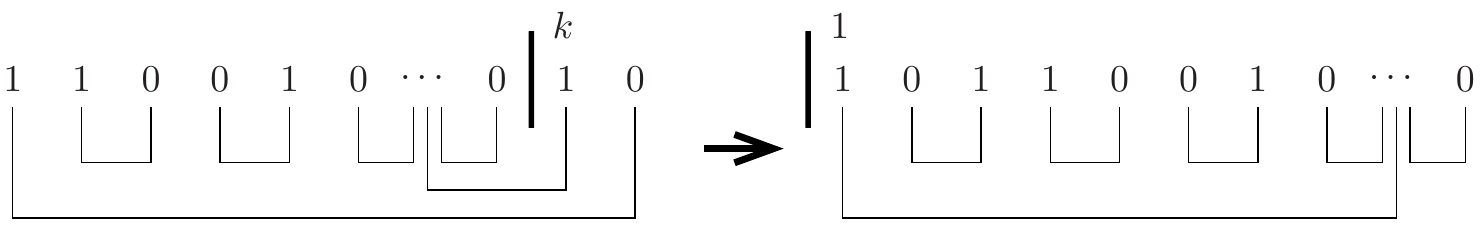}
		\caption{\label{figure rotation}The action of the rotation $\Rot_k$.}
	\end{center}
\end{figure}

While Proposition \ref{prop rotation} makes it clear that it is
	natural to draw non-crossing pairings of bit strings around a circle as in
	the motivating Figure \ref{fig Knights and Ladies}, it is often convenient
	to use the linear representation and we continue to do so while keeping Proposition
	\ref{prop rotation} in mind.
Rotations provide a key tool in the analysis that follows in Sections
\ref{section:phi*} and \ref{section:MainProof}.  Proposition \ref{prop rotation} will
be used to prove a recurrence relation for $\phi$ in Section \ref{sec:recur}, but we will also need the following useful symmetries of $\phi$ written as an arithmetic function.
\begin{corollary}
\label{corollary:phisymmetry} For any integers $n_i, m_i,$
\begin{align*}
\phi(n_1,m_1,\ldots,n_r,m_r) & = \phi(m_1,n_2,\ldots,m_r,n_1) \qquad \mathrm{(Rotation)}, \\
\phi(n_1,m_1,\ldots,n_r,m_r) & = \phi(m_r,n_r,\ldots,m_1,n_1) \qquad \mathrm{(Reflection)}.
\end{align*}
\end{corollary}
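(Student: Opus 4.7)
The plan is to derive both identities as direct consequences of Proposition \ref{prop rotation}, which already gives invariance of $\phi$ under rotation, reflection, and negation of the underlying bit string. The only work is to translate each operation at the level of bit strings back into the arithmetic-function notation $\phi(n_1, m_1, \ldots, n_r, m_r)$, which has the convention that the associated string begins with $1$ and ends with $0$.

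For the rotation identity, I start with $\mx{S} = 1^{n_1} 0^{m_1} 1^{n_2} 0^{m_2} \cdots 1^{n_r} 0^{m_r}$ and apply $\Rot_{n_1+1}$, which moves the initial block $1^{n_1}$ to the end, producing
\[
0^{m_1} 1^{n_2} 0^{m_2} \cdots 1^{n_r} 0^{m_r} 1^{n_1}.
\]
This string is rotationally equivalent to $\mx{S}$, so by Proposition \ref{prop rotation} it has the same $\phi$-count, but it begins with $0$ rather than $1$. I then apply the negation $\mx{S}\mapsto 1-\mx{S}$, which also preserves $\phi$, to obtain
\[
1^{m_1} 0^{n_2} 1^{m_2} 0^{n_3} \cdots 1^{m_r} 0^{n_1},
\]
and this corresponds precisely to $\phi(m_1, n_2, m_2, n_3, \ldots, m_r, n_1)$.

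For the reflection identity, I apply $\mathrm{Refl}$ to $\mx{S}$, obtaining
\[
0^{m_r} 1^{n_r} 0^{m_{r-1}} 1^{n_{r-1}} \cdots 0^{m_1} 1^{n_1},
\]
then negate to produce $1^{m_r} 0^{n_r} 1^{m_{r-1}} 0^{n_{r-1}} \cdots 1^{m_1} 0^{n_1}$, which is $\phi(m_r, n_r, m_{r-1}, n_{r-1}, \ldots, m_1, n_1)$. Both operations preserve $\phi$ by Proposition \ref{prop rotation}, so the identity follows.

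The only real obstacle is purely notational, namely keeping the indices straight and confirming that rotation-then-negation (respectively reflection-then-negation) lands us back in the canonical form where strings begin with $1$ and end with $0$, and interpreting the resulting run-lengths as arguments of $\phi$. No new combinatorial input beyond Proposition \ref{prop rotation} is needed.
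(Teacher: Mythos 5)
Your proposal is correct and supplies exactly the argument the paper leaves implicit: the corollary is stated without proof as an immediate consequence of Proposition~\ref{prop rotation}, and your derivation---rotate by $n_1+1$ (resp.\ reflect) and then negate to land back in the canonical form $1^{a_1}0^{b_1}\cdots 1^{a_r}0^{b_r}$---is the natural way to translate those string operations into the arithmetic notation. The index bookkeeping checks out in both cases.
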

In particular, we may assume without loss of generality that $n_1 = \min\{n_1,m_1,\ldots,n_r,m_r\}$; in this case, we call $n_1$ {\em minimal}.  We will make this minimum-first assumption frequently in what follows.

\subsection{The Lattice Path Representation of a Bit String} \label{section preliminaries}

Let $\mx{S}$ be a balanced bit string, beginning with $1$ and ending in $0$: $\mx{S} = s_1s_2\cdots s_{2n} = 1^{n_1}0^{m_1}\cdots 1^{n_r}0^{m_r}$ where $n_1+\cdots+n_r = m_1+\cdots+m_r = n$ and $n_j,m_j>0$ for $1\le j\le r$.  To reiterate, we say the {\em length} of $\mx{S}$ is $2n$, and we say $\mx{S}$ has $r$ {\em runs}.  We frequently make use of a convenient representation of $\mx{S}$ as a lattice path.

\begin{definition} \label{def lattice path} Given a string $\mx{S}=s_1s_2\cdots
	s_{2n}$, set $Y_0 = 0,$ and $Y_i := \sum_{j=1}^i (-1)^{s_i + 1}$ for $1 \leq
	i \leq 2n$.  Set $p_i = (i,Y_i)\in\Te^2$.  Define $\PP(\mx{S})$ in $\Te^2$ to be the piecewise linear path consisting of the union of the $2n$ line segments $p_{i-1}p_i$ for $1 \le i \le 2n$  (i.e., 1s correspond to ``up'', and 0s to ``down'').  We refer to $\PP(\mx{S})$ as the {\em lattice path} of $\mx{S}$.
\end{definition}
\begin{definition}
\label{definition:height}
Given $\mx{S}=s_1s_2\cdots s_{2n}$ as above, set $m := \min\{Y_1, \dots, Y_{2n}\}.$  Then the {\em height} of $s_i$ for $1 \leq i \leq 2n$ is
\begin{equation*}
h_i := \begin{cases}
Y_i - m  \qquad & \text{if} \; s_i = 1, \\
Y_i - m + 1  \qquad & \text{if} \; s_i = 0.
\end{cases}
\end{equation*}
The {\em height} of the path $\PP(\mx{S})$ is the maximum value of any $h_i$, and is denoted $h(\mx{S})$.
\end{definition}
\begin{remark*}
The shift by $m$ in Definition \ref{definition:height} ensures that the lowest height is always $1$.
\end{remark*}

\begin{figure}[htbp]
\begin{center}
\includegraphics{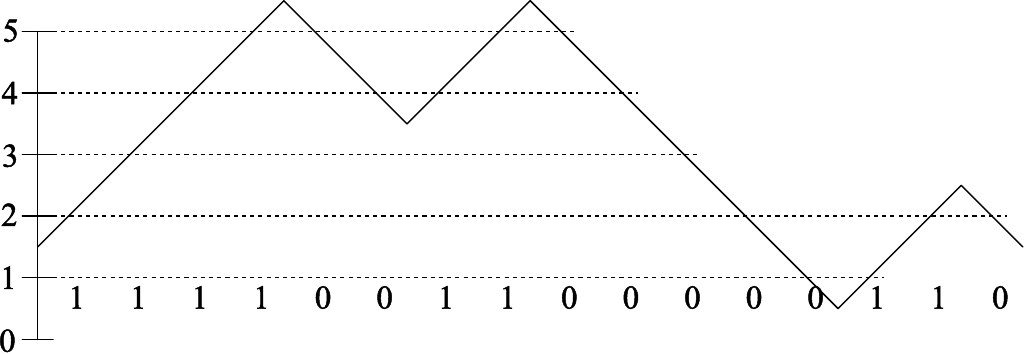}
\caption{\label{figure lattice path} The lattice path $\PP(\mx{S})$ of the string $\mx{S}=1^40^21^20^51^20$.  The heights are labeled starting at the minimum point, from $1$ to $5 = h(\mx{S})$.}
\end{center}
\end{figure}

The lattice path $\PP(\mx{S})$ is useful in understanding noncrossing pairings of $\mx{S}$ due to the following simple observation: if $\{i,j\}$ is in such a pairing $\pi$ (assume without loss of generality that $s_i = 1$ and $s_j=0$),  then there must be equal numbers of $1$s and $0$s among the $s_k$ with $i<k<j$; in other words, an equal number of up-slopes and down-slopes.  This proves that the pairings of $\mx{S}$ are restricted by heights.
\begin{lemma} \label{lemma:pairing height} If $\pi\in NC_2(\mx{S})$ and $\{i,j\}\in\pi$, then $h_i = h_j$.
\end{lemma}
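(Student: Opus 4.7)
The plan is to argue directly from the non-crossing condition that the stretch of the bit string strictly between positions $i$ and $j$ is itself balanced, and then translate this balance into an equality of $Y_j$ and $Y_i$ (up to the parity correction that distinguishes $1$-heights from $0$-heights). This is really a two-line argument once one unwinds Definitions \ref{def lattice path} and \ref{definition:height}; the only thing to verify carefully is the balance claim.

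First I would fix a pair $\{i,j\}\in\pi$ with $i<j$, and note that since $\pi$ pairs $1$s with $0$s, one of $s_i,s_j$ is $1$ and the other $0$. By the reflection symmetry (or simply by relabeling), I will assume $s_i=1$ and $s_j=0$. Next I would invoke the non-crossing condition: any other pair $\{k,\ell\}\in\pi$ with $i<k<j$ must also satisfy $i<\ell<j$, for otherwise $\{i,j\}$ and $\{k,\ell\}$ would form a crossing in the sense of Definition \ref{def NC2}. Hence the restriction $\pi|_{\{i+1,\dots,j-1\}}$ is itself a pairing of that interval, and since every pair matches a $1$ with a $0$, the substring $s_{i+1}\cdots s_{j-1}$ contains exactly the same number of $1$s as $0$s.

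Now I would translate this balance into the lattice-path language. By Definition \ref{def lattice path},
\begin{equation*}
Y_j - Y_i \;=\; \sum_{k=i+1}^{j}(-1)^{s_k+1} \;=\; (-1)^{s_j+1} + \sum_{k=i+1}^{j-1}(-1)^{s_k+1}.
\end{equation*}
The inner sum vanishes by the balance just established, and $s_j=0$ contributes $-1$, so $Y_j = Y_i - 1$. Applying Definition \ref{definition:height}, which adds a correction $+1$ in the $0$-case, yields
\begin{equation*}
h_j \;=\; Y_j - m + 1 \;=\; (Y_i - 1) - m + 1 \;=\; Y_i - m \;=\; h_i,
\end{equation*}
as desired.

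The main (and only) obstacle is the balance claim in the middle step; it is really a consequence of the definition of non-crossing, but the argument does depend on the fact that pairs meet $1$s with $0$s and not just that the restriction of $\pi$ to the inner interval is a set partition into blocks of size two. Once that is in hand, the rest is bookkeeping between $Y$-values and $h$-values, with the shift by $m$ in Definition \ref{definition:height} playing no role beyond cancelling on both sides.
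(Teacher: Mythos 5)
Your proof is correct and follows essentially the same route as the paper's: observe that the non-crossing condition forces the substring strictly between $i$ and $j$ to be balanced, then translate this into the equality of heights via the lattice-path definitions (the paper, just before the lemma, sketches exactly this observation with the same WLOG normalization $s_i=1$, $s_j=0$). Your write-up simply makes the bookkeeping explicit.
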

\begin{corollary}
\label{corollary:peak reduction} For any bitstring $\mx{S}$ we have $\phi(\mx{S}) =
\phi(\widetilde{\mx{S}})$, where $\widetilde{\mx{S}}$ is the result of removing the tallest peak and lowest valley in $\mx{S}$ to level them with the second tallest and second lowest.
\end{corollary}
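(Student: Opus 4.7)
The plan is to build an explicit bijection $NC_2(\mx{S}) \to NC_2(\widetilde{\mx{S}})$ by locating, inside every $\pi \in NC_2(\mx{S})$, the pairs that are \emph{forced} by Lemma \ref{lemma:pairing height}, and then showing that these forced pairs are precisely what is erased when the tallest peak is shortened and the lowest valley is lifted. The whole argument is essentially a packaging of the height-matching constraint with a uniqueness-of-heights observation.

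The first step is to analyze the tallest peak. Set $H := h(\mx{S})$ and let $H'$ be the height of the second tallest peak; if several peaks attain height $H$ we take $H' = H$ and the corollary is trivial, so assume $H' < H$. Near the tip of the tallest peak the string reads $\cdots 1 1 \cdots 1 0 \cdots 0 0 \cdots$, with the top $H - H'$ up-steps at heights $H'+1, H'+2, \ldots, H$ and the subsequent top $H - H'$ down-steps at heights $H, H-1, \ldots, H'+1$. By definition of $H'$, no other position in $\mx{S}$ reaches a height greater than $H'$, so for each $h$ with $H' < h \le H$ there is exactly one $1$-symbol and exactly one $0$-symbol at height $h$, both lying in this single peak. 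Lemma \ref{lemma:pairing height} then forces the unique $1$ at height $h$ to pair with the unique $0$ at the same height, producing a nested block of $H - H'$ pairs that must appear in every $\pi \in NC_2(\mx{S})$.

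The second step is the symmetric analysis of the lowest valley. Let $v$ denote the height of the second lowest valley; if $v = 1$ there is nothing to remove, so assume $v \ge 2$. The last $v - 1$ down-steps descending into the lowest valley have heights $v - 1, v - 2, \ldots, 1$ and the first $v - 1$ up-steps leaving it have heights $1, 2, \ldots, v - 1$, and these are the only positions in $\mx{S}$ at heights less than $v$. By the same reasoning, Lemma \ref{lemma:pairing height} forces a second nested block of $v - 1$ pairs at these positions.

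For the third step, I would observe that deleting the two nested blocks (a total of $2(H - H') + 2(v - 1)$ positions) turns $\mx{S}$ into exactly $\widetilde{\mx{S}}$, and that the map sending $\pi \in NC_2(\mx{S})$ to its restriction to the surviving positions gives the desired bijection, with inverse that re-inserts the two nested blocks at their forced locations. The only point requiring verification is that the two blocks are disjoint from each other and that their removal neither creates crossings nor disturbs the remaining pairs of $\pi$; this is immediate because the peak block lies strictly above height $H'$, the valley block lies strictly below $v \le H'$, and both blocks are internally nested and thus consistent with any non-crossing pairing of the remaining positions. I do not anticipate any serious obstacle to this argument, as it is a direct consequence of Lemma \ref{lemma:pairing height} together with the observation that extremal heights are realized uniquely.
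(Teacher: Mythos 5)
Your proof is correct and supplies exactly the reasoning the paper leaves implicit: the corollary appears without proof directly after Lemma \ref{lemma:pairing height}, and the intended argument is precisely yours --- each height strictly above the second-tallest peak (resp.\ strictly below the second-lowest valley) is realized by a unique $1$ and a unique $0$, so those pairs are forced by Lemma \ref{lemma:pairing height}, and deleting them yields a bijection onto $NC_2(\widetilde{\mx{S}})$. If you wanted to be fully airtight you could add one line explaining why the positions at height $> H'$ form a single contiguous block $1^{H-H'}0^{H-H'}$ around the maximum rather than being scattered: any valley strictly above level $H'+m$ would be flanked by two peaks of height exceeding $H'$, contradicting the uniqueness of the tallest peak; the analogous remark holds for the valley block.
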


We summarize the information in $\PP(\mx{S})$ by writing the height $h_i$ of each slope $s_i$ as a label above the corresponding bit in the string.  The result for the string in Figure \ref{figure lattice path} is
\[ \mathop{1}^2\,\mathop{1}^3\,\mathop{1}^4\,\mathop{1}^5\,\mathop{0}^5\,\mathop{0}^4\,
\mathop{1}^4\,\mathop{1}^5\,\mathop{0}^5\,\mathop{0}^4\,\mathop{0}^3\,\mathop{0}^2\,
\mathop{0}^1\,\mathop{1}^1\,\mathop{1}^2\,\mathop{0}^2. \]
This example illustrates that the first label need not be $1$.

\medskip

The simple observations above allow us to enumerate pairings of small strings quite
easily.  Following is the proof of Proposition \ref{prop low-run formulas}.a.

\begin{proof}[Proof of Proposition \ref{prop low-run formulas}.a] For the first statement, the labels of $1^n 0^n$ are
\[ \mathop{1}^1\,\mathop{1}^2\, \cdots\,\mathop{1}^{n-1}\, \mathop{1}^n\,\mathop{0}^n\,\mathop{0}^{n-1}\,\cdots\,\mathop{0}^2\,\mathop{0}^1. \]
Note that each label from $1$ through $n$ appears exactly twice: once on a $1$ and
once on a $0$.  This means that there can be at most one pairing in $NC_2(1^n 0^n)$,
and it is simple to check that the requisite totally-nested pairing is non-crossing.
So $\phi(n, n) = 1$.  (This example demonstrates the content of Corollary \ref{corollary:peak reduction}; in the highest peak and lowest valley, the pairings are prescribed to be locally nested.)

\medskip

Now consider $1^{n_1} 0^{m_1} 1^{n_2} 0^{m_2}$.  Corollary
\ref{corollary:phisymmetry} allows us to assume that $i$, the minimum of the $n_j$, is $n_1$.  Then $m_1 = \mu_1+i$ where $\mu_1\ge 0$.  Also, $n_2 \ge \mu_1 + i$, for $i \le m_2 = n_1 + n_2 - m_1 = i + n_2 - (\mu_1+i)$, and so we may write $n_2 = \nu_2 + \mu_2 + i$ for $\nu_2\ge 0$, and subtracting we also have $m_2 = \nu_2 + i$.  We therefore write
\[ 1^{n_1} 0^{m_1} 1^{n_2} 0^{m_2} = 1^i 0^i 0^{\mu_1} 1^{\mu_1} 1^i 1^{\nu_2} 0^{\nu_2} 0^i, \]
and the corresponding lattice path is represented in Figure \ref{figure lattice path 2}.  The height labels are as follows (to save space, we have subtracted $\mu_1$ from all labels):
\[ \mathop{1}^{1}\, \cdots \, \mathop{1}^{i}\, \mathop{0}^{i} \, \cdots \, \mathop{0}^{1}\,\boxed{\mathop{0}^{0}\,\cdots \mathop{0}^{1-\mu_1}\,\mathop{1}^{1-\mu_1}\,\cdots\,\mathop{1}^{0}}\,\mathop{1}^{1}\,\cdots\,\mathop{1}^{i}\,\boxed{\mathop{1}^{i+1}\,\cdots\,\mathop{1}^{i+\nu_1}\, \mathop{0}^{i+\nu_1}\,\cdots\,\mathop{0}^{i+1}}\,\mathop{0}^{i}\,\cdots\,\mathop{0}^{1}.  \]
The boxed regions contain bits with unique labels: there is only one $1$ (and one $0$) for each (shifted) label $0$ through $1-\mu_1$ and $i+1$ through $i+\nu_1$, and so those intervals must be paired in nested fashion as above.   The remaining unpaired bits form the reduced string $(1^i0^i)^2$, and from Proposition \ref{prop Fuss-Catalan}, $\phi((1^i 0^i)^2) = C^{(2)}_i = 1+i$, as required.
\begin{figure}[htbp]
\begin{center}
\includegraphics[width=0.8\textwidth]{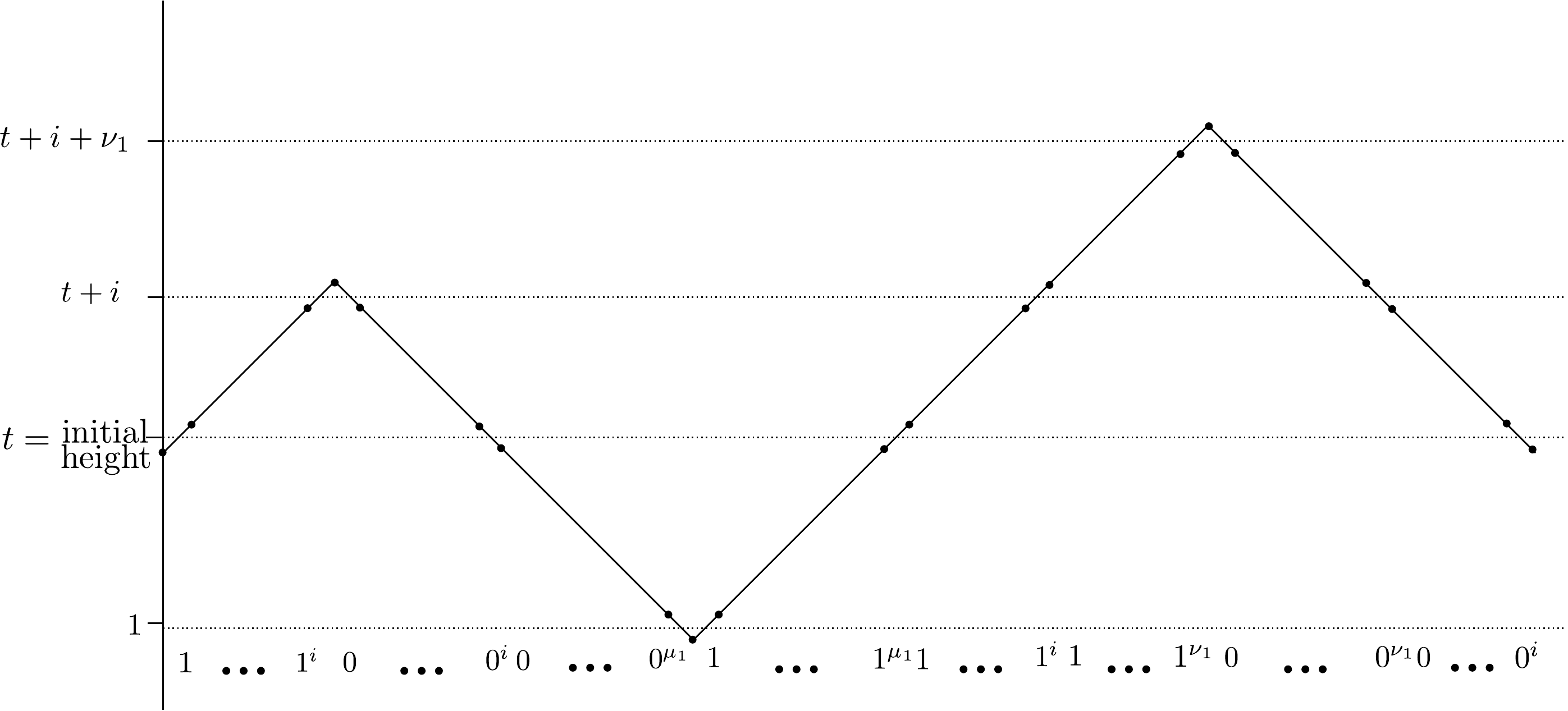}
\caption{\label{figure lattice path 2}The lattice path corresponding to the string
$1^{n_1} 0^{m_1} 1^{n_2} 0^{m_2} = 1^i 0^i 0^{\mu_1} 1^{\mu_1} 1^i 1^{\nu_2}
0^{\nu_2} 0^i$.  Here we have represented the path with run heights in a particular ranking (first minimal as always, then second and fourth, then third); of course, any ordering is possible, but the above proof works in general.}
\end{center}
\end{figure}
\end{proof}

\subsection{Recurrence and Functional Equations}\label{sec:recur}

The following multivariate recurrence relation for $\phi$ underlies many of the arguments in Section \ref{section:phi*}.

\begin{theorem}\label{theorem recurrence} Let $n_1,\ldots,n_r,m_1,\ldots,m_r>0$ with $n_1+\cdots+n_r = m_1+\cdots +m_r$.  Define, for $1\le k\le r$,
\begin{equation*}
d_k := -(n_1 + \cdots + n_k) + (m_1 + \cdots + m_k).
\end{equation*}
Then we have the recurrence
\begin{equation*}
\phi(n_1,m_1,\ldots,n_r,m_r) \\
 = \sum_{k=1}^r \phi(n_1-1,m_1,\ldots,n_k,m_k-d_k-1)\cdot \phi(d_k, n_{k+1},m_{k+1},\ldots,n_r,m_r).
\end{equation*}
\end{theorem}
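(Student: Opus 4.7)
The plan is to prove the recurrence by conditioning on the partner of position $1$. Because $\mx{S}=1^{n_1}0^{m_1}\cdots 1^{n_r}0^{m_r}$ begins with a $1$, every $\pi\in NC_2(\mx{S})$ pairs position $1$ with some $0$, which must lie in a unique zero-run, say the $k$-th run $0^{m_k}$ for some $1\le k\le r$. The non-crossing property then forces every other pair of $\pi$ to have both endpoints either strictly inside the arc between position $1$ and this $0$, or strictly outside it; consequently $\pi$ decomposes uniquely as this distinguished pair together with an independent non-crossing pairing on each of the two induced substrings.

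The next step is to identify which $0$ in the $k$-th zero-run can serve as the partner. If it is the $\ell$-th $0$ of that run, then balancing $1$s and $0$s inside the arc gives
\[
(n_1-1)+n_2+\cdots+n_k \;=\; m_1+\cdots+m_{k-1}+(\ell-1),
\]
which simplifies to $\ell = m_k - d_k$, with $d_k$ as defined in the statement. Hence a partner exists (and is unique) in the $k$-th zero-run exactly when $0\le d_k\le m_k-1$, and when it does the inside and outside substrings are
\[
1^{n_1-1}0^{m_1}\cdots 1^{n_k}0^{m_k-d_k-1} \qquad \text{and} \qquad 0^{d_k}1^{n_{k+1}}0^{m_{k+1}}\cdots 1^{n_r}0^{m_r},
\]
respectively. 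Invoking the tuple extension (\ref{equation:phiintegers}) — in particular the third clause, which interprets a lone leading integer as counting pairings of a string whose first $1$-run is empty — the number of non-crossing pairings of these two pieces is exactly the two factors forming the $k$-th summand on the right-hand side.

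Finally I would sum over $k=1,\ldots,r$. For any $k$ with $d_k<0$ the second factor vanishes by the first clause of (\ref{equation:phiintegers}); for any $k$ with $d_k\ge m_k$ the argument $m_k-d_k-1$ is negative and the first factor vanishes for the same reason. Thus the sum automatically restricts to the indices $k$ for which an admissible partner exists, and the recurrence follows. The one conceptual input is the non-crossing decomposition; the remaining work is bookkeeping with (\ref{equation:phiintegers}) at the boundary — the edge cases $n_1=1$ (so the leading tuple entry is $0$), $d_k=0$ (so the leading $0^{d_k}$ in the outside substring is empty), and the forced $d_r=0$ at $k=r$ (guaranteed by balance). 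I expect this bookkeeping to be the only mildly delicate part, but it is mechanical rather than requiring new ideas.
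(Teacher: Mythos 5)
Your proposal is correct and follows essentially the same route as the paper: condition on which zero-run contains the partner of position $1$, use a balance/height count to pin down the unique admissible $0$ in that run (your explicit computation $\ell = m_k - d_k$ is exactly the paper's Lemma~\ref{lemma:pairing height} height condition in arithmetic form), decompose via non-crossing into the two substrings, and let the conventions of (\ref{equation:phiintegers}) absorb the vacuous terms.
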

\begin{remark} Using the conventions of (\ref{equation:phiintegers}), the $k$-th term vanishes whenever $d_k < 0$ or $m_k - d_k \leq 0.$  Furthermore, the second factor in the last term of the right-hand side is just $\phi(\emptyset) = 1$, and the first factor is $\phi(n_1-1,m_1,\ldots,n_r,m_r-1)$ (which is always non-zero since $d_r = 0$ for balanced strings).  We choose not to write this as a separate term in order to keep the recurrence relation more concise.  \end{remark}

\begin{proof}

The recurrence arises by considering the possible pairings of the first $1$.  If we write $\mx{S} = 1^{n_1}0^{m_1}\cdots 1^{n_r}0^{m_r}$, we trivially have
\begin{equation} \label{eqn proof recurrence 1} \phi(\mx{S}) = |NC_2(\mx{S})| = \sum_{k=1}^r |\{\pi\in NC_2(\mx{S})\,:\,\pi\text{ pairs the first} \; 1 \; \text{to a} \; 0 \; \text{in run} \; k\}|. \end{equation}

If the height condition in Lemma \ref{lemma:pairing height} is not met by any of the $0$s in the $k$-th run, then there are no such pairings, and the $k$-th term in (\ref{eqn proof recurrence 1}) vanishes.  We use the numbers $d_k$ to measure the relevant heights (for convenience, we shift all heights so that $h_1 = 1$): the quantity $-d_k + 1$ is the height of the $k$-th valley, and$-d_k + m_k + 1$ is the height of the $k$-th peak.  If $d_k<0$, then the $k$-th run of $0$s lies entirely above height $1$, and if $-d_k+m_k >0$, then the $k$-th run of $0$s lies entirely below height $1$.  In either case, the $k$-th term in (\ref{eqn proof recurrence 1}) is zero.

Otherwise, there is a unique $0$ in the $k$-th run at the same height at the first $1$ in the first run.  Suppose that $s_1 = 1$ pairs to $s_p = 0$ in the $k$-th run of $0$s.  Once this pairing is made, the non-crossing condition on $\pi$ breaks the remaining bits into two disjoint substrings:
\[ 1^{n_1-1} 0^{m_1}\cdots 1^{n_k} 0^{m_k-d_k-1} \quad \text{ and } \quad 0^{d_k} 1^{n_{k+1}} 0^{m_{k+1}} \cdots 1^{n_r} 0^{m_r}. \]
The total number of pairings is then the product of the pairings on each substring.

\medskip

Thus in all cases, the $k$-th term in (\ref{eqn proof recurrence 1}) is
\begin{equation} \label{eqn proof recurrence 2}
\phi(1^{n_1-1} 0^{m_1}\cdots 1^{n_k} 0^{m_k-d_k-1})\cdot\phi(0^{d_k} 1^{n_{k+1}} 0^{m_{k+1}} \cdots 1^{n_r} 0^{m_r}). \end{equation}

\begin{figure}[htbp]
\begin{center}
\includegraphics{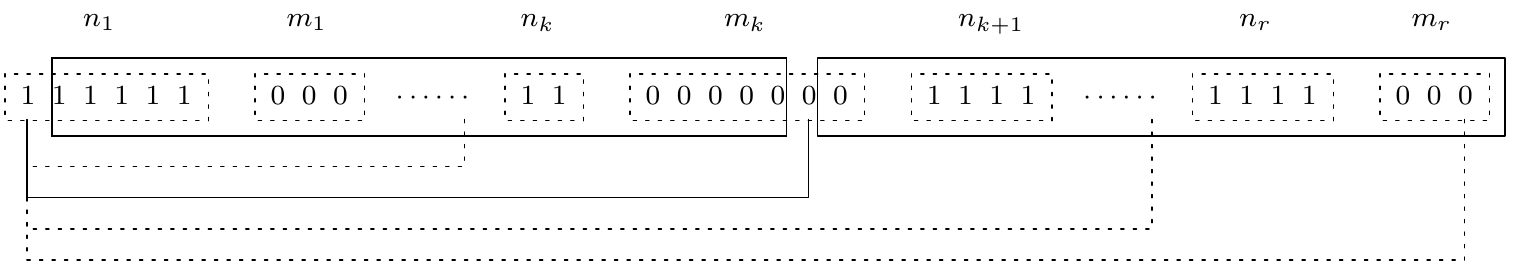}
\caption{\label{figure recurrence} Once a choice has been made for the pairing of the left-most $1$, the remaining allowed pairings are forced to decompose into the two substrings due to the non-crossing condition.}
\end{center}
\end{figure}

\end{proof}

\begin{remark} \label{remark string recurrence} Theorem \ref{theorem recurrence} is a precise quantification of the statement that for any string $\mx{S}$ beginning with $1$,
\[ \phi(\mx{S}) = \sum_{\mx{S} = 1\mx{R}0\mx{T}} \phi(\mx{R})\phi(\mx{T}). \]
The sum may be taken over all balanced substrings $\mx{R},\mx{T}$ of $\mx{S}$, although the requirement that the strings be balanced is in fact extraneous, since $\phi(\mx{R}) = 0$ automatically whenever $\mx{R}$ is not balanced.  \end{remark}

As an application of Theorem \ref{theorem recurrence}, we now complete the proof of Proposition \ref{prop low-run formulas}.

\begin{proof}[Proof of Proposition \ref{prop low-run formulas}.b] We wish to calculate $\phi(n_1,n_1,n_2,n_2,n_3,n_3)$; let us assume that $n_1 = \min\{n_1,n_2,n_3\} \equiv i$.  Then the two highest peaks are $n_2,n_3$, and following Corollary \ref{corollary:peak reduction} we then have $\phi(n_1,n_1,n_2,n_2,n_3,n_3) = \phi(i,i,j,j,j,j)$ where $j = \min\{n_2,n_3\}$.  The benefit of having $n_k = m_k$ for all $k$ as in this example is that each $d_k$ is $0$, and so Theorem \ref{theorem recurrence} gives directly
\[ \phi(i,i,j,j,j,j) = \phi(i-1,i-1)\phi(j,j,j,j) + \phi(i-1,i,j,j-1)\phi(j,j) + \phi(i-1,i,j,j,j,j-1). \]
From the proof of Proposition \ref{prop low-run formulas}.a, we know that $\phi(i-1,i-1) = \phi(j,j)=1$, while $\phi(j,j,j,j) = 1+j$ and $\phi(i-1,j,j,j-1) = 1+\min\{i-1,j,j-1\} = 1+i-1 = i$, and so
\begin{equation} \label{eqn 3-run recurrence} \phi(i,i,j,j,j,j) = 1+j +i + \phi(i-1,i,j,j,j,j-1). \end{equation}
%Now, consider the lattice path for the string $1^{i-1} 0^j 1^j 0^j 1^j 0^{j-1}$, as displayed in Figure \ref{figure triple lattice path}.
%\begin{figure}[htbp]
%\begin{center}
%%\input{}
%\caption{The lattice path $\PP(1^{i-1} 0^j 1^j 0^j 1^j 0^{j-1})$, for $i\le j$.}
%\label{figure triple lattice path}
%\end{center}
%\end{figure}
%The final $0$ of the first and second runs, and the first $1$ of the second and third runs, are the only bits with lowest height, and so they must be paired.  Their relative orientation, $1010$, yields only two possible pairings:
%\begin{figure}[htbp]
%  \begin{center}
%    \unitlength=4pt
%    \begin{picture}(50, 10)(0,-10)
%    \thinlines
%    \gasset{Nframe=n,Nh=6,Nmr=0,AHnb=0}
%    \node(s1)(0,0){$1$}
%    \node(s2)(5,0){$0$}
%    \node(s3)(10,0){$1$}
%    \node(s4)(15,0){$0$}
%    \drawline(0,-2)(0,-4)(5,-4)(5,-2)
%    \drawline(10,-2)(10,-4)(15,-4)(15,-2)

%    \node(t1)(30,0){$1$}
%    \node(t2)(35,0){$0$}
%    \node(t3)(40,0){$1$}
%    \node(t4)(45,0){$0$}
%    \drawline(30,-2)(30,-6)(45,-6)(45,-2)
%    \drawline(35,-2)(35,-4)(40,-4)(40,-2)

%\end{picture}
%\end{center}
%\end{figure}
For the remaining $\phi$ term on the right, it is convenient to rotate the string into one that begins and ends with a $0$, and interchange $1$s and $0$s:
\[ \mx{1}-\Rot_{i+j-1}(1^{i-1} 0^i 1^j 0^j 1^j 0^{j-1}) = 10^j1^j0^j1^{j-1}0^{i-1}1^{i-1}, \]
and therefore $\phi(i-1,j,j,j,j,j-1) = \phi(10^j1^j0^j1^{j-1}0^{i-1}1^{i-1})$.  Labeling this string we have
\[ \mathop{1}^j \mathop{0}^j \mathop{0}^{j-1} \cdots \mathop{0}^1 \mathop{1}^1 \cdots \mathop{1}^j
\mathop{0}^j \cdots \mathop{0}^1 \mathop{1}^1\cdots \mathop{1}^{j-1} \mathop{0}^{j-1} \cdots \mathop{0}^{j-i+1} \mathop{1}^{j-i+1} \cdots \mathop{1}^{j}. \]
There are only two $0$s with the highest label $j$ -- one in the first run and one in the second run.  Applying Theorem \ref{theorem recurrence}, we get
\[ \phi(10^j1^j0^j1^{j-1}0^{i-1}1^{i-1}) = \phi(\emptyset)\phi(0^{j-1}1^j0^j1^{j-1}0^{i-1}1^{i-1})
+ \phi(0^j1^j)\phi(0^{j-1}1^{j-1}0^{i-1}1^{i-1}). \]
As calculated above, $\phi(\emptyset) = 1$, $\phi(0^{j}1^j) = 1$, and
$\phi(0^{j-1}1^{j-1}0^{i-1}1^{i-1}) = 1+\min\{i-1,j-1\} = 1+i-1 = i$.  Finally, since
the string $0^{j-1}1^j0^j1^{j-1}0^{i-1}1^{i-1}$ has a unique tallest peak, following
Corollary \ref{corollary:peak reduction} we can reduce it to the same height as the second highest peak:
\[ \phi(0^{j-1}1^j0^j1^{j-1}0^{i-1}1^{i-1}) = \phi(0^{j-1}1^{j-1} 0^{j-1} 1^{j-1} 0^{i-1} 1^{i-1}) = \phi(i-1,i-1,j-1,j-1,j-1,j-1). \]
Let $\phi_{i,j} = \phi(i,i,j,j,j,j)$; then combining these calculations with (\ref{eqn 3-run recurrence}), we have
\begin{equation} \label{eqn 3-run recurrence 2} \phi_{i,j} = 1+j+2i + \phi_{i-1,j-1}. \end{equation}
Iterating (\ref{eqn 3-run recurrence}) $i$ times yields
\[ \phi_{i,j} = \sum_{k=0}^{i-1} [1+(j-k)+2(i-k)] + \phi_{0,j-i}, \]
and $\phi_{0,j-i} = \phi(0,0,j-i,j-i,j-i,j-i) = \phi(j-i,j-i,j-i,j-i) = 1+j-i$.
Summing all the parts yields $\phi_{i,j} = \frac{1}{2}i^2 + ij + \frac{3}{2}i + j +1$, as required.
\end{proof}

\begin{remark} Note that in the case $n_1 = \min\{n_1,n_2,n_3\} = \min\{n_2,n_3\}$,
	Proposition \ref{prop low-run formulas} gives
\[ \phi(n_1,n_1,n_2,n_2,n_3,n_3) = \frac{3}{2}n_1^2 + \frac{5}{2}n_1 + 1, \]
and it is easy to check that this is indeed equal to the Fuss-Catalan number $C^{(n_1)}_{3}$.  As this holds in particular when $n_1=n_2=n_3$, this reproves Proposition \ref{prop Fuss-Catalan} in the case $r=3$. \end{remark}

Theorem \ref{theorem recurrence} can be written as a functional equation for the generating function of $\phi$.  The following is stated in Example 16.17 in \cite{Nica Speicher Book}, where it is proved by very different means.

\begin{proposition} \label{prop generating function} Let $F(x_0,x_1)$ be the non-commutative formal power series generating function for $\phi$,
\[ F(x_0,x_1) = \sum_{n=0}^\infty \sum_{\mx{S}\in \{0,1\}^{n}} \phi(\mx{S})\, \mx{x}^\mx{S}, \]
where for any (not necessarily balanced) string $\mx{S}$ of length $n$, $\mx{x}^\mx{S}$ denotes the non-commutative monomial $x_{s_1} x_{s_2} \cdots x_{s_n}$.  Then $F=F(x_0,x_1)$ satisfies the non-commutative quadratic equation
\begin{equation*} F= 1 + x_0 F x_1 F + x_1 F x_0 F. \end{equation*}
\end{proposition}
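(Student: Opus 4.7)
The plan is to convert the recurrence from Remark~\ref{remark string recurrence} directly into a statement about formal power series, handling separately the strings that begin with $1$ and those that begin with $0$.

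First, I would observe that because $\phi(\mathbf{S})=0$ whenever $\mathbf{S}$ is unbalanced, the series $F$ may equivalently be written as $F = 1 + \sum_{\mathbf{S}\text{ balanced},\,\mathbf{S}\neq\emptyset}\phi(\mathbf{S})\,\mathbf{x}^{\mathbf{S}}$, with the constant term $1$ coming from $\phi(\emptyset)=1$. Any nonempty balanced $\mathbf{S}$ begins either with a $1$ or with a $0$, so it suffices to show that the contribution from the first class equals $x_1 F x_0 F$ and that from the second class equals $x_0 F x_1 F$.

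For $\mathbf{S}$ beginning with $1$, the recurrence in Remark~\ref{remark string recurrence} (a direct consequence of Theorem~\ref{theorem recurrence} phrased at the level of strings, using that the first $1$ must be paired with some $0$ at the same height, which uniquely splits $\mathbf{S}$ as $1\mathbf{R}0\mathbf{T}$) gives
\[
\phi(\mathbf{S}) \;=\; \sum_{\mathbf{S}=1\mathbf{R}0\mathbf{T}}\phi(\mathbf{R})\,\phi(\mathbf{T}),
\]
where the sum ranges over all ways to write $\mathbf{S}$ in that form (with the convention that $\phi$ vanishes on unbalanced arguments, allowing $\mathbf{R},\mathbf{T}$ to be arbitrary). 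Multiplying by $\mathbf{x}^{\mathbf{S}} = x_1 \mathbf{x}^{\mathbf{R}} x_0 \mathbf{x}^{\mathbf{T}}$ and summing over all $\mathbf{S}$ beginning with $1$ yields
\[
\sum_{\mathbf{S}\text{ starts with }1}\phi(\mathbf{S})\,\mathbf{x}^{\mathbf{S}}
\;=\; x_1\Biggl(\sum_{\mathbf{R}}\phi(\mathbf{R})\mathbf{x}^{\mathbf{R}}\Biggr) x_0 \Biggl(\sum_{\mathbf{T}}\phi(\mathbf{T})\mathbf{x}^{\mathbf{T}}\Biggr) \;=\; x_1 F x_0 F.
\]
For $\mathbf{S}$ beginning with $0$, I would apply the negation symmetry of Proposition~\ref{prop rotation}: $\phi(\mathbf{S})=\phi(1-\mathbf{S})$, so tracking the first $0$ being paired to a $1$ produces the analogous decomposition $\mathbf{S}=0\mathbf{R}1\mathbf{T}$ and contributes $x_0 F x_1 F$. (Equivalently, one can repeat the proof of Theorem~\ref{theorem recurrence} verbatim with roles of $0$ and $1$ swapped.) Adding the constant term and the two contributions gives the claimed equation $F = 1 + x_0 F x_1 F + x_1 F x_0 F$.

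The only real subtlety is bookkeeping in the non-commutative setting: the order of the factors in each monomial $\mathbf{x}^{\mathbf{S}}$ must be preserved, so the decomposition $\mathbf{S}=1\mathbf{R}0\mathbf{T}$ really does correspond to the product $x_1\cdot\mathbf{x}^{\mathbf{R}}\cdot x_0\cdot\mathbf{x}^{\mathbf{T}}$ in that precise order, and the Cauchy-product structure of $F\cdot F$ correctly enumerates all ordered pairs $(\mathbf{R},\mathbf{T})$. Once this is carefully tracked, the identity follows immediately from the recurrence, and no separate argument is needed to handle convergence since everything takes place in the formal power series ring $\mathbb{Z}\langle\langle x_0,x_1\rangle\rangle$.
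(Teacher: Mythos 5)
Your proposal is correct and is essentially the paper's own proof, reorganized: you expand $F$ and show it equals the right-hand side, while the paper expands the right-hand side and shows its coefficients equal $\phi$, but both rest on the string-level recurrence of Remark~\ref{remark string recurrence}, the symmetry between strings starting with $1$ and those starting with $0$, and the non-commutative Cauchy product. No substantive difference.
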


\begin{proof} Set $G(x_0,x_1) = 1+x_0\,F(x_0,x_1)\,x_1\,F(x_0,x_1) + x_1\,F(x_0,x_1)\,x_0\,F(x_0,x_1)$; to be more precise,
\begin{equation} \label{eqn G} G \equiv 1+ \sum_{\mx{R},\mx{T}} \phi(\mx{R})\phi(\mx{S})\,(x_0\,\mx{R}\,x_1\,\mx{T}+x_1\,\mx{R}\,x_0\,\mx{T}). \end{equation}
$G$ is a formal power-series in $x_0,x_1$; denote its coefficient function as $\psi$, so $G(x_0,x_1) = \sum_\mx{S} \psi(\mx{S})\,\mx{x}^\mx{S}$.  Our goal is to show that $\psi = \phi$.  For a given string $\mx{S}$, (\ref{eqn G}) states that either $\mx{S}=\emptyset$ (in which case $\psi(\emptyset)=1=\phi(\emptyset)$), or
\[ \psi(\mx{S}) = \sum_{\mx{R},\mx{T}\atop\mx{S}=1\mx{R}0\mx{T}} \phi(\mx{R})\,\phi(\mx{T})
+  \sum_{\mx{R},\mx{T}\atop\mx{S}=0\mx{R}1\mx{T}} \phi(\mx{R})\,\phi(\mx{T}) \]
Of course, $\mx{S}$ either begins with $1$ or begins with $0$, so only one of the two sums above is non-zero.  We treat the case $\mx{S}$ begins with $1$.  Now, $\phi(\mx{R})=0$ whenever $\mx{R}$ is not balanced, and so we really have
\[ \psi(\mx{S}) = \sum_{\mx{R},\mx{T}\text{ balanced}\atop\mx{S}=1\mx{R}0\mx{T}} \phi(\mx{R})\,\phi(\mx{T}). \]
Remark \ref{remark string recurrence} explains that the above summation is actually equal to the summation on the right-hand-side of Theorem \ref{theorem recurrence}; thence, $\psi(\mx{S}) = \phi(\mx{S})$.  The case that $\mx{S}$ begins with $0$ is identical.
\end{proof}

\begin{remark} If the second term of the quadratic equation in Proposition \ref{prop
	generating function} is removed, what remains is identical to the standard
	recurrence satisfied by the generating function for {\em Dyck paths}
	\cite[Example 6.2.6]{Stanley}.
\end{remark}

%\begin{corollary}\label{comm quadratic equation} Given a positive integer $s$,
%\[ \sum_{|\mx{S}|=s} \phi(\mx{S}) = 2^s\,C_s, \]
%where $C_s = \frac{1}{s+1}\binom{2s}{s}$ is the $s$th Catalan number.
%\end{corollary}

%In brief, Corollary \ref{comm quadratic equation} follows from the abelianization of
%Equation \ref{nc quadratic equation}: for $x,y\ne 0$ in $\Te$, the equation $f = 1 + 2xy f^2$ has as solutions $f = (4xy)^{-1}\left(1\pm\sqrt{1-8xy}\right)$; in order to be expanded as a power-series in $xy$, the positive square root must be chosen, in which case the solution is given by $f = \sum_{s=0}^\infty 2^s\,C_s\,(xy)^s$ for small enough $|xy|$.  Comparing to the non-commutative power-series $F(x,y)$ from Corollary \ref{generating function}, we see that the term $(xy)^s$ is the abelianization of all balanced words of sum $s$ in $x$ and $y$, which yields the Corollary.  To be more precise:

\subsection{Rough Bounds} \label{section rough bounds} Our goal in this paper is to prove the sharp upper bound of Theorem \ref{Main Theorem}.  We end this section by providing a number of rougher bounds, both upper and lower, for $\phi$ on arbitrary balanced strings.

\begin{proposition} \label{prop NC2 lower bound} Let $\mx{S} = 1^{n_1}0^{m_1}\cdots1^{n_r}0^{m_r}$ be a balanced string, and let $i$ be the minimum block size, $i=\min\{n_1,m_1,\ldots,n_r,m_r\} \ge 1$.  Then
\[ \phi(\mx{S}) \ge (1+i)^{r-1}. \]
\end{proposition}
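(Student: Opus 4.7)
I would prove the bound by injecting $\{0,1,\ldots,i\}^{r-1}$ into $NC_2(\mx{S})$, which immediately gives $\phi(\mx{S})\ge(1+i)^{r-1}$. By Corollary \ref{corollary:phisymmetry}, a rotation allows us to assume that the minimal run is $n_1=i$, so that the first $i$ ones of $\mx{S}$ form a block of minimal length.

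For each tuple $\vec{a}=(a_1,\ldots,a_{r-1})\in\{0,1,\ldots,i\}^{r-1}$, I would construct $\pi_{\vec{a}}\in NC_2(\mx{S})$ in two stages. First, at each junction $k$ (the boundary between the $k$-th $0$-run and the $(k+1)$-st $1$-run), form $a_k$ nested ``junction arches'' pairing the last $a_k$ zeros of the $k$-th $0$-run with the first $a_k$ ones of the $(k+1)$-st $1$-run; this is well-defined since $a_k\le i\le\min\{m_k,n_{k+1}\}$. Second, complete the pairing on the remaining balanced substring by a canonical non-crossing algorithm, for example the greedy leftmost ``$1,0$'' procedure inherited from the King Arthur argument. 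Because the junction arches form self-contained nested towers, they do not cross any completion pair, and $\pi_{\vec{a}}\in NC_2(\mx{S})$.

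For injectivity, I would recover $\vec{a}$ from $\pi_{\vec{a}}$ via the signature $b_k(\pi)$ defined as the largest integer $a$ such that the last $a$ zeros of the $k$-th $0$-run are paired in $\pi$ with the first $a$ ones of the $(k+1)$-st $1$-run in nested fashion. Distinct signatures $(b_1,\ldots,b_{r-1})$ clearly give distinct pairings, and the goal is to choose the canonical completion so that $b_k(\pi_{\vec{a}})$ tracks $a_k$ on a shifted $(1+i)$-element range at each junction.

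\textbf{Main obstacle.} The difficulty is that $b_k$ cannot always start at $0$: by Lemma \ref{lemma:pairing height}, height equality may force $b_k\ge f_k$ for some $f_k > 0$ (for example, $b_1\ge 1$ is forced in every pairing of $\mx{S}=100110$, because height $1$ contains only the single pair $\{3,4\}$). The argument must therefore show that the realizable values of $b_k$ contain an interval of length at least $1+i$, and that the resulting choices across the $r-1$ junctions can be combined independently into non-crossing pairings. The natural approach is to reduce via Corollary \ref{corollary:peak reduction} to the case where all internal valleys of $\PP(\mx{S})$ descend to the bottom stratum of the path; then each internal valley of $\PP(\mx{S})$ contributes a $(1+i)$-fold degree of freedom, coming from how many consecutive bits at the bottom plateau near that valley are grouped as junction arches, and these local choices may be made independently at each junction, yielding the desired injection.
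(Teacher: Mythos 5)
Your approach is genuinely different from the paper's: you try to build a \emph{simultaneous} injection $\{0,\ldots,i\}^{r-1}\hookrightarrow NC_2(\mx{S})$ by choosing the junction-arch count at all $r-1$ internal junctions at once. The paper instead argues by induction on $r$: after rotating so $n_1=i$ is minimal, it distributes the block $1^{n_1}$ between the $0$-run to its right ($0^{m_1}$) and the $0$-run to its cyclic left ($0^{m_r}$), getting $i+1$ choices of a single split parameter $\ell$; the residual bits form a balanced word with $r-1$ runs and minimum block size $\geq i$, and the induction closes. This one-junction-at-a-time structure neatly avoids having to verify independence across junctions, which is the crux of your plan.

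You have correctly identified the main obstacle in your approach (the realizable values of $b_k$ may be shifted away from $\{0,\ldots,i\}$; your $100110$ example is right), but the proposed resolution does not hold up. First, Corollary \ref{corollary:peak reduction} only levels a \emph{unique} tallest peak or lowest valley with the next one; once there are ties it stops, so it cannot in general force all internal valleys to the bottom stratum (e.g.\ $\mx{S}=10110100$ has internal valleys at heights $0,1,0$ and admits no further peak reduction). Second, even when reduction applies, it deletes bits and so can shrink $r$ or $i$ (when it consumes a minimal block entirely), giving a strictly weaker target bound for the reduced string, which you cannot then transfer back to $(1+i)^{r-1}$. Third, even in the idealized bottomed-out case, the claim that choices at distinct junctions combine independently into a single non-crossing pairing, and that your ``greedy'' completion never creates extra junction arches (which would corrupt the signature $b_k$), is asserted rather than proved. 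These are genuine gaps: your plan is plausible and the injection does appear to exist in small cases, but as written it does not constitute a proof. The paper's inductive argument is shorter precisely because it makes only one choice per step and pushes all other dependencies into the inductive hypothesis.
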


\begin{proof} The cases $r=1,2$ are proved in Proposition \ref{prop low-run formulas}, providing the base case for an induction.  If $r\ge 2$, let $\mx{S}=1^{n_1}0^{m_1}\cdots1^{n_r}0^{m_r}$ be a balanced string with $r$ runs and minimum block size $i$; without loss of generality we assume that $n_1$ is minimal ($n_1 = i$). Since both $m_1, m_r\ge i$, for any $0\le \ell \le i = n_1$ we may pair the last $\ell$ $1$s in the first block $1^{n_1}$ to the first $\ell$ $0$s, with the remaining $i-\ell$ $1$s pairing to the final $i-\ell \le m_r$ $0$s in the final block.  The remaining internal string is then $0^{m_1-\ell}1^{n_2}\cdots0^{m_{r-1}}1^{n_r}0^{m_r-(i-\ell)}$, which can be rotated to
\begin{equation*}
\mx{\tilde{S}} = 1^{n_2}0^{m_2}\cdots1^{n_r}0^{m_1+m_r-i}.
\end{equation*}
This is a balanced string with $r-1$ runs, and its minimum run length $\tilde{i} = \min\{n_2, m_2, \cdots, n_r, m_r + m_1 - i\} \geq i$.  The inductive hypothesis then implies that $\phi(\mx{\tilde{S}}) \geq (1+\tilde{i})^{r-2}$.
\begin{figure}[htbp]
\begin{center}
\includegraphics{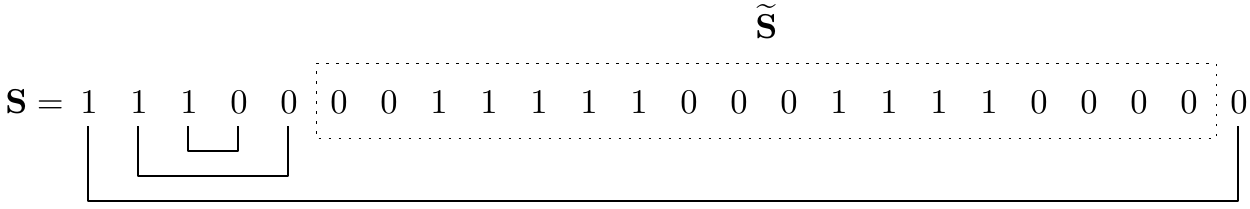}
\caption{One of the $i+1$ configurations for the first block of $1$s, yielding all the pairings of $\tilde{\mx{S}}$; in this example, $i=3$, and $\ell=2$.}
\label{fig NC_2 lower bound proof}
\end{center}
\end{figure}

Overall, for each choice of $0 \leq \ell \leq i$, we therefore have at least $(1+i)^{r-2}$ distinct pairings of $\mx{S}$, and the pairings for different $\ell$ are distinct.  This implies that $\phi(\mx{S}) \geq (1+i)^{r-1}$ as claimed.
\end{proof}
%\begin{remark}  The inductive proof actually yields a somewhat larger lower bound $(1+i_1)(1+i_2)\cdots(1+i_{r-1})$, where $i_1 = i$ is the global minimum and each $i_{k+1}$ is the minimum of the leftover string after the inductive step has been applied at stage $k$ (i.e.\ $i_2 = \tilde{i}$ from the proof).  This bound is sharp, as demonstrated by the family of examples
%\begin{equation*}
%\mx{S} = 1^{a_1 + a_2} 0^{a_2} 1^{a_2 + a_3} 0^{a_3} \dots 1^{a_{r-1} + a_r} 0^{a_r} 1^{a_r + a_{r+1}} 0^{a_1 + a_2 + \dots + a_r + a_{r+1}},
%\end{equation*}
%where the $a_i$ are any positive integers.  For one of these strings, it is easy to show using Lemma \ref{lemma:pairing height} that $\phi(\mx{S}) = (1+a_2)(1+a_3)\cdots(1+a_r).$
%\end{remark}

The preceding inductive proof actually yields a somewhat larger lower bound as follows.  Let
$i_1, \dots, i_{r-1}$ be the minima defined by the inductive process in the proof of
Proposition \ref{prop NC2 lower bound} (i.e. $i_1 = i$ is the global minimum in the
proof and each $i_{k+1}$ is the minimum of the leftover string after the inductive step has been
applied at stage $k$, so $i_2 = \tilde{i}$ from the proof, and so on).  The following
is a strengthening of Proposition \ref{prop NC2 lower bound}.
\begin{proposition}\label{prop:slightlystronger}
	Let $\mx{S}$ be defined as in Proposition \ref{prop NC2 lower bound} and
	$i_1, \dots, i_{r-1}$ be defined as in the preceding paragraph.   Then
\begin{equation*}
	\phi(\mx{S}) \geq (1 + i_{1}) \cdots (1 + i_{r-1}).
\end{equation*}
This bound is sharp, as demonstrated by the family of examples
\begin{equation*}
\mx{S} = 1^{a_1 + a_2} 0^{a_2} 1^{a_2 + a_3} 0^{a_3} \dots 1^{a_{r-1} + a_r} 0^{a_r} 1^{a_r + a_{r+1}} 0^{a_1 + a_2 + \dots + a_r + a_{r+1}},
\end{equation*}
where the $a_i$ are any positive integers.
\end{proposition}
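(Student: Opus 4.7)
The approach splits into two parts: the strengthened lower bound and the sharpness verification. For the strengthened lower bound, I would repeat the induction from Proposition \ref{prop NC2 lower bound} verbatim, only replacing the coarse inequality $\phi(\tilde{\mx{S}}) \geq (1+\tilde{i})^{r-2}$ used there by the strengthened hypothesis $\phi(\tilde{\mx{S}}) \geq \prod_{k=2}^{r-1}(1+i_k)$. The key observation, already present in the original proof, is that the $i_1+1$ choices of $\ell$ each produce the same $(r-1)$-run leftover $\tilde{\mx{S}} = 1^{n_2} 0^{m_2} \cdots 1^{n_r} 0^{m_1+m_r-i_1}$ (after an internal rotation). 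Since by definition the successive minima of $\tilde{\mx{S}}$ are precisely $i_2, \ldots, i_{r-1}$, the induction gives $\phi(\mx{S}) \geq (1+i_1) \prod_{k=2}^{r-1}(1+i_k) = \prod_{k=1}^{r-1}(1+i_k)$, with the base case $r = 1$ trivial (empty product equals $1 = \phi(1^n 0^n)$).

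For sharpness, I would prove by induction on $r$ that the Proposition \ref{prop NC2 lower bound} construction is actually a \emph{bijection} when applied to the example family, so every non-crossing pairing arises uniquely from a choice of $\ell \in \{0, 1, \ldots, i_1\}$ and a non-crossing pairing of the leftover. The base case $r = 2$ is immediate from Proposition \ref{prop low-run formulas}.a, which gives $\phi = 1 + a_2 = 1 + i_1$. For the inductive step, choose $a_{j_0} = \min\{a_2, \ldots, a_r\}$ and use Proposition \ref{prop rotation} and Corollary \ref{corollary:phisymmetry} to produce a rotated/negated string $\mx{S}''$ whose first $1$-block is $1^{a_{j_0}}$. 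I would then perform a height analysis on $\PP(\mx{S}'')$ to confirm that the $a_{j_0}$ heights occupied by this block are matched by $0$s only in the immediately following $0$-block and the final $0$-block, whence the non-crossing constraint admits exactly $a_{j_0}+1$ configurations of the first $1$-block---matching the Proposition \ref{prop NC2 lower bound} construction. In each case the leftover, after rotation, lies in the same family on $r-1$ runs (with the parameter $a_{j_0}$ absorbed into the outer slot), so the inductive hypothesis yields $\phi(\mx{S}) = (1+a_{j_0}) \phi(\tilde{\mx{S}}) = \prod_{j=2}^r(1+a_j)$, matching the lower bound.

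The principal obstacle is the surjectivity half of the sharpness argument: verifying that no non-crossing pairing of $\mx{S}''$ escapes the $a_{j_0}+1$ prescribed configurations. This reduces to showing that the lattice path $\PP(\mx{S}'')$ visits the heights occupied by the first $1$-block only during the first $0$-block and the final $0$-block, never during an interior $0$-run. The essential structural feature making this work is that the final $0$-block has length $a_1 + a_2 + \cdots + a_{r+1}$, which exactly absorbs the accumulated imbalance of the interior runs and forces the path back to its global minimum only at the end. Verifying this height-restriction directly for the family is the main technical burden of the argument.
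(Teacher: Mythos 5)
Your Part~1 (the strengthened lower bound) is correct and is exactly what the paper intends when it says the proof is ``similar to the proof of Proposition~\ref{prop NC2 lower bound}''; the only change needed is the refined induction hypothesis $\phi(\tilde{\mx{S}}) \geq \prod_{k=2}^{r-1}(1+i_k)$, and you identify this correctly.

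In Part~2, however, the step ``the leftover, after rotation, lies in the same family on $r-1$ runs'' fails whenever the minimal parameter $a_{j_0}$ is interior, that is, $j_0 \notin \{2,r\}$. Take $r=4$ and $j_0=3$: removing $0^{a_3}$, pairing its $a_3$ zeros to the flanking $1$-blocks, and merging leaves
\[
1^{a_1+a_2}\,0^{a_2}\,1^{a_2+a_3+a_4}\,0^{a_4}\,1^{a_4+a_5}\,0^{a_1+\cdots+a_5}.
\]
For this to lie in the family on three runs one would need the non-final $0$-blocks $a_2$ and $a_4$ to equal parameters $b_2$ and $b_3$, which forces the middle $1$-block to have exponent $b_2+b_3 = a_2+a_4$; its actual exponent is $a_2+a_3+a_4$, and no rotation, reflection, or negation repairs this. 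The structural property actually preserved by the Proposition~\ref{prop NC2 lower bound} reduction is strictly weaker than membership in the family: it is that the peaks $P_1 < \cdots < P_r$ and the valleys $H_1 < \cdots < H_{r-1}$ of the lattice path are both strictly increasing (the family additionally has $P_j = H_{j+1}$, which is destroyed by the reduction). On that broader class, Lemma~\ref{lemma:pairing height} already forces $\phi(\mx{S}) = \prod_{j<r}(1+m_j)$ with a direct, non-inductive argument: each height is hit by at most two $1$s and at most two $0$s, the interesting heights split into the disjoint overlap intervals between consecutive $1$-blocks, and in each such interval the non-crossing condition reduces the choices to a single threshold of size $m_j+1$. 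That is the reading the paper's terse remark --- that the second claim ``follows from an application of Lemma~\ref{lemma:pairing height}'' --- supports. So your instinct that the heights are the decisive tool is right, but either the induction must be carried out over the broader increasing-peaks-and-valleys class, or it should be replaced by the direct height argument; as written the family-closure step does not hold.
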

\noindent The proof is similar to the proof of Proposition \ref{prop NC2 lower bound} and the
second claim follows from an application of Lemma \ref{lemma:pairing height}.

%The inductive proof of Proposition \ref{prop NC2 lower bound} yields a slightly
%stronger result as follows.  Define a sequence $i_{r-1}, i_{r-2}, \dots i_1$ with
%$i_{j+1} \leq i_j$ for $1 \leq j \leq r-1$, by setting
%$A_r = \{n_1, m_1, \dots, n_r, m_r\}$ and for $A_t = \{\tilde{n}_1, \tilde{m}_1,
%\dots, \tilde{n}_t, \tilde{m}_t\}$, where $\tilde{n}_1$ is minimal, define $i_{t-1} =
%\textnormal{min}\; A_t$ and $A_{t-1} = \{\tilde{n}_2, \tilde{m}_2, \dots,
%\tilde{n}_r, \tilde{m}_1 + \tilde{m}_r - i_{t-1}\}$ (i.e.
%$i_{r-1} = i$ is the global minimum and each $i_{r-1-k}$ is the minimum of the
%leftover string after the inductive step has been applied at stage $k$, so
%$i_{r-2} = \tilde{i}$, etc.).

\medskip
In the other direction, we prove a simple upper bound (which is not sharp in general).
\begin{proposition} \label{prop upper bound on NC_2} Let $\mx{S}$ be a balanced string with lattice path height $h = h(\mx{S})$ and $r$ runs.  Then
\begin{equation} \label{eqn Fuss-Catalan upper bound} \phi(\mx{S}) \le C^{(h)}_r \le \frac{r^{r-1}}{r!}\,(1+h)^{r-1}. \end{equation}
\end{proposition}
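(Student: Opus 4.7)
I will prove the two inequalities in turn.

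For the second, I use the explicit formula from Proposition \ref{prop Fuss-Catalan}:
\begin{equation*}
C^{(h)}_r = \frac{((h+1)r)!}{r!\,(hr+1)!} = \frac{1}{r!}\prod_{j=2}^{r}(hr+j).
\end{equation*}
Each factor satisfies $hr + j \le hr + r = r(h+1)$ for $2 \le j \le r$, which yields $C^{(h)}_r \le r^{r-1}(1+h)^{r-1}/r!$ immediately.

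The first inequality $\phi(\mx{S}) \le \phi((1^h 0^h)^r) = C^{(h)}_r$ is the substantive content. My plan is to construct an injection $NC_2(\mx{S}) \hookrightarrow NC_2((1^h 0^h)^r)$, viewing $(1^h 0^h)^r$ as a ``maximal extension'' of any balanced bit string with $r$ runs and height at most $h$. For $\mx{S} = 1^{n_1}0^{m_1}\cdots 1^{n_r}0^{m_r}$ with $n_j, m_j \le h$, embed $\mx{S}$ into $(1^h 0^h)^r$ by aligning the $j$-th run of $\mx{S}$ with the $j$-th block $1^h 0^h$ of $(1^h 0^h)^r$: each original 1 (resp.\ 0) of $\mx{S}$'s $j$-th run is mapped to the position in the $j$-th block at the matching height (per Definition \ref{definition:height}), and the remaining $h - n_j$ and $h - m_j$ positions receive inserted 1s and 0s. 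Given $\pi \in NC_2(\mx{S})$, extend $\pi$ to $\tilde\pi \in NC_2((1^h 0^h)^r)$ by pairing inserted characters canonically (with outermost-first nesting) at each new height. Since $\tilde\pi$ restricted to $\mx{S}$'s embedded positions recovers $\pi$, the map $\pi \mapsto \tilde\pi$ is injective, giving $\phi(\mx{S}) \le \phi((1^h 0^h)^r) = C^{(h)}_r$.

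The main obstacle is verifying that $\tilde\pi$ is globally non-crossing. At each height $k$, the $(10)^r$ pattern of $(1^h 0^h)^r$ decomposes into $2r_k$ positions coming from $\mx{S}$ (where $r_k$ is the number of runs of $\mx{S}$ reaching height $k$) paired according to $\pi$ by Lemma \ref{lemma:pairing height}, together with $2(r - r_k)$ inserted positions that must be paired among themselves. The resolution is that inserted pairs at height $k$ generally span across multiple blocks of $(1^h 0^h)^r$: the canonical nesting matches inserted 1s and 0s at each height in the unique non-crossing way that is nested consistently with embedded pairs of $\pi$ at height $k$ and with inserted pairs at higher heights. Rigorously justifying this---showing that the height-preserving embedding exists and that a compatible canonical nesting can always be constructed by induction from $k=h$ down to $k=1$---is the principal technical step of the proof.
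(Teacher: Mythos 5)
You handle the Fuss--Catalan estimate correctly. For the first inequality, the height-preserving injection you describe is exactly what the paper uses, and your observation that restricting $\tilde\pi$ to the embedded positions recovers $\pi$ does establish injectivity once the map is known to be well-defined. The genuine gap is verifying that $\tilde\pi$ is non-crossing; you flag this as the ``principal technical step'' but do not close it, and the plan you propose for closing it rests on a misconception: inserted pairs do \emph{not} ``generally span across multiple blocks'' of $(1^h 0^h)^r$, and no top-down induction over heights is required.

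What makes the verification immediate is the local structure of the inserted positions in $\mx{T} = (1^h 0^h)^r$. They form exactly $2r$ maximal contiguous runs, one per peak and one per valley of the (circular) lattice path of $\mx{S}$. At a peak of height $a$ in block $j$, the inserted run is $1^{h-a}0^{h-a}$, lying between the last embedded $1$ of block $j$ and the first embedded $0$ of block $j$ (both at height $a$). At a valley of height $b$ between blocks $j$ and $j+1$, the inserted run is $0^{b-1}1^{b-1}$, lying between the last embedded $0$ of block $j$ and the first embedded $1$ of block $j+1$ (both at height $b$). Each such run sits in an open interval containing no embedded positions, so its internal nested pairing can cross neither an embedded pair nor an inserted pair from any other run, and every inserted pair lives within a single block (peak case) or straddles exactly two adjacent blocks (valley case) --- never more. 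Since the embedding is order-preserving, the embedded portion of $\tilde\pi$ inherits non-crossingness from $\pi$, so $\tilde\pi$ is globally non-crossing with no height-by-height bookkeeping at all. This contiguous-run decomposition is the observation your write-up is missing.
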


\begin{proof} The proof relies on the following simple injection of pairings on $\mx{S}$ to pairings on $\mx{T} = (1^h0^h)^r$.  In $\mx{S}$, a run $1^{n_k}$ has heights $a, a + 1, \dots, a + n_k - 1$ where $h_i = a$ and all heights are in the range $[1, h]$.  The $k$-th run of $1$s in $\mx{T}$ hits every height $1, \dots, h$, and thus we use the {\em height-preserving map} from $\mx{S}$ to $\mx{T}$ (the situation for runs of $0$s is identical).  Furthermore, we preserve the pairings of $\mx{S}$ when injecting into $\mx{T}$.  If a run $1^{n_1}$ in $S$ ends at position $i$ with $h_i = a$, then the following run of $0$s also begins at the same height $h_{i+1} = a$.  This leaves excess bits $1^{h-a} 0^{h-a}$ in $\mx{T}$ at heights $a + 1, \dots, h$, which we pair locally.

\medskip

This gives the inclusion, and the first inequality then follows from Proposition \ref{prop Fuss-Catalan}.  The second inequality is an elementary rough estimate of the Fuss-Catalan number, which is left to the reader.  Figure \ref{fig proof of lemma upper bound} demonstrates the inclusion.  \end{proof}
\begin{figure}[htbp]
\begin{center}
\includegraphics{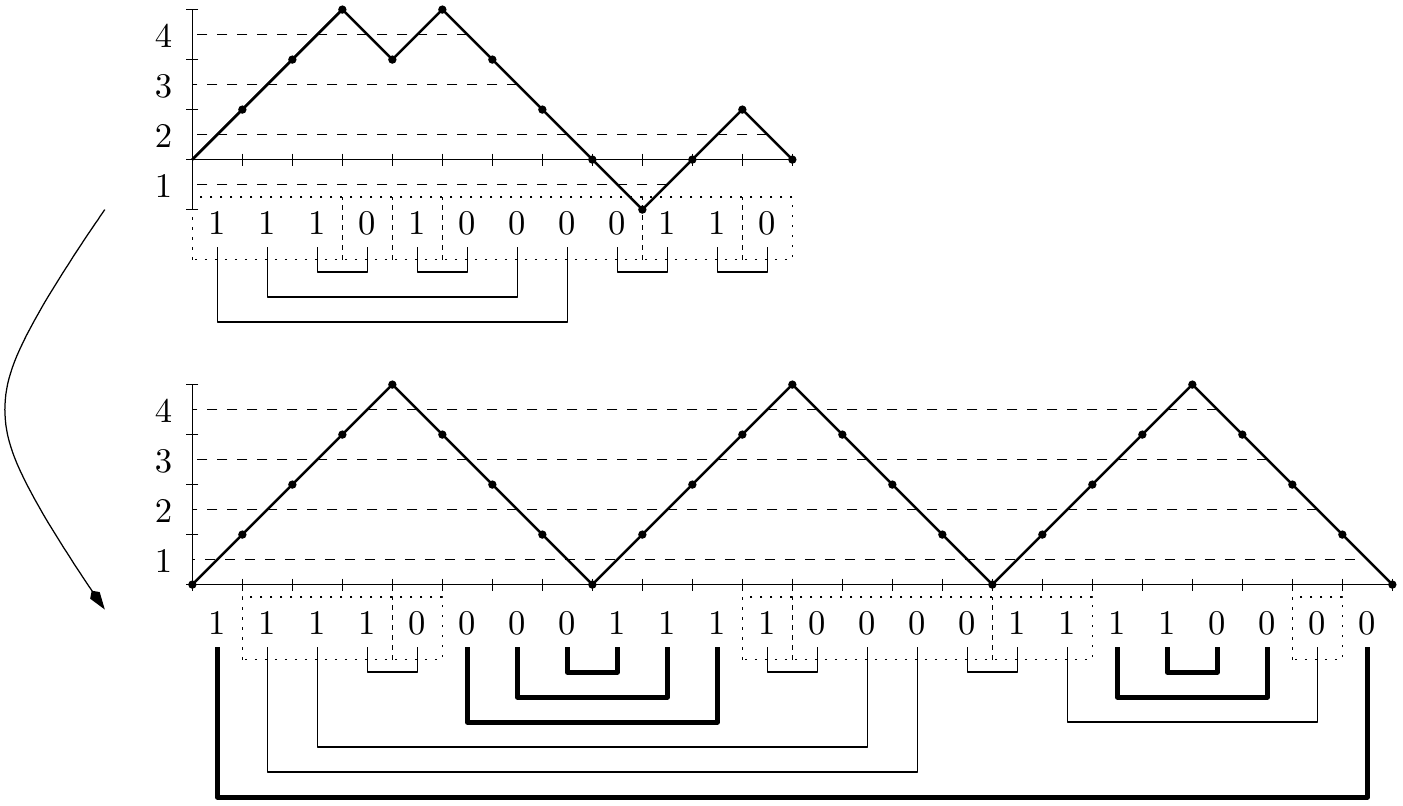}
\caption{ $\mx{S}$ is injected into $(1^h0^h)^r$, with extraneous labels (dark lines) paired locally.}
\label{fig proof of lemma upper bound}
\end{center}
\end{figure}

Note that the lattice path height is the smallest $h$ that can be used in the proof of Proposition \ref{prop upper bound on NC_2}, since all labels appearing in $\mathscr{P}(\mx{S})$ must be present in $\mathscr{P}\left((1^h0^h)^r\right)$.  Unfortunately, $h(\mx{S})$ can be quite large in comparison to the average (or even maximum) block size in $\mx{S}$: consider the string $(1^{k}0)^{\ell}(10^{k})^{\ell}$.  The maximum block size is $k$, while the lattice path height is $(k-1)\ell + 1$.  Indeed, this string has length $2(k+1)\ell$, and the height is nearly half the total length.  In general, a string of length $2n$ with $r$ runs can have height $n-r+1$, so the following corollary is essentially the best that can be said using height considerations.

\begin{corollary} \label{cor upper bound NC_2} Let $\mx{S}$ be a balanced string of
	length $2n$ ($n$ $1$s and $n$ $0$s), with $r$ runs.  Then
\begin{equation}\label{eqn upper bound NC_2} \phi(\mx{S}) \le \frac{r^{r-1}}{r!}\, (1+n)^{r-1}. \end{equation}
\end{corollary}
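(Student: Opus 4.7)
The claim is a direct consequence of Proposition \ref{prop upper bound on NC_2} once one observes that the lattice path height $h(\mx{S})$ is at most $n$. That proposition already gives $\phi(\mx{S}) \le \tfrac{r^{r-1}}{r!}(1+h(\mx{S}))^{r-1}$, and since $(1+t)^{r-1}$ is non-decreasing in $t \ge 0$, the estimate $h(\mx{S}) \le n$ immediately upgrades this to $\phi(\mx{S}) \le \tfrac{r^{r-1}}{r!}(1+n)^{r-1}$, which is exactly the stated bound.

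To verify $h(\mx{S}) \le n$, I would just unpack the definition of height as the oscillation $M - m$ of the lattice-path values $Y_0, Y_1, \ldots, Y_{2n}$. Letting $i_m$ and $i_M$ be indices at which the min $m$ and the max $M$ are attained, the signed net change of $Y$ along the subpath joining $i_m$ to $i_M$ has absolute value $M - m$; in particular, either the number of up-steps or the number of down-steps in that subpath is at least $M - m$. Since $\mx{S}$ has exactly $n$ ones and $n$ zeros in total, this forces $M - m \le n$. The paragraph immediately preceding the corollary remarks that the sharper estimate $h(\mx{S}) \le n - r + 1$ is in fact available (and tight), but the weaker bound $h(\mx{S}) \le n$ already suffices for the stated claim.

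There is essentially no obstacle: once the trivial height estimate is noted, the corollary follows in one line from Proposition \ref{prop upper bound on NC_2} by monotonicity in $h$. The only point worth emphasizing is that the height bound in terms of $n$ alone, rather than $n$ and $r$ together, is enough to reach the form $(1+n)^{r-1}$ appearing in the statement.
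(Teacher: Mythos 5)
Your proof is correct and matches the paper's (implicit) argument: the corollary follows from Proposition~\ref{prop upper bound on NC_2} together with the elementary bound $h(\mx{S}) \le n$, which you justify correctly via the oscillation of the lattice path. The paper gives no separate proof for the corollary, relying on exactly this combination, and even notes before the statement that height considerations cannot do substantially better since strings with $r$ runs can attain height $n-r+1$.
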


\begin{remark} The bound in Corollary \ref{cor upper bound NC_2} is quite large and never actually achieved, but it has the correct asymptotic behaviour in $r$ for fixed $n$.  Theorem \ref{Main Theorem} essentially states that $n$ may be replaced with $n/r$ in this corollary. \end{remark}

% Section 3
\section{Expansions for $\phi^*$}
\label{section:phi*}

As seen in Proposition \ref{prop low-run formulas}, exact formulas for $\phi(n_1, m_1, \dots, n_r, m_r)$ are significantly simpler when each $n_i = m_i$.  In this section we show that the general problem of finding upper bounds for $\phi$ can be replaced by the easier problem of finding upper bounds for the {\it symmetrized pairing function} $\phi^*$, which is defined as
\begin{equation}
\phi^*(n_1, n_2, \dots, n_r) := \phi(n_1, n_1, n_2, n_2, \dots, n_r, n_r).
\end{equation}
As before, we also set $\phi^*(\emptyset) = 1$ for technical reasons.  We also introduce a tree structure that leads to an exact formula for $\phi^*$, although some of the most important properties of the formula remain conjectural.

\subsection{Reduction to $\phi^\ast$}

Recall the recursion formula for $\phi$ from Theorem \ref{theorem recurrence},
\begin{align*}
\phi&(n_1, m_1, \dots, n_r, m_r) \\
& = \sum_{k=1}^r \phi(n_1 - 1, m_1, \dots, n_k, m_k - d_k - 1) \cdot
\phi(d_k, n_{k+1}, m_{k+1}, \dots, n_r, m_r),
\end{align*}
where $d_k = -(n_1 + \dots + n_r) + (m_1 + \dots + m_r).$  Using the rotational
symmetries of $\phi$ from Corollary \ref{corollary:phisymmetry}, we may as usual
assume that $n_1$ is minimal among $\{n_1, m_1, \dots, n_r, m_r\}$.  The following proposition combines these tools into another useful symmetry for $\phi$.

\begin{proposition}
\label{proposition:aphiIdentity}
If $n_1$ is minimal, then
\begin{equation*}
\phi(n_1 - a, n_1 - 1, n_2, n_2, \dots, n_r, n_r - (a-1)) = \phi(n_1 -
a, n_1, n_2, n_2, \dots, n_r, n_r - a).
\end{equation*}
\end{proposition}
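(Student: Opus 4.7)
The plan is to apply the multivariate recurrence of Theorem \ref{theorem recurrence} to both sides of the claimed identity and show that the resulting expansions agree term by term, using strong induction on the common length $L = 2(n_1 + \cdots + n_r - a)$ of the two underlying bit strings. Writing $\mx{S}_L$ and $\mx{S}_R$ for the strings on the left- and right-hand sides, a short computation of the offsets from Theorem \ref{theorem recurrence} yields $d_k^L = a - 1$ and $d_k^R = a$ for $1 \le k \le r - 1$, with $d_r^L = d_r^R = 0$. The recurrence thus gives $\phi(\mx{S}_L) = \sum_{k=1}^r I_k^L O_k^L$ and $\phi(\mx{S}_R) = \sum_{k=1}^r I_k^R O_k^R$, so it will suffice to verify $I_k^L O_k^L = I_k^R O_k^R$ for each $k$.

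For the outer factors with $1 \le k \le r - 1$ one computes
\[
O_k^L = \phi(a - 1,\, n_{k+1}, n_{k+1},\, \ldots,\, n_r,\, n_r - a + 1), \qquad O_k^R = \phi(a,\, n_{k+1}, n_{k+1},\, \ldots,\, n_r,\, n_r - a),
\]
which under the conventions (\ref{equation:phiintegers}) correspond to the bit strings $0^{a-1} 1^{n_{k+1}} 0^{n_{k+1}} \cdots 1^{n_r} 0^{n_r - a + 1}$ and $0^a 1^{n_{k+1}} 0^{n_{k+1}} \cdots 1^{n_r} 0^{n_r - a}$ respectively. The second is exactly the one-position rotation of the first (moving the trailing $0$ to the front), so Proposition \ref{prop rotation} will immediately give $O_k^L = O_k^R$. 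The boundary cases collapse trivially: $O_r^L = O_r^R = 1$, and $I_1^L = I_1^R = \phi(n_1 - a - 1,\, n_1 - a - 1) = 1$ by Proposition \ref{prop low-run formulas}.a.

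For the inner factors with $2 \le k \le r$ the direct computation produces
\[
I_k^L = \phi\bigl(n_1 - (a + 1),\; n_1 - 1,\; n_2, n_2,\; \ldots,\; n_k,\; n_k - ((a + 1) - 1)\bigr),
\]
\[
I_k^R = \phi\bigl(n_1 - (a + 1),\; n_1,\; n_2, n_2,\; \ldots,\; n_k,\; n_k - (a + 1)\bigr),
\]
which are exactly the two sides of the very proposition being proved, applied to the shorter parameter list $(n_1, \ldots, n_k)$ with $a$ replaced by $a + 1$. Minimality of $n_1$ is inherited from the original hypothesis, and the associated string length drops to at most $L - 2 < L$, so the inductive hypothesis will supply $I_k^L = I_k^R$. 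I plan to take the base case to be $a = n_1$ (equivalently $n_1 - a = 0$): both $\mx{S}_L$ and $\mx{S}_R$ then reduce via (\ref{equation:phiintegers}) to bit strings whose cyclic block patterns coincide, since their leading and trailing $0$-runs merge into a single run of size $n_r$, and Proposition \ref{prop rotation} gives the equality directly. The step I expect to require the most care is the translation between the arithmetic form of $\phi$ and its bit-string representation: one must verify scrupulously that the inner factor at $k = r$ really is a strictly smaller instance of the same proposition (with the minimality hypothesis intact) and that the outer pair are genuine one-position rotations of each other under the conventions, so that the term-by-term induction really closes up.
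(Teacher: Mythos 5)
Your proposal is correct and follows essentially the same strategy as the paper's proof: both expand via Theorem~\ref{theorem recurrence}, reduce term-by-term to strictly smaller instances of the same identity (via the observation that each inner factor $I_k$ is the proposition restated with parameter list $(n_1,\dots,n_k)$ and shift $a+1$), identify the outer factors by a cyclic rotation that collapses them to $\phi^*(n_{k+1},\dots,n_r)$, and handle the degenerate case $n_1 - a = 0$ directly by merging the leading and trailing $0$-runs. The only cosmetic differences are that you induct on total string length rather than on $n_1 - a$, and you expand both sides of the identity and match summands rather than expanding only the left side and then invoking the recurrence ``in reverse'' as the paper does; these are logically equivalent.
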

\begin{remark}
Our proof of this identity is strictly algebraic.  It is an open problem to find a combinatorial proof that directly relates the two sets of noncrossing pairings.
\end{remark}
\begin{proof}
The proof is by induction on $n_1 - a.$  For the base case, suppose that $n_1 - a =
0.$   The left-hand side of the equality is then (using (\ref{equation:phiintegers}))
\begin{equation*}
\phi(0, n_1 - 1, n_2, n_2, \dots, n_r, n_r - (n_1 - 1)) = \phi^*(n_2,
\dots, n_r),
\end{equation*}
and the right-hand side is similarly
\begin{equation*}
\phi(0, n_1, n_2, n_2, \dots, n_r, n_r - n_1) = \phi^*(n_2, \dots,
n_r).
\end{equation*}

Now suppose that $n_1 - a \geq 1.$  Note that the values of $d_k$ when Theorem
\ref{theorem recurrence} is applied to the left hand side of the proposition statement are
particularly simple, as $d_k = -(a-1)$ for all $k<r$.  Thus we have the expansion
\begin{align}
\phi&(n_1 - a, n_1 - 1, n_2, n_2, \dots, n_r, n_r - (a-1)) \\
& = \sum_{k=1}^r \phi(n_1-(a+1), n_1 - 1, n_2, n_2, \dots, n_k, n_k -
a) \notag \\
& \qquad \qquad \times \phi(a-1, n_{k+1}, n_{k+1}, \dots, n_r, n_r - (a-1)) \notag \\
& = \sum_{k=1}^r \phi(n_1-(a+1), n_1 - 1, n_2, n_2, \dots, n_k, n_k -
a) \cdot \phi^*(n_{k+1}, \dots, n_r), \notag
\end{align}
where the second equality again uses (\ref{equation:phiintegers}).  We now apply the inductive hypothesis to the first terms in the summands and obtain
\begin{align}
\sum_{k=1}^r & \phi(n_1-(a+1), n_1, n_2, n_2, \dots, n_k, n_k -
(a+1)) \cdot \phi^*(n_{k+1}, \dots, n_r) \\
& = \phi(n_1-a, n_1, n_2, n_2, \dots, n_r, n_r - a), \notag
\end{align}
where we have applied Theorem \ref{theorem recurrence} in reverse to evaluate the sum.  Furthermore, there are no summands that unexpectedly vanish, since the condition that $n_1$ is minimal guarantees that $n_i - a \geq 0$ for all $i.$
\end{proof}

The $a=0$ case of this equality arises in the proof of the following recursive formula for $\phi^*$.
\begin{theorem}
\label{theorem:phi*Recurrence}
If $n_1$ is minimal, then
\begin{equation*}
\phi^*(n_1, n_2, \dots, n_r) = \sum_{i=1}^r \phi^*(n_1 - 1, n_2, \dots, n_i) \cdot
\phi^*(n_{i+1}, \dots, n_r).
\end{equation*}
\end{theorem}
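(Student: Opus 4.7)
The plan is to derive the identity directly from Theorem \ref{theorem recurrence}, exploiting the fact that the symmetric input $\phi^*(n_1,\ldots,n_r) = \phi(n_1,n_1,n_2,n_2,\ldots,n_r,n_r)$ forces a major simplification in the $d_k$ values, and then to use Proposition \ref{proposition:aphiIdentity} to rewrite the resulting factors in $\phi^*$-form.

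First I would apply Theorem \ref{theorem recurrence} to $\phi(n_1,n_1,\ldots,n_r,n_r)$. Because $n_j = m_j$ for each $j$, the sequence of discrepancies satisfies $d_k = 0$ for every $k$. The recurrence therefore collapses to
\begin{equation*}
\phi^*(n_1,\ldots,n_r) = \sum_{k=1}^r \phi(n_1-1, n_1, n_2, n_2, \ldots, n_k, n_k-1)\cdot \phi(0, n_{k+1}, n_{k+1}, \ldots, n_r, n_r).
\end{equation*}
Using the conventions in (\ref{equation:phiintegers}), the second factor simplifies immediately to $\phi(n_{k+1}, n_{k+1}, \ldots, n_r, n_r) = \phi^*(n_{k+1},\ldots,n_r)$, so the target identity reduces to identifying the first factor with $\phi^*(n_1-1,n_2,\ldots,n_k)$.

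This is precisely the content of Proposition \ref{proposition:aphiIdentity} with $a=1$, applied not to the full string of length $r$ but to its truncation at the $k$-th block. Specifically, for each fixed $k$, the parameters $(n_1, n_2, \ldots, n_k)$ still have $n_1$ minimal (since $n_1$ was minimal among all the original $n_i$, it is a fortiori minimal among any subset containing it). So the proposition yields
\begin{equation*}
\phi(n_1-1, n_1-1, n_2, n_2, \ldots, n_k, n_k) = \phi(n_1-1, n_1, n_2, n_2, \ldots, n_k, n_k - 1),
\end{equation*}
and the left side is by definition $\phi^*(n_1-1, n_2, \ldots, n_k)$. Substituting this identification back into the simplified recurrence produces exactly the claimed formula.

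The only step requiring care is the application of Proposition \ref{proposition:aphiIdentity} to the truncations: one must verify the minimality hypothesis is inherited, which is immediate, and confirm that no degenerate summand (e.g.\ when $k=1$) is mishandled. For $k=1$ the identity reads $\phi(n_1-1, n_1-1) = \phi(n_1-1, n_1, n_1-1)$ which should collapse to $\phi^*(n_1-1) = \phi(n_1-1,n_1-1)$ after reducing the $0$-block via the second line of (\ref{equation:phiintegers}); this boundary case is consistent but worth checking explicitly. Beyond this bookkeeping, there is no substantive obstacle, since the real work has been pushed into Theorem \ref{theorem recurrence} and Proposition \ref{proposition:aphiIdentity}.
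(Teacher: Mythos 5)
Your proposal is correct and follows exactly the paper's own route: apply Theorem \ref{theorem recurrence}, note that $d_k=0$ in the symmetric case, and then invoke Proposition \ref{proposition:aphiIdentity} (with $a=1$) to turn $\phi(n_1-1,n_1,n_2,n_2,\ldots,n_i,n_i-1)$ into $\phi^*(n_1-1,n_2,\ldots,n_i)$. The only small slip is in your $k=1$ check, where the first factor is already just $\phi(n_1-1,n_1-1)=\phi^*(n_1-1)$, so no extra reduction is needed; this does not affect the argument.
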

\begin{proof}
When Theorem \ref{theorem recurrence} is applied to $\phi^*(n_1, \dots, n_r),$ we
have $d_k = 0$ for all $k<r$.
Therefore,
\begin{align*}
\phi^*(n_1, n_2, \dots, n_r) & = \sum_{i=1}^r \phi(n_1 - 1, n_1, n_2,
n_2, \dots, n_i, n_i-1)\cdot
\phi^*(n_{i+1}, \dots, n_r) \\
& = \sum_{i=1}^r \phi(n_1 - 1, n_1 - 1, n_2, n_2, \dots, n_i, n_i)\cdot
\phi^*(n_{i+1}, \dots, n_r),
\end{align*}
where the second equality utilizes Proposition \ref{proposition:aphiIdentity}.
\end{proof}

We now have the necessary tools to prove an important inequality between $\phi$ and $\phi^*$.
\begin{theorem}
\label{theorem:phiphi*}
If $n_1$ is minimal, then
\begin{equation*}
\phi(n_1, m_1, n_2, m_2, \dots, n_r, m_r) \leq \phi^*(n_1, n_2, \dots,
n_r).
\end{equation*}
\end{theorem}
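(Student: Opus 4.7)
The plan is to induct on the common sum $N = n_1 + \cdots + n_r$, comparing the two recurrences at hand term-by-term. The base case $N=1$ reduces to $\phi(1,1) = 1 = \phi^*(1)$. For the inductive step, Theorem~\ref{theorem recurrence} expands $\phi(n_1, m_1, \ldots, n_r, m_r) = \sum_{k=1}^r A_k B_k$ with $A_k = \phi(n_1-1, m_1, \ldots, n_k, m_k - d_k - 1)$ and $B_k = \phi(d_k, n_{k+1}, \ldots, n_r, m_r)$, while Theorem~\ref{theorem:phi*Recurrence}, which applies because $n_1$ is in particular minimal over $\{n_1, \ldots, n_r\}$, expands $\phi^*(n_1, \ldots, n_r) = \sum_{k=1}^r A_k^* B_k^*$ with $A_k^* = \phi^*(n_1-1, n_2, \ldots, n_k)$ and $B_k^* = \phi^*(n_{k+1}, \ldots, n_r)$. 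The strategy is to establish $A_k B_k \leq A_k^* B_k^*$ for every $k$ and then sum.

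The $B_k$ factor is easy: the convention of (\ref{equation:phiintegers}) allows the rewriting $B_k = \phi(0, d_k, n_{k+1}, m_{k+1}, \ldots, n_r, m_r)$, whose leading argument $0$ is automatically the minimum of the entries. The inductive hypothesis, applicable since this string is strictly shorter than $\mx{S}$ (as $n_1 \geq 1$), yields $B_k \leq \phi^*(0, n_{k+1}, \ldots, n_r) = \phi^*(n_{k+1}, \ldots, n_r) = B_k^*$. For the $A_k$ factor, the inductive hypothesis gives $A_k \leq A_k^*$ directly whenever $n_1 - 1$ is the minimum of the entries of $A_k$; since every interior entry equals some $m_j$ or $n_j$ ($j \geq 2$) and thus exceeds $n_1 - 1$, the only candidate competing with $n_1 - 1$ for the minimum is $m_k - d_k - 1$. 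A short algebraic check then handles the boundary: for $k=1$ one has $m_1 - d_1 - 1 = n_1 - 1$ exactly, and for $k=r$ one has $m_r - d_r - 1 = m_r - 1 \geq n_1 - 1$ by the minimality of $n_1$.

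The main obstacle is the intermediate range $1 < k < r$ where the arithmetic condition $n_2 + \cdots + n_k < m_1 + \cdots + m_{k-1}$ is consistent with the minimality of $n_1$ and forces $\mu := m_k - d_k - 1 < n_1 - 1$, so that $n_1 - 1$ is no longer minimal in $A_k$'s arguments. The plan here is to rotate the $A_k$-tuple cyclically, using Corollary~\ref{corollary:phisymmetry}, to bring $\mu$ to the first slot; a direct bookkeeping shows that the rotated tuple's ``$n$-values'' are $(\mu, m_1, m_2, \ldots, m_{k-1})$, so the inductive hypothesis produces the bound $A_k \leq \phi^*(\mu, m_1, \ldots, m_{k-1})$. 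It remains to compare this $\phi^*$-value with $A_k^* = \phi^*(n_1-1, n_2, \ldots, n_k)$. Both tuples have the same total $N' = (n_1-1) + n_2 + \cdots + n_k$, and I expect the desired comparison to follow from the strict slack $n_1 - 1 - \mu \geq 1$ combined with a monotonicity-type identity derived from Proposition~\ref{proposition:aphiIdentity}; this comparison between two $\phi^*$-values is the crux of the argument and where I anticipate the most technical work. Once the term-by-term inequalities are established, summing them gives $\phi(n_1, m_1, \ldots, n_r, m_r) \leq \phi^*(n_1, \ldots, n_r)$ and completes the induction.
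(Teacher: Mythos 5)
Your proposal follows the paper's overall strategy (induction on $N=n_1+\cdots+n_r$, applying Theorem~\ref{theorem recurrence} to $\phi$ and Theorem~\ref{theorem:phi*Recurrence} to $\phi^*$, and comparing the two sums term by term), and your handling of the $B_k$ factor via the convention $B_k = \phi(0,d_k,n_{k+1},\ldots,n_r,m_r)$ is a nice clean observation. You have also noticed a real subtlety that the paper's proof leaves implicit: the induction hypothesis is invoked on $A_k = \phi(n_1-1,m_1,\ldots,n_k,m_k-d_k-1)$ without any visible check that $n_1-1$ is minimal among \emph{all} its arguments, and indeed it need not be (e.g.\ $(\n,\m)=((3,3,4),(4,3,3))$, $k=2$, gives $A_2=\phi(2,4,3,1)$ with $m_k-d_k-1=1<2=n_1-1$).

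However, the fix you propose --- rotate $A_k$ to bring $\mu:=m_k-d_k-1$ to the front, obtain $A_k\le\phi^*(\mu,m_1,\ldots,m_{k-1})$, and then try to prove $\phi^*(\mu,m_1,\ldots,m_{k-1})\le\phi^*(n_1-1,n_2,\ldots,n_k)$ --- is the wrong route, and would not finish with the tools available at this point of the paper. Comparing two $\phi^*$-values of different tuples with the same sum is exactly the hard problem that occupies Section~\ref{section:MainProof} (and parts of it remain conjectural; see Conjecture~\ref{conjecture:pTtau}). Even the simplest such comparison, Corollary~\ref{corollary:phi*ineq}, is proved only via the injection into labeled trees and Catalan words, i.e.\ machinery developed strictly \emph{after} Theorem~\ref{theorem:phiphi*}. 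Proposition~\ref{proposition:aphiIdentity}, which you mention as a possible tool, does not supply this kind of monotonicity.

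The intended resolution is much simpler and does not require any $\phi^*$-versus-$\phi^*$ comparison. The statement should be read (or re-proved) with the weaker hypothesis that $n_1$ is minimal only among $\{n_1,\ldots,n_r\}$; equivalently, since both $\phi(\n,\m)$ and $\phi^*(\n)$ are invariant under simultaneous cyclic rotation of $\n$ and $\m$ (Corollary~\ref{corollary:phisymmetry} plus the definition of $\phi^*$), the inequality $\phi(\n,\m)\le\phi^*(\n)$ holds for \emph{all} balanced pairs $(\n,\m)$, and the ``$n_1$ minimal'' condition is purely a normalization. Carrying this stronger statement through the induction, the application to $A_k$ is immediate: the relevant condition is $n_1-1\le n_j$ for $2\le j\le k$, which holds automatically since $n_1\le n_j$; the value of $m_k-d_k-1$ is irrelevant. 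The last step, Theorem~\ref{theorem:phi*Recurrence}, still applies because $n_1$ is minimal among the $n_j$. This is what the paper's ``The induction hypothesis now implies\ldots'' tacitly assumes; your term-by-term plan is correct, but you should strengthen the induction hypothesis rather than attempt the $\phi^*$-comparison.

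(One small bookkeeping note: in the paper's write-up of this proof the sign of $d_i$ is flipped relative to its definition in Theorem~\ref{theorem recurrence}; you have used the definition's sign consistently, which is correct.)
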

\begin{proof}
The proof is by induction on $n_1 + n_2 + \dots + n_r$.  The only necessary base case
is $\phi(0,0) = 1 = \phi^*(0).$

Now suppose that $n_1 + \dots + n_r \geq 1.$  Theorem \ref{theorem recurrence} once again gives
\begin{align}
\label{eq:phiInequality1}
\phi(&n_1, m_1, n_2, m_2, \dots, n_r, m_r) \notag \\
& = \sum_{i=1}^r \phi(n_1 - 1, m_1, \dots, n_i, m_i + d_i - 1) \cdot
\phi(-d_i, n_{i+1}, m_{i+1}, \dots, n_r, m_r) \notag \\
& \leq \sum_{i=1}^r \phi(n_1 - 1, m_1, \dots, n_i, m_i + d_i - 1) \cdot
\phi(n_{i+1}, m_{i+1}, \dots, n_r, m_r - d_i) \notag
\end{align}
The inequality is due to (\ref{equation:phiintegers}), which implies that
\begin{equation*}
\phi(a, n_1, m_1, \dots, n_r, m_r) \leq \phi(n_1, m_1, \dots, n_r, m_r + a);
\end{equation*}
the left side is $0$ if $a$ is negative, and otherwise there is equality.  The induction hypothesis now implies that
\begin{align}
\phi(n_1, m_1, \dots, n_r, m_r) & \leq \sum_{i=1}^r \phi^*(n_1 - 1, n_2, \dots, n_i) \cdot
\phi^*(n_{i+1}, \dots, n_r) \notag \\
& = \phi^*(n_1, n_2, \dots, n_r), \notag
\end{align}
and the last equality uses Theorem \ref{theorem:phi*Recurrence}.
\end{proof}
\begin{remark*}
It is an open problem to prove this inequality combinatorially by finding an injection of noncrossing pairings.
\end{remark*}

The inequality in Theorem \ref{theorem:phiphi*} implies that upper bounds for $\phi^*$ also serve as upper bounds for $\phi.$  Therefore, the remainder of this section is devoted to achieving a better understanding of $\phi^*$.

\subsection{One-term recurrence for $\phi^\ast$}

Although we could directly use Theorem \ref{theorem:phi*Recurrence} to recursively compute values of $\phi^*(n_1, n_2, \dots, n_r)$, this would be very inefficient as it would require as many as $n_1 + n_2 + \dots + n_r$ recursive calls.  We can obtain a more useful formula that requires only $r$ recursive calls by ``unwinding'' the formula for $\phi^*$.

\begin{defn}
\label{defn:phi*S}
If $S = \{i_1, \dots, i_s\} \subset [1,r-1],$ with $i_1 \leq \dots \leq
i_s,$ then
\begin{equation*}
\phi_S^*(n_2, \dots, n_r) := \displaystyle \prod_{j = 1}^{s-1}
\phi^*(n_{i_j + 1}, \dots, n_{i_{j+1}})\cdot \phi^*(n_{i_s + 1}, \dots,
n_r).
\end{equation*}
\end{defn}

The subsets that we consider will be required to contain $1$, so for $S \subset [2,r],$ we adopt the notation $S_1 := S \cup \{1\}$.

\begin{theorem}
\label{theorem:phi*Recurrence1}
If $n_1$ is minimal, then $\phi^*(n_1) = 1, \phi^*(n_1, n_2) = 1 + n_1$ and for $r \geq 3,$
\begin{equation*}
\phi^*(n_1,n_2, \dots, n_r) = \sum_{S \subset [2,r-1]}
\binom{n_1+1}{|S_1|} \phi_{S_1}^*(n_2, \dots, n_r).
\end{equation*}
\end{theorem}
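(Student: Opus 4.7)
The plan is to prove this by induction on $n_1$, repeatedly applying the one-step recurrence from Theorem \ref{theorem:phi*Recurrence} and regrouping the resulting sum via Pascal's identity $\binom{n_1+1}{k} = \binom{n_1}{k} + \binom{n_1}{k-1}$.

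For the base case $n_1 = 0$, I use $\phi^*(0, n_2, \dots, n_r) = \phi^*(n_2, \dots, n_r)$ from (\ref{equation:phiintegers}). On the right-hand side, $\binom{1}{|S_1|}$ vanishes unless $|S_1| = 1$, forcing $S = \emptyset$ and $S_1 = \{1\}$, and the sole surviving term is $\phi_{\{1\}}^*(n_2, \dots, n_r) = \phi^*(n_2, \dots, n_r)$. I will also verify that the stated formula extends to $r = 2$ by direct computation (it reads simply $n_1 + 1 = \phi^*(n_1, n_2)$), so that the induction may call back on length-$2$ tuples without issue.

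For the inductive step, I apply Theorem \ref{theorem:phi*Recurrence}, splitting off the $i = 1$ summand:
\begin{equation*}
\phi^*(n_1, n_2, \dots, n_r) = \phi^*(n_2, \dots, n_r) + \sum_{i=2}^r \phi^*(n_1 - 1, n_2, \dots, n_i) \cdot \phi^*(n_{i+1}, \dots, n_r).
\end{equation*}
Minimality of $n_1 - 1$ in each shortened tuple is automatic, so I invoke the inductive hypothesis on each factor $\phi^*(n_1 - 1, n_2, \dots, n_i)$ for $i \geq 2$. The key observation is that for $S \subset [2, i - 1]$,
\begin{equation*}
\phi_{S_1}^*(n_2, \dots, n_i) \cdot \phi^*(n_{i+1}, \dots, n_r) = \phi_{T_1}^*(n_2, \dots, n_r),
\end{equation*}
where $T = S \cup \{i\}$ when $i < r$ and $T = S$ when $i = r$ (since $\phi^*(\emptyset) = 1$); this follows directly from Definition \ref{defn:phi*S}. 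I then reindex the double sum by the resulting subset $T \subset [2, r-1]$.

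The heart of the argument is that for each non-empty $T$ with $\max T = j_t$, exactly two $(i, S)$ pairs produce this $T$, namely $(r, T)$ and $(j_t, T \setminus \{j_t\})$, contributing $\binom{n_1}{|T_1|}$ and $\binom{n_1}{|T_1| - 1}$ respectively; Pascal's identity sums these to $\binom{n_1 + 1}{|T_1|}$. For $T = \emptyset$, only $(i, S) = (r, \emptyset)$ contributes from the double sum, giving $n_1 \cdot \phi^*(n_2, \dots, n_r)$, which combines with the separated $i = 1$ term to yield $(n_1 + 1) \phi^*(n_2, \dots, n_r) = \binom{n_1 + 1}{1} \phi_{\{1\}}^*(n_2, \dots, n_r)$, matching the $T = \emptyset$ term. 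The main obstacle is the bookkeeping of which $(i, S)$ pairs contribute to each $T$ and the boundary cases $T = \emptyset$ and $i = r$; nothing deeper than Pascal's identity is required once this is set up.
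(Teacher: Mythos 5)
Your proof is correct and takes essentially the same approach as the paper: both proceed by induction on $n_1$, apply the one-step recurrence of Theorem \ref{theorem:phi*Recurrence}, invoke the inductive hypothesis on each factor $\phi^*(n_1-1, n_2, \dots, n_i)$, and regroup the resulting terms via Pascal's identity. The paper isolates the $i=1$ and $i=r$ terms explicitly and organizes the regrouping as a sum of three pieces, while you reindex the entire double sum by the target subset $T$ and count preimages; these are the same computation with slightly different bookkeeping.
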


\begin{proof}
The cases $r=1$ and $r=2$ are special cases of Proposition \ref{prop low-run formulas}.  If $r \geq 3$ is fixed, we proceed by induction on $n_1$.  The base case is $n_1 = 1,$ which by Theorem \ref{theorem:phi*Recurrence} can be rewritten as:
\begin{align*}
\phi^*&(1, n_2, \dots, n_r) = \sum_{i=1}^r \phi^*(0, n_2, \dots,
n_i)\cdot \phi^*(n_{i+1}, \dots, n_r) \\
&= 2\phi^*(n_2, \dots, n_r) + \sum_{i = 2}^{r-1} \phi_{\{1,i\}}^*(n_2,
\dots, n_r)  = \sum_{S \subset [2,r-1]} \binom{2}{|S_1|}
\phi_{S_1}^*(n_2, \dots, n_r),
\end{align*}
which is the desired formula (note that the $i=1$ and $i=r$ term are identical, and correspond to $S = \emptyset$).

For the general case, suppose that $n_1 > 1.$  Here Theorem \ref{theorem:phi*Recurrence}
 and the inductive hypothesis together imply that
\begin{align}
\label{eq:phi*SRecurrence1}
\phi^*&(n_1, \dots, n_r) = \sum_{i=1}^r \phi^*(n_1-1, n_2, \dots, n_i)
\phi^*(n_{i+1}, \dots, n_r) \\
& = \phi^*(n_2, \dots, n_r) + \sum_{S \subset
[2,r-1]}\binom{n_1}{|S_1|} \phi_{S_1}^*(n_2, \dots, n_r) \notag \\
& \qquad + \sum_{i=2}^{r-1} \sum_{S \subset [2,i-1]}
\binom{n_1}{|S_1|}\phi_{S_1}^*(n_2, \dots, n_i) \phi^*(n_{i+1}, \dots,
n_r), \notag
\end{align}
where we have again separated the $i = 1$ and $i=r$ terms.  Since $i < r$ in the last
sum above, Definition \ref{defn:phi*S} implies that
\begin{equation*}
\phi_{S_1}^*(n_2, \dots, n_i) \phi^*(n_{i+1}, \dots, n_r) = \phi_{S_1
\cup \{i\}}^*(n_2, \dots, n_r),
\end{equation*}
so (\ref{eq:phi*SRecurrence1}) becomes
\begin{align}
\label{eq:phi*SRecurrence2}
\phi^*(n_2, \dots, n_r) & + \sum_{S
\subset [2,r-1]}\binom{n_1}{|S_1|}
\phi_{S_1}^*(n_2, \dots, n_r) \\
& + \sum_{\emptyset \neq S \subset [2,r-1]}
\binom{n_1}{|S_1|-1}\phi_{S_1}^*(n_2, \dots, n_r). \notag
\end{align}
Combining like terms and adding binomial coefficients in (\ref{eq:phi*SRecurrence2}) gives
\begin{equation*}
\phi^*(n_1, \dots, n_r) = \sum_{S \subset [2,r-1]} \binom{n_1 + 1}{|S_1|}\phi_{S_1}^*(n_2,
\dots, n_r).
\end{equation*}
\end{proof}

Theorem \ref{theorem:phi*Recurrence1} can also be interpreted (and proved) combinatorially by examining the structure of the pairings counted by $\phi^*(n_1, \dots, n_r) = \phi(1^{n_1}0^{n_1}\cdots1^{n_r}0^{n_r}).$  Specifically, consider the initial run $1^{n_1}$; the first $1$ must pair with a $0$ contained in some run $0^{n_{i_1}}$.  Define $t_1$ such that the first $t_1$ $1$s all pair to the $(i_1)$-th run of $0$s, but the $(t_1 + 1)$-th $1$ pairs to a different run $0^{n_{i_2}}$.  Continue similarly, so that the $(t_{j-1} + 1)$-th through $(t_j)$-th $1$s pair to the $(i_j)$-th run of $0$s until we reach the $(t_s)$-th $1$, after which all subsequent $1$s pair to the first run of $0$s (if there are no such pairings, then $t_s = n_1$).
\begin{figure}[htbp]
\begin{center}
	\includegraphics[width=0.8\textwidth]{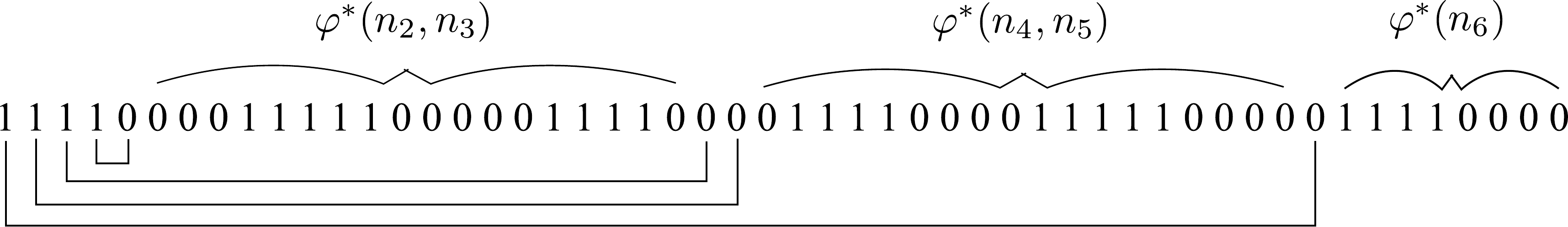}
\caption{Using the notation in the paragraph preceding \eqref{eq:simrecur}, in
    this figure we have $(n_1, \ldots, n_r) = (4,5,4,4,5,4)$, and $(i_s, \ldots, i_1) = (3,5)$ and $\{t_1,
    \ldots, t_s\} = \{1,3\}$.  Notice that we are also using the rotational
    symmetries of $\phi$ here.  That is: the segment giving $\phi^*(n_2, n_3)$,
    equal to $\phi^*(4,4)$ in this case, is actually given by $\phi(3,4,4,4,1)$.  By
    the rotational symmetries of $\phi$, this is equal to $\phi^*(4,4)$.}
\label{fig:intuitivepicture}
\end{center}
\end{figure}
Thus we have a set $S = \{i_s, i_{s-1}, \dots, i_1\} \subset [2,r],$ and also
integers $1 \leq t_1 < t_2 < \dots < t_s \leq n_1,$ and by rotation as in Corollary
\ref{corollary:phisymmetry} (see Figure \ref{fig:intuitivepicture}) there are $\phi_{S_1}^*(n_2, \dots, n_r)$ ways to pair the remaining portions of the bitstring (Definition \ref{defn:phi*S} is extended in the obvious way if $r \in S$).  This completes the proof of an equivalent form of Theorem \ref{theorem:phi*Recurrence1}:
\begin{equation}\label{eq:simrecur}
\phi^*(n_1, \dots, n_r) = \sum_{S \subset [2,r]}
\binom{n_1}{|S|}\phi_{S_1}^*(n_2, \dots, n_r).
\end{equation}

\subsection{Tree expansion for $\phi^*$}
\label{subsection:Treephi*}
We now find an exact formula for $\phi^*$ by further unwinding the recurrence of Theorem \ref{theorem:phi*Recurrence1} and introducing a natural tree structure that describes the iteration.  First, we recall the theory of Catalan sequences and trees.
\begin{defn}
\label{definition:Catalan}
A {\it Catalan degree sequence of order} $r \geq 1$ is an $r$-tuple $\dd := (d_1, d_2, \dots, d_r)$ of nonnegative integers such that
\begin{align*}
d_1 + \dots + d_i & \geq i \qquad \mathrm{if} \; 1 \leq i \leq r-1, \\
d_1 + \dots + d_r & = r-1.
\end{align*}
Denote the set of all such sequences of order $r$ by $\Dr$.
\end{defn}

It is well-known \cite[Section 5.3]{Stanley} that Catalan sequences of order $r$ are precisely the
degree sequences of rooted plane trees with $r$ vertices (all of the trees that we
consider in this work are assumed to be planar).  Given such a tree $T$, the
correspondence follows by labeling the vertices with $1, \dots, r$ according to the
{\it depth-first traversal}.  Such a traversal is defined by visiting the vertices
following a linear ordering that satisfies $v < w$ whenever $w$ is a child of $v$.
The associated Catalan sequence is then given by $d_i = \#\{\text{children of vertex }
i\}$, and for a tree $T$, we denote this {\it tree degree sequence} by $\dT$.
Furthermore, we denote the set of all rooted plane trees with $r$ vertices by $\Tr$.
We canonically display the children of each vertex ordered from left to right, so
that the depth-first traversal proceeds clockwise around the tree.  Examples are
given in Figure \ref{fig:planetrees}.
\begin{figure}[htbp]
\begin{center}
	\includegraphics[width=0.8\textwidth]{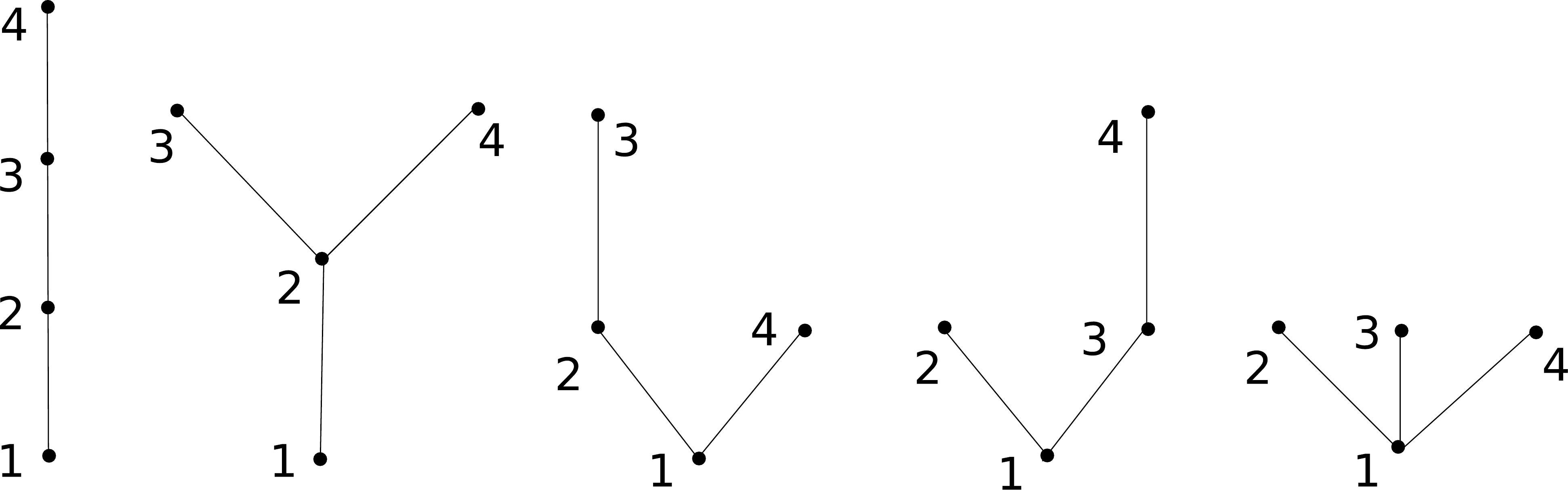}
\caption{The trivial labeling of the five plane trees with four vertices.  Their
respective degree sequences are $(1,1,1,0), (1,2,0,0), (2,1,0,0), (2,0,1,0)$ and
$(3,0,0,0)$.}
\label{fig:planetrees}
\end{center}
\end{figure}

If the root vertex is removed, then what remains is a forest of $d_1$ (sub)trees; we denote these by $T_1, \dots, T_{d_1}$. This root-removal procedure immediately implies the following standard result.
\begin{lemma}
\label{lemma:dforest}
If $\dd \in \Dr$, then there are unique indices $1 = i_0 < i_1 < \dots < i_{d_1} = r$ such that $(d_{i_j+1}, \dots, d_{i_{j+1}}) \in \D_{i_{j+1} - i_j}$ for $0 \leq j \leq d_1 - 1.$
\end{lemma}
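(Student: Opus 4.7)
The plan is to extract the decomposition from the already-established bijection between $\Dr$ and $\Tr$. Writing $\dd = \dT$ for the unique plane tree $T$ with this degree sequence, I note that the root of $T$ has $d_1$ children, and deleting it leaves an ordered forest of $d_1$ planar subtrees $T_1, \ldots, T_{d_1}$, read left-to-right. Setting $n_j := |T_j|$ and $i_j := 1 + n_1 + \cdots + n_j$ (so $i_0 = 1$), I would then argue that since the depth-first traversal of $T$ processes the root first and then traverses each $T_j$ completely before moving on to $T_{j+1}$, the labels $i_{j-1}+1, \ldots, i_j$ are precisely the vertices of $T_j$ labelled by its own depth-first traversal. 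Consequently $(d_{i_{j-1}+1}, \ldots, d_{i_j}) = \dd_{T_j} \in \D_{n_j} = \D_{i_j - i_{j-1}}$, and $n_1 + \cdots + n_{d_1} = r - 1$ forces $i_{d_1} = r$.

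For uniqueness, I would observe that the sought indices are pinned down by the degree-sequence data alone: any valid $i_j$ must satisfy the block-sum identity $d_{i_{j-1}+1} + \cdots + d_{i_j} = i_j - i_{j-1} - 1$, while the strict Catalan inequalities on earlier prefixes of the block force $i_j$ to be the \emph{smallest} index past $i_{j-1}$ with this property. If one prefers to avoid invoking the tree bijection, the same conclusion comes out of a direct analysis of the partial sums $V_i := (d_1 + \cdots + d_i) - i$: the Catalan hypothesis gives $V_1 = d_1 - 1$, $V_i \geq 0$ for $i \leq r-1$, $V_r = -1$, and $V_{i+1} - V_i = d_{i+1} - 1 \geq -1$, so $V$ decreases by at most one per step and cannot skip levels on its way down. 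Defining $i_j$ as the first hitting time of the level $d_1 - 1 - j$ then produces a strictly increasing sequence ending at $i_{d_1} = r$, and translating the hitting-time inequalities back into partial sums of the $d_i$ recovers exactly the Catalan conditions on each block.

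I do not anticipate a genuine obstacle here --- the lemma is essentially the observation, on the degree-sequence side, that a plane tree decomposes uniquely into its root and the ordered forest of subtrees below. The only step requiring a little care is verifying that the depth-first traversal of $T$ restricted to a subtree $T_j$ coincides with the intrinsic depth-first traversal of $T_j$ (so that the labels line up correctly under the bijection), but this follows immediately from the recursive definition of depth-first traversal and is presumably what the authors have in mind when they call the result ``standard''.
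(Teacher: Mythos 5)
Your proof is correct and takes the same approach the paper sketches: the paper offers no formal argument, merely remarking that the lemma follows ``immediately'' from the root-removal decomposition of a plane tree, and your first paragraph supplies exactly the missing details (tracking how the depth-first labels of $T$ restrict to the subtrees $T_j$). Your second paragraph, reformulating everything as a first-hitting-time argument for the partial sums $V_i = (d_1 + \cdots + d_i) - i$, is a clean, purely combinatorial alternative that avoids invoking the tree bijection at all; it is not what the paper has in mind but is a perfectly good, arguably more self-contained, route to the same statement.
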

\noindent We can equivalently define the indices $i_j$ as satisfying the condition $T_j \in \T_{i_{j} - i_{j-1}}.$

\medskip

We postpone further discussion of the combinatorics of these sequences until after the proof of the following special case of the general formula for $\phi^*$.
\begin{theorem}
\label{theorem:phi*increasing}
If $0 \leq n_1 \leq n_2 \leq \dots \leq n_r,$ then
\begin{equation*}
\phi^*(n_1, n_2, \dots, n_r) = \sum_{\dd \:  \in \: \Dr} \; \prod_{i=1}^r \binom{n_i + 1}{d_i}.
\end{equation*}
\end{theorem}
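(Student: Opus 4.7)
My plan is to prove this by induction on $r$ using the one-term recurrence from Theorem \ref{theorem:phi*Recurrence1}, matching the resulting combinatorial expansion against the root-removal decomposition of plane trees in Lemma \ref{lemma:dforest}. The base case $r = 1$ is immediate: $\D_1 = \{(0)\}$, so $\sum_{\dd \in \D_1} \prod_i \binom{n_i+1}{d_i} = \binom{n_1+1}{0} = 1 = \phi^*(n_1)$.

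For the inductive step, fix $r \geq 2$. Because $n_1 \leq \cdots \leq n_r$, $n_1$ is minimal, so Theorem \ref{theorem:phi*Recurrence1} together with Definition \ref{defn:phi*S} expresses $\phi^*$ as
\begin{equation*}
\phi^*(n_1, \ldots, n_r) = \sum_{k=1}^{r-1} \binom{n_1+1}{k} \sum_{\substack{r_1 + \cdots + r_k = r-1 \\ r_j \geq 1}} \prod_{j=1}^{k} \phi^*(n_{\sigma_{j-1}+1}, \ldots, n_{\sigma_j}),
\end{equation*}
where $\sigma_0 = 1$ and $\sigma_j = 1 + r_1 + \cdots + r_j$, and the compositions $(r_1,\ldots,r_k)$ of $r-1$ into positive parts are in bijection with the sets $S_1 = S \cup \{1\} \subseteq [1,r-1]$ of cardinality $k$ appearing in Theorem \ref{theorem:phi*Recurrence1}. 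The monotonicity hypothesis is used here in two essential ways: it makes $n_1$ minimal (so that Theorem \ref{theorem:phi*Recurrence1} applies at the outset) and it passes to every sub-block $(n_{\sigma_{j-1}+1}, \ldots, n_{\sigma_j})$, whose leading entry is then minimal within that block, allowing the induction hypothesis to be invoked on each factor. Substituting that hypothesis yields
\begin{equation*}
\phi^*(n_1, \ldots, n_r) = \sum_{k=1}^{r-1} \binom{n_1+1}{k} \sum_{\substack{r_1 + \cdots + r_k = r-1 \\ r_j \geq 1}} \prod_{j=1}^{k} \sum_{\dd^{(j)} \in \D_{r_j}} \prod_{\ell=1}^{r_j} \binom{n_{\sigma_{j-1}+\ell}+1}{d^{(j)}_\ell}.
\end{equation*}

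To identify this with $\sum_{\dd \in \Dr} \prod_{i=1}^r \binom{n_i+1}{d_i}$, I invoke Lemma \ref{lemma:dforest}: specifying a tree $T \in \Tr$ (equivalently, a sequence $\dd \in \Dr$) is equivalent to specifying $d_1 = k \geq 1$, a composition $(r_1, \ldots, r_k)$ of $r-1$ recording the sizes of the $k$ subtrees $T_1, \ldots, T_k$ rooted at the children of vertex $1$, and sequences $\dd^{(j)} \in \D_{r_j}$ describing each $T_j$. The depth-first traversal of $T$ assigns to the $\ell$-th vertex of $T_j$ the global label $\sigma_{j-1} + \ell$. Thus the factor $\binom{n_1+1}{k}$ records vertex $1$ (where $d_1 = k$), and the inner products record vertices $2, \ldots, r$, matching $\prod_{i=1}^r \binom{n_i+1}{d_i}$ term by term. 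Reindexing by $\dd \in \Dr$ finishes the induction.

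The main obstacle is bookkeeping rather than invention: one must carefully match the correspondence between the sets $S_1$ appearing in Theorem \ref{theorem:phi*Recurrence1} and the tree decomposition of Lemma \ref{lemma:dforest}, and verify that the depth-first labels line up correctly with the indices of the sub-blocks. No new inequality or identity is required beyond the two results already established.
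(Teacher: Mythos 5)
Your proposal is correct and follows essentially the same path as the paper's own proof: induction on $r$, one application of Theorem \ref{theorem:phi*Recurrence1} (valid since monotonicity makes $n_1$ minimal), the inductive hypothesis applied to each sub-block (valid since monotonicity passes to sub-blocks), and Lemma \ref{lemma:dforest} to identify the resulting sum with a sum over $\Dr$. The only difference is cosmetic: you reparametrize the sets $S_1$ by compositions $(r_1,\ldots,r_k)$ of $r-1$, which is exactly the correspondence implicit in the paper's use of Lemma \ref{lemma:dforest}.
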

\begin{proof}
We proceed by induction on $r$.  If $r = 1$, the only Catalan degree sequence is $d_1
= 0$, and thus $\phi^*(n_1) = 1$ as in Proposition \ref{prop low-run formulas}.  For the general case we use the recurrence from Theorem \ref{theorem:phi*Recurrence1} and consider the term corresponding to an arbitrary set $S = \{i_1, \dots, i_s\} \subset [2, r-1]$.  Expanding Definition \ref{defn:phi*S} gives
\begin{equation}
\label{eq:phi*S1}
\phi^*_{S_1} = \phi^*(n_2, \dots, n_{i_1}) \phi^*(n_{i_1 + 1}, \dots, n_{i_2}) \cdots \phi^*(n_{i_s + 1}, \dots, n_r),
\end{equation}
and the induction hypothesis now implies that for any $2 \leq j \leq k \leq r,$
\begin{equation}
\label{eq:phi*njnk}
\phi^*(n_j, \dots, n_k) = \sum_{\dd \:  \in \: \D_{k-j+1}} \; \prod_{i=j}^{k} \binom{n_{i} + 1}{d_i},
\end{equation}
where we have renumbered the indices of the $d_i$ so that they correspond to the $n_i$'s.  Combining (\ref{eq:phi*njnk}), (\ref{eq:phi*S1}), and Theorem \ref{theorem:phi*Recurrence1} (note that by assumption $n_j \leq n_k$ for $j < k$, so the leading $n_{i_j}$s are always minimal) yields the formula
\begin{align}
\label{equation:phi*d'}
\phi^*(n_1,n_2, \dots, n_r) & = \sum_{S \subset [2,r-1]}
\binom{n_1+1}{|S_1|} \phi_{S_1}^*(n_2, \dots, n_r) \\
& = \sum_{S \subset [2,r-1]} \binom{n_1+1}{|S_1|}
\sum_{\dd'} \; \prod_{i=2}^{r} \binom{n_i + 1}{d_i}. \notag
\end{align}
The inner sum is over all $\dd' = (d_2, \dots, d_r)$ that are a concatenation of $s+1$ Catalan sequences
\begin{align*}
(d_2, \dots, d_{i_1}) & \in \D_{i_1 -1} \\
(d_{i_1 + 1}, \dots, d_{i_2}) & \in \D_{i_2 - i_1} \\
& \vdots \\
(d_{i_{s-1}+1}, \dots, d_r) & \in \D_{r-i_{s-1}}.
\end{align*}
We finish the proof by setting $d_1 = |S_1| = s+1;$ Lemma \ref{lemma:dforest} then implies that $\dd = (d_1, \dots, d_r) \in \Dr,$ so (\ref{equation:phi*d'}) is equivalent to the claimed formula.
\end{proof}

We can equivalently write the formula in Theorem \ref{theorem:phi*increasing} as a sum over trees:
\begin{equation}
\label{eq:phi*tree}
\phi^*(n_1, n_2, \dots, n_r) = \sum_{T \:  \in \: \Tr} \; \prod_{i=1}^r \binom{n_i + 1}{d_i},
\end{equation}
where $d_i$ is the degree of vertex $i$ in $T$.  Again, this formula can also be proven using combinatorial arguments on noncrossing
pairings.  The summand corresponding to some tree $T \in \Tr$ counts all of the
pairings on the bitstring $1^{n_1} 0^{n_1} \dots 1^{n_r} 0^{n_r}$ with {\em pairing
structure} $T$.  This structure describes any noncrossing pairing on $r$ runs by a
tree with $r$ vertices that encodes the pairings at the level of runs rather than
individual bits.  It is defined inductively as follows: if the first run of $1$s
pairs to the $i_1$-th, $i_2$-th, \dots, and $i_{d_1 -1}$-th runs of $0$s, with $i_j > 1$ for all
$j$, then subtree $T_1$ consists of vertices $\{2, \dots, i_1\}$, $T_2$ consists of
$\{i_1 + 1, \dots, i_2\}$, \dots, and $T_{d_1}$ consists of vertices $\{i_{d_1-1} +
1, \dots, r\}$.  The exact structure of subtree $T_j$ is inductively determined by the induced pairing on an appropriately rotated substring.

\medskip

In particular, suppose that the $a$-th $1$ in $1^{n_1}$ is the final $1$ that pairs to $0^{n_{i_j}}$.  Due to height considerations, this $1$ must pair to the $(n_{i_j}-a+1)$-th $0$ in $0^{n_{i_j}}$, and the $(a+1)$-th $1$ then pairs to the $(n_{i_{j-1}}-a)$-th $0$ in $0^{n_{i_{j-1}}}$.  Thus the substring that is paired ``locally'' (due to the noncrossing condition) is
\begin{equation*}
0^a 1^{n_{i_{j-1}+1}}0^{n_{i_{j-1}+1}} \dots 1^{n_{i_j}} 0^{n_{i_j} -a}.
\end{equation*}
This is rotationally equivalent to $1^{n_{i_{j-1}+1}}0^{n_{i_{j-1}+1}}\dots
1^{n_{i_j}}0^{n_{i_j}}$, and we can now proceed inductively in the construction of
$T_j$ by considering which runs of $0$s are paired to the first run $1^{n_{i_{j-1}+1}}$.
\begin{figure}[htbp]
\begin{center}
	\includegraphics[width=0.8\textwidth]{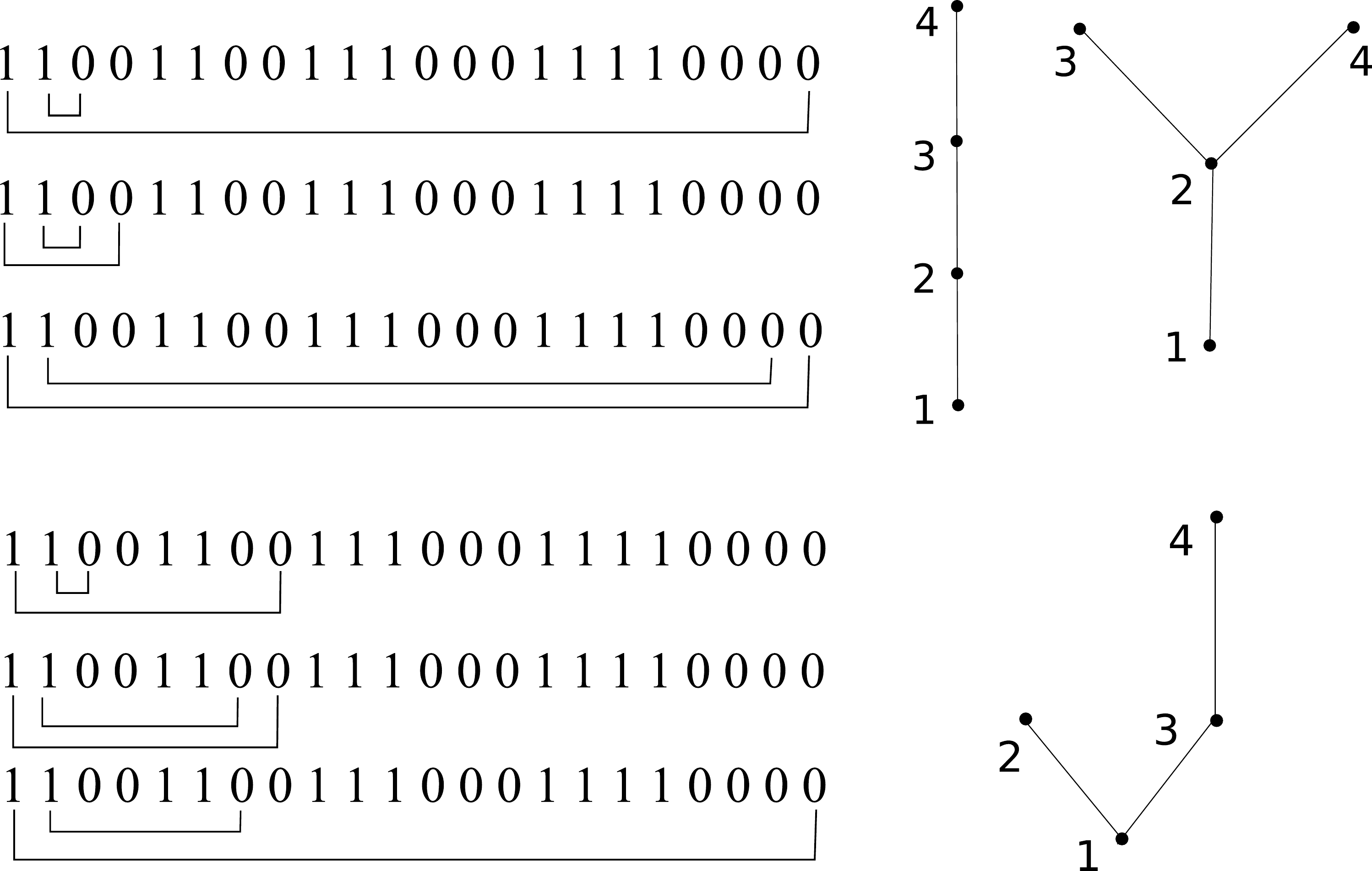}
\caption{In this figure, we have $(n_1, \ldots, n_r) = (2,2,3,4)$.  The first three
types of pairings corresponding to trees, with root vertices of degree 1, immediately
to their right.  The second set of pairings correspond to the tree immediately to
their right.}
\label{fig:pairingtrees}
\end{center}
\end{figure}

For example, in the diagram in Figure \ref{fig:pairingtrees} the first three types of
pairings correspond to the two plane trees on four vertices with root having degree
1.  Notice that in these three types of pairings, what remains is a pairing on the string $\mx{S} =
110011100011110000$.  Inductively, we compute that the number of such pairings is 15
and thus the total number of the first three types of pairings in Figure
\ref{fig:pairingtrees} is 45.  A simple computation shows that the terms of
\eqref{eq:phi*tree} corresponding to plane trees of degree 1 are
\begin{align*}
	{n_1 + 1 \choose d_1}  {n_2 + 1 \choose d_2}  {n_3 + 1 \choose d_3}
	&+ {n_1 + 1 \choose d_1} {n_2 + 1 \choose  d_2}\\ &=
	{2 + 1 \choose 1}  {2 + 1 \choose 1}  {3 + 1 \choose 1} + {2 + 1
	\choose 1} {2 + 1 \choose 2} \\
	&= 45
\end{align*}
Similarly, the second set of three pairings correspond to the tree to their right.
Notice that the number of strings of each type in this second category is $1
\times 4$ (one pairing for the string $1100$ and four pairings for the string
$11100011110000$).  Again, these strings are paired locally, due to the pairing being
noncrossing.  Thus, there should be 12 pairings of this type.  The term in \eqref{eq:phi*tree} corresponding to this tree is
\begin{equation*}
	{2 + 1 \choose 2} \cdot {3 + 1 \choose 1} = 12,
\end{equation*}
in agreement with the number of pairings.

It is the version of Theorem \ref{theorem:phi*increasing} in (\ref{eq:phi*tree}) that we generalize to the case of arbitrary $n_i$, using certain labelings on the pairing structure trees to encode the recursion for $\phi^*$.
\begin{defn}
\label{defn:Tlabeling}
If $T \in \Tr$, the {\it labeling} of $T$ by positive integers $(a_1, \dots, a_r)$ is determined by the following recursive procedure:
\begin{enumerate}
 \item Cyclically rotate $(a_1, \dots, a_r)$ so that $a_1$ is minimal.
 \item Label the root vertex with $a_1$.
 \item Label the subtree $T_j$ by $(a_{i_j+1}, \dots, a_{i_{j+1}})$, with $i_j$ as in Lemma \ref{lemma:dforest}.
\end{enumerate}
The {\em labeling of $T$ by a permutation} $\sigma \in S_r$ is defined to be the labeling of $T$ by the sequence $(\sigma(1), \dots, \sigma(r))$.  Conversely, define the {\em inverse permutation} of $T$ labeled by $\sigma$ as the $\sigma_T \in S_r$ satisfying  $$\sigma_T(i) = \text{label of vertex} \; i \; \text{in} \; T.$$
\end{defn}
\begin{remark} Note that $\sigma = \text{id}$ corresponds to the usual depth-first
	numbering of $T$ (so vertex $i$ is also labeled with $i$), and in this case
	$\sigma_T = \text{id}$ as well.  Figure \ref{fig:nontrivtreeexamp} gives an
	example of a nontrivial labeling.
\end{remark}
\begin{figure}[htbp]
\begin{center}
	\includegraphics[width=0.28\textwidth]{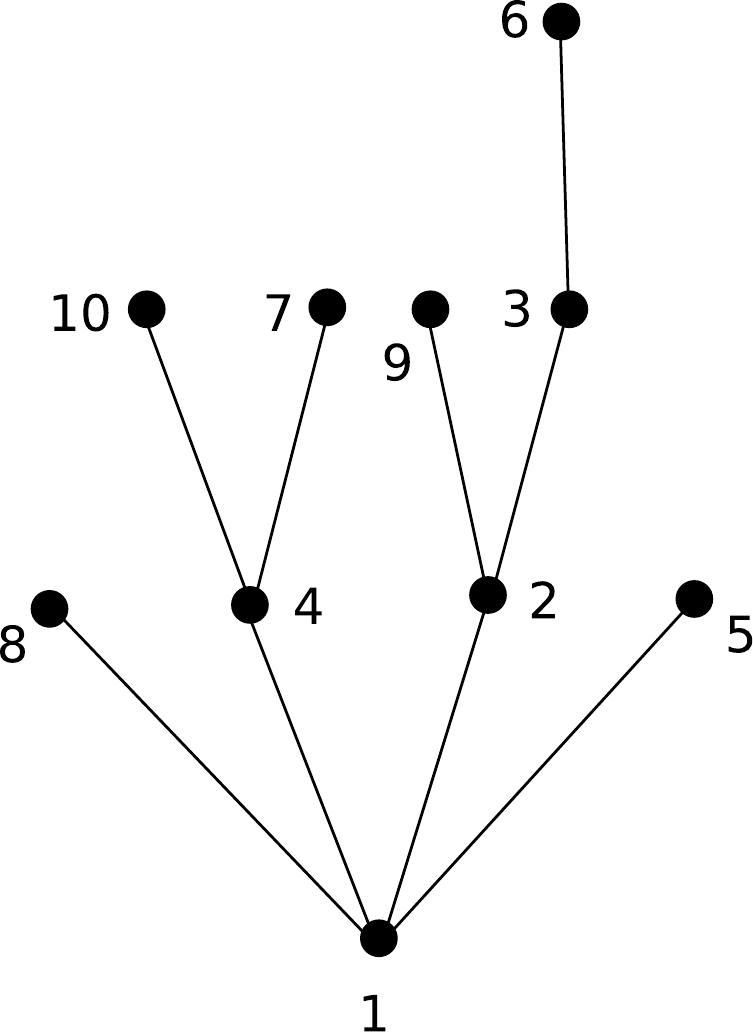}
	\caption{A tree labeled with the permutation $\sigma =
	(2,9,6,3,5,1,8,7,4,10)$.}
\label{fig:nontrivtreeexamp}
\end{center}
\end{figure}

With these definitions and notation, we can now state and prove the general formula for $\phi^*$.
\begin{theorem}
\label{theorem:phi*formula}
If $(n_1, \dots, n_r) = (n'_{\sigma(1)}, \dots, n'_{\sigma(r)}),$ with $0 \leq n'_1 \leq \dots \leq n'_r,$ then
\begin{equation*}
\phi^*(n_1, n_2, \dots, n_r) = \sum_{T \:  \in \: \Tr} \; \prod_{i=1}^r \binom{n'_{\sigma_T(i)} + 1}{d_i}.
\end{equation*}
\end{theorem}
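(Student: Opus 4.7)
The plan is to prove Theorem \ref{theorem:phi*formula} by induction on $r$, running parallel to the proof of Theorem \ref{theorem:phi*increasing} while carrying along the labeling data. The base case $r=1$ is immediate: $\T_1$ has a unique element with $d_1 = 0$, and the right-hand side equals $\binom{n'_{\sigma_T(1)}+1}{0} = 1 = \phi^*(n_1)$. For the inductive step, both sides of the asserted identity are invariant under cyclic permutations of $(n_1, \ldots, n_r)$: the left by Corollary \ref{corollary:phisymmetry}, and the right because step (1) of Definition \ref{defn:Tlabeling} begins by cyclically rotating any input to place its minimum first. I may therefore assume $n_1$ is minimal, so $n_1 = n'_1$ and $\sigma_T(1) = 1$ for every $T \in \T_r$.

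Now apply Theorem \ref{theorem:phi*Recurrence1} to write
\[ \phi^*(n_1, n_2, \ldots, n_r) = \sum_{S \subset [2,r-1]} \binom{n_1+1}{|S_1|}\, \phi^*_{S_1}(n_2, \ldots, n_r). \]
For fixed $S = \{i_1 < \cdots < i_s\}$ with $i_0 = 1$ and $i_{s+1} = r$, Definition \ref{defn:phi*S} expands $\phi^*_{S_1}$ as a product $\prod_{j=1}^{s+1} \phi^*(n_{i_{j-1}+1}, \ldots, n_{i_j})$, and the inductive hypothesis rewrites each factor as a sum over trees $T_j \in \T_{i_j - i_{j-1}}$ with a corresponding binomial product determined by the labeling of $T_j$ via Definition \ref{defn:Tlabeling} applied to the subsequence. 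Lemma \ref{lemma:dforest} then provides the required bijection between tuples $(S, T_1, \ldots, T_{s+1})$ and trees $T \in \T_r$ whose root has degree $d_1 = s+1$ and whose left-to-right subtrees are $T_1, \ldots, T_{s+1}$.

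To finish, I verify that under this bijection the binomial products match. The root contributes $\binom{n_1+1}{d_1} = \binom{n'_{\sigma_T(1)}+1}{d_1}$, and the binomials from each $T_j$ encode the values assigned to non-root vertices by the recursive application of Definition \ref{defn:Tlabeling} to the subsequence $(n_{i_{j-1}+1}, \ldots, n_{i_j})$. By the recursive structure of the labeling, these coincide with the values $n'_{\sigma_T(v)}$ produced by a single global application of the labeling of $T$ by $(n_1, \ldots, n_r)$. The products therefore reassemble into $\prod_{i=1}^r \binom{n'_{\sigma_T(i)}+1}{d_i}$, completing the induction.

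The main obstacle is careful index-tracking through the iterated cyclic rotations built into Definition \ref{defn:Tlabeling}. The subsequence $(n_{i_{j-1}+1}, \ldots, n_{i_j})$ need not begin with its own minimum, so additional internal rotations occur when the inductive hypothesis is invoked; one must verify that the values ultimately assigned to vertices of $T_j$ by these internal rotations coincide with those produced by the top-level labeling of $T$ by $(n_1, \ldots, n_r)$. This compatibility is essentially hard-wired into the recursive definition, so the verification is a bookkeeping exercise rather than a deep combinatorial identity.
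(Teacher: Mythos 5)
Your proposal is correct and follows essentially the same route as the paper: the paper's proof is a brief remark that the argument is identical to the induction proving Theorem \ref{theorem:phi*increasing}, with the observation that Definition \ref{defn:Tlabeling} is built to absorb the cyclic rotations forced by the minimality hypothesis in Theorem \ref{theorem:phi*Recurrence1}. You have simply expanded that sketch into a full induction, making explicit the cyclic invariance of both sides (so one may assume $n_1$ minimal), the decomposition of $\phi^*_{S_1}$ into factors via Lemma \ref{lemma:dforest}, and the recursive compatibility of the subtree labelings with the global labeling of $T$.
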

\noindent Before proving this formula, we illustrate its usefulness by writing down
the resulting formula for $\phi^*(n'_4, n'_1, n'_3, n'_2)$.  Figure
\ref{fig:planetreesnontriv} shows all trees in $\T_4$ labeled with the sequence
$\sigma = (4,1,3,2)$.  Adding up all of these terms gives
\begin{figure}[htbp]
\begin{center}
	\includegraphics[width=0.7\textwidth]{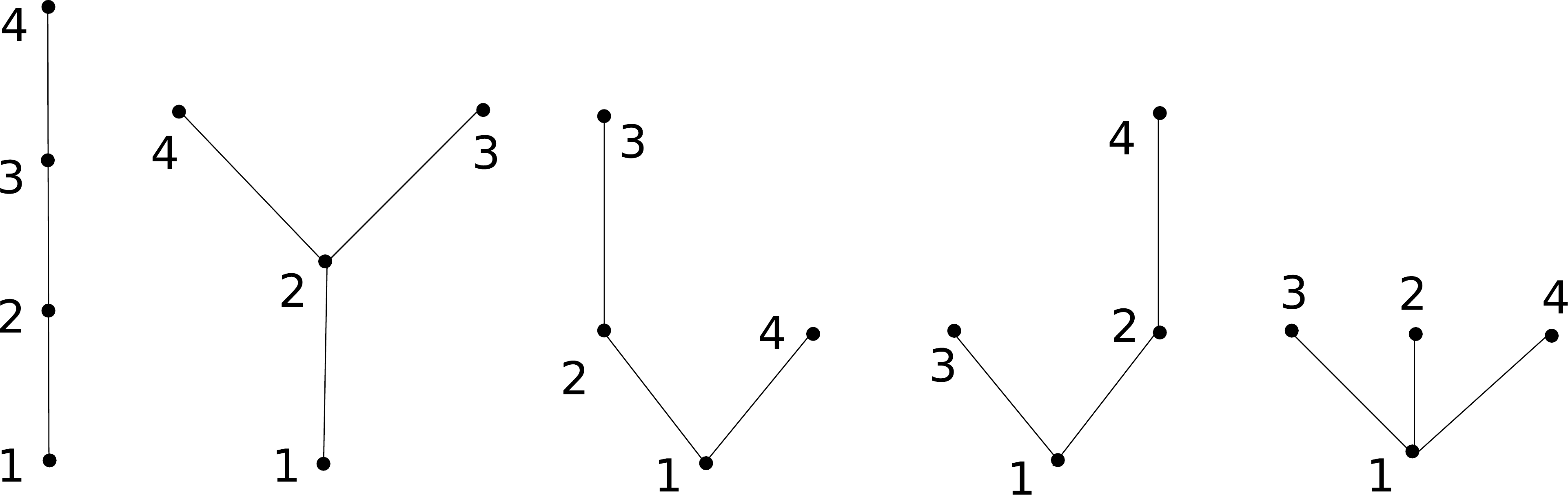}
	\caption{The five plane trees labeled with the permutation $\sigma = (4,1,3,2)$.}
\label{fig:planetreesnontriv}
\end{center}
\end{figure}
\begin{align}
\label{equation:phi*example}
\phi^*(n'_4, n'_1, n'_3, n'_2) = & \binom{n'_1 + 1}{1}\binom{n'_2 + 1}{1}\binom{n'_3 + 1}{1} +
 \binom{n'_1 + 1}{1}\binom{n'_2 + 1}{2} \\
 & + 2 \binom{n'_1 + 1}{2}\binom{n'_2 + 1}{1} +
 \binom{n'_1 + 1}{3}. \notag
\end{align}
\begin{proof}[Proof of Theorem \ref{theorem:phi*formula}]
The argument is very similar to the proof of Theorem \ref{theorem:phi*increasing}, with one key difference: the recursive calls to Theorem \ref{theorem:phi*Recurrence1} require the first argument to be minimal, which was automatically satisfied when the sequence of $n_i$s was assumed to be weakly increasing.  With an arbitrary permutation $\sigma,$ however, this is no longer the case, and thus we are forced to rotate to a minimal $n_i$ in each recursive call.  However, the labeling procedure from Definition \ref{defn:Tlabeling} exactly accounts for these rotations and recursive splits, and therefore the permutation $\sigma_T$ is the necessary adjustment so that the formula holds in general.
\end{proof}

\subsection{The Conjectural Structure of Tree Polynomial Orderings}
\label{section tree ordering}

For notational convenience, we write
\begin{equation}
\label{eq:[n]^k}
[n]^k := \binom{n+1}{k}.
\end{equation}
Furthermore, we call the summands on the right-hand side of Theorem \ref{theorem:phi*formula} the {\it tree polynomials} associated to $\sigma,$ considered as functions of the $n'_i$.  We denote these by
\begin{equation}
\label{eq:pT}
\pT = \pT(n'_1, \dots, n'_r) := \prod_{i = 1}^r \: [n'_{\sigma'(i)}]^{d_i}.
\end{equation}
The {\it degree} of such a polynomial is simply $d_1 + \dots + d_r$.

Theorem \ref{theorem:phi*formula} can now be written compactly as a homogeneous (in degree) sum of tree polynomials,
\begin{equation*}
\phi^*(n'_{\sigma(1)}, \dots, n'_{\sigma(r)}) = \sum_{\dT \: \in \: \Tr} \pT.
\end{equation*}
In practice we will write the products in (\ref{eq:pT}) ordered by increasing $n'$ subscripts.  For example, if $\sigma = (4132) \in S_4$, then the formula from (\ref{equation:phi*example}) is written more compactly as
\begin{equation}
\label{equation:phi*example[]}
\phi^*(n'_4, n'_3, n'_1, n'_2) = [n'_1]^1[n'_2]^1[n'_3]^1 + [n'_1]^1[n'_2]^2 + [n'_1]^2[n'_2]^1 + [n'_1]^2[n'_2]^1 + [n'_1]^3.
\end{equation}

In order to find an upper bound for $\phi^*$ we must compare expressions of the form (\ref{eq:pT}).  Elementary properties of binomial coefficients imply the following two simple tests for inequality that are sufficient for our purposes.
\begin{proposition}
\label{proposition:[n1]}
Suppose that $n_1 \leq n_2$.
\begin{enumerate}
\item
For any $k$, we have
\begin{equation*}
[n_1]^k \leq [n_2]^k.
\end{equation*}
\item
If $k_1 \leq k_2$, then
\begin{equation*}
[n_1]^{k_2} [n_2]^{k_1} \leq [n_1]^{k_1} [n_2]^{k_2}.
\end{equation*}
\end{enumerate}
\end{proposition}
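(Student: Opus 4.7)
The plan is to verify both claims by direct manipulation of binomial coefficients, using the identification $[n]^k = \binom{n+1}{k}$.

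For part (1), I would invoke the standard monotonicity $\binom{N+1}{k} \ge \binom{N}{k}$, which follows at once from Pascal's rule $\binom{N+1}{k} = \binom{N}{k} + \binom{N}{k-1}$ and non-negativity of binomial coefficients. Iterating this $n_2 - n_1$ times gives $\binom{n_1+1}{k} \le \binom{n_2+1}{k}$, as required; no edge cases arise since all quantities are non-negative.

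For part (2), the inequality is the log-supermodularity (the $\mathrm{TP}_2$ property) of $\binom{N}{k}$ in the pair $(N,k)$. First I would reduce to the non-degenerate case: if $k_2 > n_1+1$ then $\binom{n_1+1}{k_2} = 0$, so the left side of (2) is already $0$ and there is nothing to prove; thus one may assume $k_1 \le k_2 \le n_1+1 \le n_2+1$, in which case every binomial coefficient is strictly positive. Then I would rewrite the claim as the equivalent monotonicity statement
\[
\frac{\binom{n_1+1}{k_2}}{\binom{n_1+1}{k_1}} \;\le\; \frac{\binom{n_2+1}{k_2}}{\binom{n_2+1}{k_1}},
\]
and use the elementary ratio identity $\binom{N}{j+1}/\binom{N}{j} = (N-j)/(j+1)$ to obtain the telescoping factorization
\[
\frac{\binom{N}{k_2}}{\binom{N}{k_1}} \;=\; \prod_{j = k_1}^{k_2 - 1} \frac{N-j}{j+1}.
\]
In the range we have reduced to, each factor $(N-j)/(j+1)$ is positive and non-decreasing in $N$ for fixed $j$, so the full product is non-decreasing in $N$; specializing to $N = n_1+1$ and $N = n_2+1$ yields the desired inequality.

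There is no substantive obstacle here: the proposition amounts to well-known elementary facts about binomial coefficients (monotonicity and the $\mathrm{TP}_2$ property), and the only care required is the bookkeeping for the degenerate cases where one of the binomial coefficients vanishes, which are immediate from $k_1 \le k_2$.
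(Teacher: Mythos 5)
Your proof is correct; the paper itself gives no proof, stating only that the proposition follows from ``elementary properties of binomial coefficients.'' Your argument via Pascal's rule for part~(1) and the $\mathrm{TP}_2$/ratio-monotonicity argument (with the degenerate case $k_2 > n_1+1$ handled separately) for part~(2) is exactly the kind of elementary verification the authors have in mind.
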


\begin{defn}
Two tree polynomials $p_1$ and $p_2$ are {\it comparable} if there is a string of inequalities using the rules from Proposition \ref{proposition:[n1]} implying either $p_1 \leq p_2$ or $p_2 \leq p_1$.
\end{defn}
\begin{corollary}
\label{corollary:pT1T2}
If $p_{\: T_1}$ and $p_{\: T_2}$ are comparable, and $T_i$ has degree sequence $\dd^i$ for $i = 1, 2$, then $\dd^1 = \dd^2$ as multisets.
\end{corollary}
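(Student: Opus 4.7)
The plan is a one-observation argument. First I would unpack ``comparable'': it means that there is a finite chain
\[ p_{T_1} = q_0 \leq q_1 \leq \cdots \leq q_m = p_{T_2} \]
(or the same with the inequalities reversed) in which each step is obtained by applying one of the two rules of Proposition \ref{proposition:[n1]} to a pair of factors $[n_a]^{k_a},\,[n_b]^{k_b}$ appearing inside a common product. The crucial property I would then exploit is that both rules are \emph{exponent-multiset preserving}: Rule (1) alters only a subscript while leaving the exponent $k$ unchanged, and Rule (2) merely transposes the exponents $k_1,k_2$ between two factors. Hence each $q_j$ carries the same multiset of exponents as $q_{j+1}$, and by transitivity $p_{T_1}$ and $p_{T_2}$ share a common multiset of exponents.

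Second, I would identify the multiset of exponents of any tree polynomial with the degree multiset of its underlying tree. From the definition (\ref{eq:pT}), $\pT = \prod_{i=1}^r [n'_{\sigma_T(i)}]^{d_i}$, so each vertex $i$ contributes exactly one factor whose exponent is its degree $d_i$; the permutation $\sigma_T$ only redistributes which subscript $n'_j$ carries each exponent and does not affect the exponent multiset itself. Therefore the multiset of exponents in $\pT$ equals $\{d_1, \ldots, d_r\}$, which is exactly the multiset underlying $\dT$.

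Combining the two observations yields $\dd^1 = \dd^2$ as multisets, which is the desired conclusion. The only subtlety worth flagging --- hardly a genuine obstacle --- is the need to apply Proposition \ref{proposition:[n1]} to factors embedded inside a larger product; this is handled by the elementary fact that if $0 \leq a \leq b$ and $c \geq 0$ then $ac \leq bc$, which applies here because each symbol $[n]^k = \binom{n+1}{k}$ is a nonnegative integer. I do not expect any further difficulties.
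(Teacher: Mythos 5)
Your argument is correct. The paper states Corollary \ref{corollary:pT1T2} without proof, treating it as immediate, and your observation—that each rule in Proposition \ref{proposition:[n1]} preserves the multiset of exponents of the formal product (Rule (1) changes only a subscript, Rule (2) merely swaps two exponents between factors), while the exponent multiset of $\pT = \prod_i [n'_{\sigma_T(i)}]^{d_i}$ is by definition the degree multiset of $T$—is exactly the routine reasoning the paper leaves to the reader.
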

\begin{remark}  The converse does not hold.  For example, $[n'_1]^1[n'_4]^1$ and $[n'_2]^1[n'_3]^1$ are not comparable by our definition even though the degree multisets are both $\{1, 1\}$.  Furthermore, the terminology is appropriate, since as the $n'_i$ vary under the constraint $n'_1 \leq n'_2 \leq n'_3 \leq n'_4$, either one of the polynomials can be the larger of the two.
\end{remark}

In Section \ref{section:MainProof} we show through different means that $\phi^*$ achieves its maximum value when the arguments are weakly increasing; i.e., for any permutation $\sigma$ and weakly increasing sequence $\{n'_i\}$, we have
\begin{equation}
\label{eq:phi*n'n}
\phi^*(n'_{\sigma(1)}, \dots, n'_{\sigma(r)}) \leq \phi^*(n'_1, \dots, n'_r).
\end{equation}
Our proof there requires us to non-bijectively inject non-crossing pairings into a different set of combinatorial objects, and we therefore do not make direct use of the formula in Theorem \ref{theorem:phi*formula}.  However, numerical evidence suggests that (\ref{eq:phi*n'n}) can be understood and in fact greatly refined through the tree formula as well (recall that the right-hand side of (\ref{eq:phi*n'n}) corresponds to the identity permutation).
\begin{conjecture}
\label{conjecture:pTtau}
Suppose that $0 \leq n'_1 \leq \dots \leq n'_r$.  Then for any permutation $\sigma \in S_r$ there exists a re-ordering $\tau : \Tr \rightarrow \Tr$ such that $\pT$ and $p_{\: \tau(T), 1}$ are comparable, with
\begin{equation*}
\pT(n'_1, \dots, n'_r) \leq p_{\: \tau(T), \text{id}}(n'_1, \dots, n'_r).
\end{equation*}
\end{conjecture}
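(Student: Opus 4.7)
The plan is to prove Conjecture \ref{conjecture:pTtau} by induction on $r$, organizing the desired bijection according to the degree multiset that Corollary \ref{corollary:pT1T2} forces comparable polynomials to share.

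First I would fix a degree multiset $\mu$ of nonnegative integers summing to $r-1$, and restrict attention to the set $\Tr^\mu \subset \Tr$ of trees whose depth-first degree sequence has multiset $\mu$. Since comparability preserves the multiset, any candidate $\tau$ must restrict to a bijection $\tau^\mu : \Tr^\mu \to \Tr^\mu$. Writing $\pT = \prod_{j=1}^{r} [n'_j]^{e_j^{T,\sigma}}$, where $e_j^{T,\sigma}$ is the degree of the vertex of $T$ carrying label $j$ under the $\sigma$-labeling, and $p_{\tau^\mu(T),\text{id}} = \prod_{j=1}^{r} [n'_j]^{d_j^{\tau^\mu(T)}}$, both exponent sequences are rearrangements of $\mu$. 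A sufficient condition for $\pT \le p_{\tau^\mu(T),\text{id}}$ via the elementary rules is that $(d_j^{\tau^\mu(T)})$ can be obtained from $(e_j^{T,\sigma})$ by a sequence of rule-(b) swaps (moving a larger exponent to the right of a smaller one) and rule-(a) transfers (shifting a positive exponent at position $j$ to a later position $j'$ where the exponent is zero).

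Next I would induct on $r$. The base case $r=1$ is immediate. For the inductive step, observe that the labeling procedure of Definition \ref{defn:Tlabeling} cyclically rotates $\sigma$ so that label $1$ comes first and assigns it to the root; hence the root (of degree $d_1$) contributes the factor $[n'_1]^{d_1}$ to both $\pT$ and $p_{T,\text{id}}$, making agreement at position $j=1$ automatic. After removing the root, the remaining labels split into consecutive blocks $L_1, \ldots, L_{d_1}$ attached to the subtrees $T_1, \ldots, T_{d_1}$ from left to right; by relabeling each $L_j$ as a permutation of $\{1, \ldots, |T_j|\}$, each subtree reduces to a smaller instance of the conjecture, to which the inductive hypothesis applies. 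Combining the subtree-level bijections with a permutation of the root's subtrees chosen to align larger label-blocks with more productive subtree shapes would yield a candidate $\tau^\mu(T)$.

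The main obstacle I anticipate is showing that this recursive construction actually defines a bijection on all of $\Tr^\mu$ and that comparability holds globally, not merely within individual subtree slots. The delicacy arises because the cyclic rotation step distributes $\sigma$'s labels to subtrees in a way that depends on the position of label $1$ in $\sigma$, so distinct pairs $(T,\sigma)$ may induce the same block assignment while distinct trees $T$ force incompatible optimal permutations of their subtrees. I would attempt to resolve this through a Hall-style marriage argument within each $\Tr^\mu$: form the bipartite graph pairing $\sigma$-labeled trees with identity-labeled trees via term-wise comparability, and use the sum-level inequality proved by different means in Section \ref{section:MainProof} as a structural constraint that facilitates verifying the marriage condition. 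A subtler difficulty is that the Catalan restriction on degree sequences forbids arbitrary rearrangement of $\mu$, so one cannot simply map to the weakly-increasing arrangement; handling this via rule-(a) transfers (to absorb discrepancies at zero-exponent positions) together with a refined analysis of the Catalan poset of valid rearrangements of $\mu$ will likely be required to carry the construction through.
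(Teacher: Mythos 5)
This statement is labeled a \emph{conjecture} in the paper, and the authors explicitly state that it remains an open problem (they report only computational verification for $r \le 7$ via Mathematica). There is therefore no proof in the paper against which to compare your plan; what you have written is an outline of a possible attack, and you yourself flag the places where it is incomplete. So the most useful thing I can do is assess whether the plan as sketched could be turned into a proof.

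The plan has a genuine gap in the inductive step that you do not name. You propose to rotate $\sigma$ so that label $1$ sits at the root, match the root factor $[n'_1]^{d_1}$ on both sides, relabel each remaining block $L_j$ by its rank order inside $\{1,\ldots,|T_j|\}$, and apply the inductive hypothesis to each subtree $T_j$. But the inductive hypothesis, applied to the rank-relabeled block $L_j$ with some permutation $\sigma_j$, gives an inequality in the $|T_j|$ variables $n'_{a_1} \le \dots \le n'_{a_{|T_j|}}$ where $\{a_1,\dots,a_{|T_j|}\}$ is the \emph{actual} (generally non-consecutive) set of labels in $L_j$. On the other hand, the target polynomial $p_{\tau(T),\text{id}}$ decomposes its non-root factors into subtrees whose vertices carry \emph{consecutive} labels in depth-first order; the $j$-th subtree of $\tau(T)$ therefore involves a consecutive run of $n'$-variables. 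So the inequality produced by induction compares $\pT$ to a product of tree polynomials in the wrong variables: the LHS subtree contributions are in scattered $n'$-indices while the RHS subtree contributions are in consecutive ones. Nothing in the elementary rules of Proposition \ref{proposition:[n1]} (which operate on a fixed set of variables with swapped exponents or equal exponents) lets you pass between these two index sets factor by factor. Your proposed ``permutation of the root's subtrees'' does not repair this either, since it permutes which subtree shape lives at which slot, not which subset of $\{n'_1,\dots,n'_r\}$ the subtree's factors are drawn from. The Hall-style marriage idea is suggestive, but you do not identify a concrete condition that would guarantee a system of distinct representatives, and the sum-level inequality $\phi^*(n'_{\sigma(1)},\dots,n'_{\sigma(r)}) \le \phi^*(n'_1,\dots,n'_r)$ from Section \ref{section:MainProof} only constrains the totals, not the term-by-term comparability; degree-multiset invariance (Corollary \ref{corollary:pT1T2}) is also only a necessary condition, as the paper's own counterexample $[n'_1]^1[n'_4]^1$ versus $[n'_2]^1[n'_3]^1$ shows. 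As it stands, the proposal is a plausible research direction but not a proof, and the central difficulty — aligning the non-consecutive index sets arising from $\sigma$ with the consecutive ones in the identity labeling — is exactly the obstruction that, so far as the paper is concerned, has kept the conjecture open.
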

\begin{example*}
Let $\sigma = (4132)$.  We have already calculated $\phi^*(n'_4, n'_1, n'_3, n'_2)$ in (\ref{equation:phi*example[]}), and Theorem \ref{theorem:phi*increasing} implies that
\begin{equation}
\label{equation:phi*identity}
\phi^*(n'_1, n'_2, n'_3, n'_4) = [n'_1]^1[n'_2]^1[n'_3]^1 + [n'_1]^1[n'_2]^2 + [n'_1]^2[n'_2]^1 + [n'_1]^2[n'_3]^1 + [n'_1]^3.
\end{equation}
Comparing terms in (\ref{equation:phi*example[]}) and (\ref{equation:phi*identity}) shows that Conjecture \ref{conjecture:pTtau} holds in this case with $\tau = \text{id}.$
\end{example*}
\begin{remark}
In light of Corollary \ref{corollary:pT1T2}, Conjecture \ref{conjecture:pTtau} requires that $\tau$ preserve the degree multiset of any tree $T$, and thus $\tau$ respects the partition of $\Tr$ into subsets of trees that all have the same degree multisets.
\end{remark}

Finally, we note that we have verified the truth of Conjecture \ref{conjecture:pTtau} for $r \leq 7$ by using Mathematica to computationally check all permutations in $S_r$, but the general case remains a significant open problem. We conclude by recording several partial results in the direction of the conjecture.
\begin{definition}
A sequence $(a_1, \dots, a_r)$ is {\em unimodal} if there is some $1 \leq k \leq r$ such that
\begin{align*}
a_i \leq a_j \leq a_k \qquad & \text{if} \; i < j \leq k, \\
a_k \geq a_i \geq a_j \qquad & \text{if} \; k \leq i < j.
\end{align*}
A permutation $\sigma \in S_r$ is {\em unimodal} if $(\sigma(1), \dots, \sigma(r))$ is unimodal.
\end{definition}
We will exploit a simple recursive property of unimodal permutations.
\begin{lemma}
\label{lemma:unimodal}
Suppose that $\sigma$ is unimodal.  If $1, 2, \dots, k$ are removed from the sequence $(\sigma(1), \dots, \sigma(r)),$ then $k+1$ is either the left-most or right-most remaining term.
\end{lemma}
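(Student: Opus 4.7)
The plan is to exploit the two-sided strict monotonicity hidden in the definition of a unimodal permutation. Let $\sigma \in S_r$ be unimodal with peak position $k_0$, so that $\sigma(1) < \sigma(2) < \cdots < \sigma(k_0) > \sigma(k_0+1) > \cdots > \sigma(r)$; the strictness is automatic because $\sigma$ is a bijection, and in particular $\sigma(k_0) = r$ is the global maximum. After removing $1, 2, \ldots, k$, the surviving values are exactly $\{k+1, k+2, \ldots, r\}$, whose minimum is $k+1$. My goal is to show that this minimum sits at an endpoint of the reduced sequence.

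First I would locate the original position $p$ with $\sigma(p) = k+1$. The boundary case $k+1 = r$ means $k = r-1$, so only a single element survives and the claim is trivial. Henceforth I may assume $k+1 < r$; since $\sigma(k_0) = r$, this forces $p \neq k_0$, and thus $p$ lies either strictly on the ascending side ($p < k_0$) or strictly on the descending side ($p > k_0$).

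Suppose $p \leq k_0$. The ascending property yields $\sigma(1) < \sigma(2) < \cdots < \sigma(p) = k+1$, so $\{\sigma(1), \ldots, \sigma(p-1)\} \subseteq \{1, 2, \ldots, k\}$. Every entry to the left of position $p$ has therefore already been removed, and $k+1$ is the leftmost surviving entry. The mirror-image argument on the descending side, using $\sigma(p) > \sigma(p+1) > \cdots > \sigma(r)$, handles $p \geq k_0$ and places $k+1$ at the rightmost surviving position.

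I do not foresee any significant obstacle: the argument is essentially the one-step observation that unimodality localizes the minimum of any upward-closed value set to one of the two ends. The only point requiring a little care is the boundary $p = k_0$, which collapses as above because the peak value is forced to be $r$. This also makes transparent why unimodality is the correct hypothesis: any deviation from this structure would allow the minimum of some $\{k+1, \ldots, r\}$ to be sandwiched strictly between two surviving larger values.
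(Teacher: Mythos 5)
Your proof is correct. The paper states Lemma \ref{lemma:unimodal} without proof, and your argument---locating $p$ with $\sigma(p) = k+1$, noting $\sigma(k_0) = r$ forces $p$ onto one monotone arm (or trivially $k+1 = r$), and observing that strict monotonicity on that arm forces every position between $p$ and the corresponding endpoint to carry a value in $\{1,\dots,k\}$ and hence be removed---is precisely the natural justification the authors had in mind.
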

\begin{proposition}
\label{proposition:unimodal}
If $\sigma$ is unimodal, then
\begin{equation*}
\phi^*(n'_{\sigma(1)}, \dots, n'_{\sigma(r)}) = \phi^*(n'_1, \dots, n'_r).
\end{equation*}
\end{proposition}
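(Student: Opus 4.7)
The plan is to argue by strong induction on $r$. The base cases $r \leq 2$ are immediate from Proposition~\ref{prop low-run formulas}.a, since $\phi^*(n'_1) = 1$ and $\phi^*(n'_1, n'_2) = 1 + n'_1$ are symmetric in their arguments, so every permutation (in particular every unimodal one) produces the same value.

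For the inductive step with $r \geq 3$, I first invoke Lemma~\ref{lemma:unimodal} with $k = 0$ to conclude that the value $1$ occupies either position $1$ or position $r$ of $(\sigma(1), \ldots, \sigma(r))$. Reflection preserves unimodality and leaves $\phi^*$ invariant (Corollary~\ref{corollary:phisymmetry}), so I may assume without loss of generality that $\sigma(1) = 1$; then $(\sigma(2), \ldots, \sigma(r))$ is itself a unimodal permutation of $\{2, \ldots, r\}$. With $n'_1$ minimal in the first slot, Theorem~\ref{theorem:phi*Recurrence1} expands
\begin{equation*}
\phi^*(n'_1, n'_{\sigma(2)}, \ldots, n'_{\sigma(r)}) = \sum_{S \subset [2, r-1]} \binom{n'_1 + 1}{|S_1|}\, \phi_{S_1}^*(n'_{\sigma(2)}, \ldots, n'_{\sigma(r)}),
\end{equation*}
and gives the analogous expansion for $\phi^*(n'_1, n'_2, \ldots, n'_r)$. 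Since the polynomials $\binom{n'_1+1}{k}$ in $n'_1$ are linearly independent, matching the two sides reduces to showing, for each $k$, the coefficient identity $\sum_{|S_1| = k} \phi_{S_1}^*(n'_{\sigma(2)}, \ldots, n'_{\sigma(r)}) = \sum_{|S_1| = k} \phi_{S_1}^*(n'_2, \ldots, n'_r)$.

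Each $\phi_{S_1}^*$ is by Definition~\ref{defn:phi*S} a product of $\phi^*$'s on contiguous chunks of the tail, and because every contiguous sub-sequence of a unimodal sequence is unimodal, the inductive hypothesis lets me replace each chunk's $\phi^*$ by $\phi^*$ of its sorted version. The main obstacle is then to reconcile the two coefficient sums: the value sets of positional chunks of the unimodal tail need not be integer intervals of $\{2, \ldots, r\}$, whereas those of the identity tail always are, so the underlying set partitions genuinely differ. My plan to close this gap is to iterate Lemma~\ref{lemma:unimodal} recursively inside each chunk, exploiting the fact that the smallest value of any unimodal sub-sequence always sits at one of its ends, in order to construct an involution on ordered chunk compositions which, after suitable regrouping, realizes the term-swapping cancellations (of the type $n'_1 n'_i \leftrightarrow n'_1 n'_j$) visible in small examples such as $r = 4$. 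Producing this involution rigorously---so that the chunk products on the unimodal side can be matched to those on the sorted side---is the combinatorial heart of the argument and is where I expect the main difficulty to lie.
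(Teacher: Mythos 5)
Your proposal correctly identifies the key tool (Lemma~\ref{lemma:unimodal}) and makes a sensible reduction, but it stops short of a proof: as you yourself acknowledge, the coefficient identity
\begin{equation*}
\sum_{|S_1| = k} \phi_{S_1}^*(n'_{\sigma(2)}, \ldots, n'_{\sigma(r)}) \;=\; \sum_{|S_1| = k} \phi_{S_1}^*(n'_2, \ldots, n'_r)
\end{equation*}
is left unproven, and this is precisely where the work is. The difficulty you flag is real: after applying the inductive hypothesis to sort each unimodal chunk, the two sides consist of products of $\phi^*$'s over sorted value-multisets that do \emph{not} match term by term for fixed $S$, so some nontrivial reindexing (your ``involution'') is required. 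Your own $r=4$ example with $\sigma = (1,3,4,2)$ shows the terms literally swap. Constructing this involution is not a routine application of Lemma~\ref{lemma:unimodal}, and the proposal does not give a candidate map. A minor further point: the linear-independence remark about $\binom{n'_1+1}{k}$ is dispensable (you only need the easy direction, that the coefficient identity implies the desired equality), and as stated it is not quite valid since the recurrence of Theorem~\ref{theorem:phi*Recurrence1} only holds while $n'_1$ remains minimal, so $n'_1$ cannot be varied freely.

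The paper takes a cleaner route that avoids the coefficient-matching problem entirely: it works directly with the fully unwound tree expansion of Theorem~\ref{theorem:phi*formula}, in which each summand is indexed by a single rooted plane tree, rather than with the one-step recurrence of Theorem~\ref{theorem:phi*Recurrence1}. Since both the $\sigma$-side and the identity side are indexed by the same number $C_r$ of terms, it suffices to exhibit, for each Catalan degree sequence $\dd$, a tree $T$ whose $\sigma$-labeled polynomial $p_{T,\sigma}$ equals the target $[n'_1]^{d_1}\cdots[n'_r]^{d_r}$. The construction is a depth-first build of $T$ in which at each step vertex $i$ is placed at the leftmost or rightmost available leaf; Lemma~\ref{lemma:unimodal} guarantees exactly that after removing $1,\ldots,i-1$ from $(\sigma(1),\ldots,\sigma(r))$ the value $i$ sits at one end, so the placement is forced and well-defined. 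In effect, moving to the tree formula converts your ``involution on chunk compositions'' into a transparent greedy algorithm at the level of individual trees. If you want to push your recurrence-based route through, you would essentially need to rediscover this tree-level matching; the tree expansion is the natural home for that argument.
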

\begin{proof}
We show that the tree expansion for $\phi^*(n'_{\sigma(1)}, \dots, n'_{\sigma(r)})$ from Theorem \ref{theorem:phi*formula} contains every term in Theorem \ref{theorem:phi*increasing}; since both sums are indexed by sets of the same cardinality $C_r$, this proves the claimed equality.  Consider the term $p = [n'_1]^{d_1} \dots [n'_r]^{d_r}$, where $\dd \in \Dr.$  We construct a tree $T$ such that $\pT = p$ by the following procedure (recall Lemma \ref{lemma:unimodal}):
\begin{enumerate}
\item
Create vertex $1$ with $d_1$ children.  Initialize $s = (\sigma(1), \dots, \sigma(r))$, set $i = 2$, and remove $1$ from $s$.
\item
If $i$ is the left-most term in $s$, then label the left-most (relative to the depth-first traversal) leaf with $i$.  Similarly, if $i$ is the right-most term in $s$, label the right-most leaf with $i$.
\item
Add $d_i$ children to vertex $i$.
\item
If $i < r$, remove $i$ from $s$, increment $i$ by $1$, and return to step (2).
\end{enumerate}
This procedure is well-defined since Definition \ref{definition:Catalan} implies that there will always be unlabeled leaves available.  Furthermore, the result is a tree $T$ such that $\pT = p$ as desired.
\end{proof}

\begin{proposition}
If $\sigma$ is not unimodal, then $\phi^*(n'_{\sigma(1)}, \dots, n'_{\sigma(r)}) < \phi^*(n'_1, \dots, n'_r).$
\end{proposition}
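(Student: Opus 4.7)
Plan: I would combine the tree-expansion formula of Theorem \ref{theorem:phi*formula} with a refined analysis of the constructive proof of Proposition \ref{proposition:unimodal}. Both sides of the desired inequality decompose as sums of $C_{r-1}$ tree polynomials indexed by $\Tr$. Non-unimodality of $\sigma$ will be shown to force at least one tree polynomial in the $\sigma$-expansion to be strictly smaller than its counterpart in the identity expansion, which, combined with the known weak inequality (\ref{eq:phi*n'n}) from Section \ref{section:MainProof}, yields strict inequality.

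The first step is to normalize $\sigma$ via the dihedral symmetry of $\phi^*$ inherited from Corollary \ref{corollary:phisymmetry}, assuming $\sigma(1) = 1$ after cyclic rotation and, if needed, reflection. The contrapositive of Lemma \ref{lemma:unimodal} then yields a minimal \emph{obstruction index} $k \geq 2$: the smallest $k$ such that, after removing $1, 2, \dots, k-1$ from $(\sigma(1), \dots, \sigma(r))$, the value $k$ lies strictly in the interior of the remaining sub-sequence, flanked by some $p, q > k$. This $k$ is precisely the first step at which the inductive leaf-placement construction from Proposition \ref{proposition:unimodal}'s proof breaks down. Tracing this failure through Theorem \ref{theorem:phi*formula}: for each $\dd \in \Dr$ with $d_k > 0$, the construction produces a tree $T = T(\dd) \in \Tr$ whose labeling permutation $\sigma_T$ differs from the identity on the support of $\dd$, since label $k$ is forced onto a non-depth-first-$k$ vertex while some other label (coming from $p$ or $q$) lands on vertex $k$. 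By Proposition \ref{proposition:[n1]} together with the strict inequalities $n'_{k-1} < n'_k$, the product $\prod_i [n'_{\sigma_T(i)}]^{d_i}$ is then strictly smaller than the identity term $\prod_i [n'_i]^{d_i}$.

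The chief obstacle is aggregating these single-term strict inequalities into a strict comparison of the two full sums. A complete termwise matching of the two tree-polynomial multisets is the content of Conjecture \ref{conjecture:pTtau}, which remains open in general; one cannot naively conclude that a local decrease on one tree propagates to the entire sum without such a matching. A viable workaround is to combine the partial bijective matching from Proposition \ref{proposition:unimodal}'s proof, which remains valid on those trees whose initial $k-1$ construction steps encounter no obstruction, with the weak inequality (\ref{eq:phi*n'n}) applied to the obstruction-containing sub-family; the obstruction-induced strict decrease then cannot be absorbed by hidden surplus elsewhere, and the sum-level strict inequality follows. A fully satisfying direct proof via Theorem \ref{theorem:phi*formula} alone would essentially require settling Conjecture \ref{conjecture:pTtau}, at least in the non-unimodal case.
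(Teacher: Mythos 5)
Your proposal does not close the argument, and it takes a genuinely different route from the paper's. You correctly identify that aggregating single-term strict inequalities into a strict inequality of the two full sums would require a termwise matching of the tree-polynomial multisets, which is precisely the content of the open Conjecture \ref{conjecture:pTtau}. However, your proposed workaround does not repair this: the weak inequality (\ref{eq:phi*n'n}) compares the two \emph{full} sums, not arbitrary sub-families of trees, so ``applying it to the obstruction-containing sub-family'' is not a legitimate step, and the assertion that ``the obstruction-induced strict decrease cannot be absorbed by hidden surplus elsewhere'' is exactly what a proof would need to establish, not assume. The strict inequality $n'_{k-1} < n'_k$ you invoke in the term-wise comparison is also not available from the hypotheses, since the $n'_i$ are only assumed weakly increasing.

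The paper's own proof does not attempt a term-wise comparison at all, and in particular does not need Conjecture \ref{conjecture:pTtau}, so your closing claim that any direct argument via Theorem \ref{theorem:phi*formula} ``would essentially require settling Conjecture \ref{conjecture:pTtau}'' is mistaken. Instead, the paper uses Lemma \ref{lemma:unimodal} to produce a single $N > 1$ for which $\{N, N+1, \dots, r\}$ does not form a contiguous block of $(\sigma(1), \dots, \sigma(r))$, and then looks at the Catalan degree sequence with nonzero degrees $d_1 = N$, $d_N = r - N - 1$. The identity expansion of Theorem \ref{theorem:phi*increasing} contains the corresponding monomial $[n'_1]^N[n'_N]^{r-N-1}$, while the rotation-and-split rules in Definition \ref{defn:Tlabeling} prevent that monomial from arising as $\pT$ for any $T \in \Tr$ under the $\sigma$-labeling: to obtain the factor $[n'_N]^{r-N-1}$ one would need a subtree of size $r-N$ carrying a contiguous block of the rotated $\sigma$ with minimum $N$, hence a contiguous block inside $\{N, \dots, r\}$, which the choice of $N$ is used to forbid. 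The strict inequality is then read off from the presence of this monomial in one expansion and its provable absence from the other. The key structural difference to absorb is that the paper exhibits one \emph{missing monomial}, whereas you were trying to propagate a \emph{local strict decrease} through a matching that is not known to exist.
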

\begin{proof}
For such a $\sigma$, Lemma \ref{lemma:unimodal} implies that there exists an $N > 1$ such that there is no consecutive subsequence in $(\sigma(1), \dots, \sigma(r))$ that consists of precisely $\{N, N+1, \dots, r\}$.  The formula in Theorem \ref{theorem:phi*increasing} has a term corresponding to the Catalan sequence whose nonzero degrees are $d_1 = N, d_N = r-N-1$, but Definition \ref{defn:Tlabeling} implies that $\pT \neq [n'_1]^{N}[n'_N]^{r-N-1}$ for all $T$.  Thus Theorem \ref{theorem:phi*formula} implies that $\phi^*(n'_{\sigma(1)}, \dots, n'_{\sigma(r)})$ is less than $\phi^*$ for the identity permutation.
\end{proof}

% Section 4
\section{Catalan Structures and the Proof of Theorem \ref{Main Theorem}}
\label{section:MainProof}

In this section we prove the upper bound for $\phi$ from Theorem \ref{Main Theorem}.  In order to do so, we shift our attention from the exact formulas of Section \ref{section:phi*} to certain closely related {\it Catalan structures}; these are certain classes of labeled trees and bitstrings that will be defined below.  The key difference between these new structures and the non-crossing pairings is that the property of ``word rotation'' becomes locally restricted.  We exploit this by pre-rotating our non-crossing pairings (which have unrestricted word rotation as in Proposition \ref{prop rotation}) and then mapping them injectively to associated labeled trees.  The combinatorics of these trees and bitstrings then quickly lead to our claimed upper bounds.

We could simplify the following exposition somewhat by first appealing to Theorem \ref{theorem:phiphi*} and then restricting our attention to proving bounds only for $\phi^*$.  However, the connections between all noncrossing pairings and Catalan structures has independent combinatorial interest, so we instead present the general case.

\subsection{Noncrossing Pairings and Labeled Trees}

For notational convenience, define $\n := (n_1, \dots, n_r)$ and $\m := (m_1, \dots, m_r)$; throughout this section we require that $\n, \m \in \N_+^r$.  We say that $\n$ is {\it weakly increasing} if $n_i \leq n_{i+1}$ for all $i$. Furthermore, define the bitstring $w(\n, \m) := 1^{n_1} 0^{m_1} \dots 1^{n_r} 0^{m_r},$ and let
\begin{eqnarray*}
\phi(\n,\m) & := & \phi(w(\n, \m)),\\
\phi^*(\n) & := & \phi(\n, \n).
\end{eqnarray*}
(Note that this is a slight change in notation for the integer-argument representation of $\varphi$; we no longer interleave the $\n$ and $\m$.)  We will be particularly interested in the collection of words where $\n$ is fixed.
\begin{definition}
Let $W(\n)$ be the set of all balanced words $w = 0^{m_0} 1^{n_1} 0^{m_1} \ldots 1^{n_r} 0^{m_r}$ with $m_i \geq 0$ for $0 \leq i \leq r$.  Similarly, let $W^*(\n)$ be the set of all words $w = 0^a 1^{n_1} 0^{n_1} \ldots 1^{n_r} 0^{n_r-a}$ where $0 \leq a \leq n_r$.
\end{definition}
\begin{remark*}
The sets $W(\n)$ (respectively $W^*(\n)$) contain all words that are equivalent up to ``local rotation'' to a word of the form $1^{n_1} 0^{m_1} \dots 1^{n_r} 0^{m_r}$ (resp. $1^{n_1} 0^{n_1} \dots 1^{n_r} 0^{n_r}$).
\end{remark*}

For any pairing on a word in $W(\n)$, we will define an associated tree that is
labeled with additional data describing the pairing.  If $d,n \geq 0$ are integers, a
{\em label} of {\em degree} $d$ and and {\em weight} $n$ is a $(d+1)$-tuple $l =
(l_i: 0 \leq i \leq d)$ of non-negative integers with $l_j \geq 1$ for all $0 < j <
d$ and $\sum_{j=0}^d l_j = n$.  Elementary arguments give the number of labels of degree $d$ and
weight $n$ as ${n + 1 \choose d}$.  Recall from Section \ref{subsection:Treephi*} that a tree $T \in \Tr$ has an associated degree sequence $\dT$.
\begin{definition}
Let $\LT(\n)$ be the set of all pairs $(T,L)$ where $T \in \Tr$ and $L$ is a labeling function on the vertices of $T$ that assigns vertex $i$ to a label $L_i$ of degree $d_i$ and weight $n_i$.  Each such $(T,L)$ is called a {\em labeled tree}.
\end{definition}
Since there are ${ n_i +1 \choose d_i}$ ways to choose a label $L_i$ for vertex $i$ in $T$, we immediately obtain a closed formula for the number of labeled trees.
\begin{proposition}
\label{proposition:LTformula}
For any $\n$,
\begin{equation*}
|\LT(\n)| = \sum_{T \in \Tr} \prod_{i=1}^r {n_i + 1\choose d_i}.
\end{equation*}
\end{proposition}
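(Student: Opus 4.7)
The plan is to compute $|\LT(\n)|$ directly, partitioning labeled trees by their underlying tree shape and observing that, once $T$ is fixed, the choice of label at each vertex is independent. So the strategy is: sum over $T \in \Tr$ the number of valid labelings $L$, and show that the inner count factors as $\prod_i \binom{n_i+1}{d_i}$.

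The one technical point is to verify the claim, stated just before the definition, that the number of labels of degree $d$ and weight $n$ equals $\binom{n+1}{d}$. First I would dispose of the small cases: if $d=0$ the only tuple is $(n)$, giving one label and matching $\binom{n+1}{0}=1$; if $d=1$ we are counting pairs $(l_0,l_1)$ of non-negative integers with $l_0+l_1=n$, giving $n+1=\binom{n+1}{1}$. For $d \geq 2$, I would apply a stars-and-bars argument: substitute $l_j' = l_j - 1$ for $0 < j < d$, so that the constraints $l_0, l_d \geq 0$ and $l_j \geq 1$ for $0 < j < d$ become a non-negative integer solution count for
\[
l_0 + l_1' + \dots + l_{d-1}' + l_d = n - (d-1),
\]
a system in $d+1$ variables. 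The number of solutions is $\binom{(n-d+1)+d}{d} = \binom{n+1}{d}$, as required.

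Now for the main count: fix $T \in \Tr$ with degree sequence $\dT = (d_1,\dots,d_r)$. A labeling $L$ assigns each vertex $i$ a label $L_i$ of degree $d_i$ and weight $n_i$, and the choices at distinct vertices are unconstrained by one another (the only constraints built into the definition are local to each vertex). Hence the number of labelings compatible with $T$ is the product $\prod_{i=1}^r \binom{n_i+1}{d_i}$. Summing over $T \in \Tr$ and using the partition
\[
\LT(\n) = \bigsqcup_{T \in \Tr}\bigl\{(T,L) : L \text{ is a valid labeling of } T\bigr\}
\]
yields $|\LT(\n)| = \sum_{T \in \Tr}\prod_{i=1}^r \binom{n_i+1}{d_i}$.

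There is no real obstacle here: the proof is a straightforward application of elementary enumerative combinatorics, and essentially unpacks the remark preceding the statement. The only place where care is needed is the stars-and-bars verification of the label count, where the boundary conditions $l_0,l_d \geq 0$ (versus $\geq 1$ in the interior) must be handled correctly via the shift substitution.
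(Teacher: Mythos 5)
Your proposal is correct and matches the paper's argument: the paper simply asserts that there are $\binom{n_i+1}{d_i}$ labels of degree $d_i$ and weight $n_i$ at each vertex (leaving the count as an ``elementary argument'') and then sums over tree shapes, exactly as you do. Your stars-and-bars verification of the label count, including the careful shift to account for the interior constraints $l_j \geq 1$, correctly fills in the detail the paper omits.
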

Comparing this to Theorem \ref{theorem:phi*formula}, we observe that here there is no dependence on the relative magnitude of the $n_i$'s.  This difference is the key reason that translating noncrossing pairings to labeled trees results in such a simplification of the problem of finding bounds.  The next result describes an important relation between noncrossing pairings and labeled trees, along with an immediate implication on cardinalities.
\begin{theorem}
\label{theorem:WLT}
For all $\n$ , and $w \in W(\n)$ there is an injection $T_{w,\n} : NC_2(w) \rightarrow \LT(\n)$.   If $\n$ is weakly increasing and $w \in W^*(\n)$, then $T_{w,\n}$ is a bijection.
\end{theorem}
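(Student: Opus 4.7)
The plan is to construct $T_{w,\n}$ by recursion on the number of runs $r$, decomposing $\pi \in NC_2(w)$ according to the pairings of the first run of $1$s, with injectivity following by induction and the bijection claim following from a cardinality argument via Theorem \ref{theorem:phi*increasing} and Proposition \ref{proposition:LTformula}.

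For the decomposition, consider $w = 0^{m_0} 1^{n_1} 0^{m_1} \cdots 1^{n_r} 0^{m_r}$ and $\pi \in NC_2(w)$. The non-crossing condition forces the $n_1$ ones of the first run to split into a prefix of $a \geq 0$ left-pairing $1$s (matched in nested reverse order to the rightmost $a$ zeros of $0^{m_0}$, trapping no bits in between) and a suffix of $b = n_1 - a$ right-pairing $1$s nested with right-pairing zeros $Q_1 < \cdots < Q_b$. A key structural observation is that within any single $0$-run that receives some $Q_k$'s, the $Q_k$'s must be consecutive zeros of that run, since any interval $(Q_k, Q_{k+1})$ trapped inside a single $0$-run contains only $0$s and is forced to be empty by balance. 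The right-pairing zeros therefore group into $c$ clusters, one per touched $0$-run, and the only sub-intervals between consecutive right-pairing zeros that may contain further $1$-runs are the inter-cluster intervals together with the two outer intervals $I_0$ and $I_b$; on each such non-trivial interval, $\pi$ restricts to a non-crossing pairing of a balanced substring involving a proper subset of the $1$-runs.

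I then build $(T,L)$ with root $v$ of degree $d_1$ equal to the number of non-trivial intervals, and label $L_v = (l_0, l_1, \dots, l_{d_1})$ encoding the first-run pairing data: $l_0 = a$, the interior entries $l_1, \dots, l_{d_1 - 1}$ record the numbers of first-run $1$s pairing into each ``intermediate'' touched $0$-run in left-to-right order, and $l_{d_1}$ records the count for the ``terminal'' touched run (which is allowed to vanish). The interior entries are automatically $\geq 1$ because each such intermediate touched run serves as a genuine separator between $1$-run clusters, matching the label axioms. Each non-trivial interval, after rotation to standard form, becomes a word in $W(\n^{(k)})$ for an appropriate subvector $\n^{(k)}$ of $\n$, and the inductive hypothesis yields a labeled tree in $\LT(\n^{(k)})$ that becomes the $k$-th subtree of $v$. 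Injectivity then follows by induction, since $L_v$ and the subtree sizes together reconstruct $a$, $b$, the indices of touched $0$-runs, and the distribution of first-run $1$s among them, so that the inductively reconstructed sub-pairings recover $\pi$ uniquely.

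For the bijection claim, when $\n$ is weakly increasing and $w \in W^*(\n)$, Proposition \ref{prop rotation} gives $|NC_2(w)| = \phi^*(\n)$, and Theorem \ref{theorem:phi*increasing} combined with Proposition \ref{proposition:LTformula} gives $\phi^*(\n) = |\LT(\n)|$; since $T_{w,\n}$ is an injection between finite sets of equal cardinality, it must be a bijection. The main obstacle will be the careful bookkeeping needed to verify that each extracted interval substring, which is generally not in the restrictive $W^*$ form because the first-run pairings consume interior $0$s of touched runs and leave leftover boundary zeros, fits naturally into a subproblem in $W(\n^{(k)})$; this is precisely why the recursion must close on the more permissive domain $W(\n)$ rather than $W^*(\n)$.
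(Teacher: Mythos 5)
Your construction of the injection $T_{w,\n}$ is essentially the paper's: both decompose $\pi$ according to where the $0$s paired with the first run $1^{n_1}$ land, use the cluster sizes as the root label, and recurse on the balanced substrings between clusters to build the subtrees, with the leading-$0$ rotation handled at each recursive call precisely so that the induction closes on the permissive domain $W(\n)$ rather than $W^*(\n)$ (you correctly flag this as the key bookkeeping issue). The injectivity argument is also the same as the paper's: the root label and the sizes encoded in the subtrees determine the decomposition, and the sub-pairings are recovered inductively. (Your convention of placing $a$ at $l_0$ rather than at $l_d$ is a harmless re-indexing of the same label data.)

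The genuinely different step is your treatment of the bijection claim. The paper proves bijectivity constructively: it shows (its claim (3)) that for weakly increasing $\n$ and $w\in W^*(\n)$, any $T\in\LT(\n)$ arises as $T_{w,\n}(\pi)$, by exhibiting the required $\pi$ and noting that the height constraints on the $0^{\ell_i}$ can always be met because $\n$ is weakly increasing and $w$ is symmetric. You instead argue by cardinality: rotational invariance (Proposition \ref{prop rotation}) gives $|NC_2(w)|=\phi^*(\n)$ for $w\in W^*(\n)$, and Theorem \ref{theorem:phi*increasing} together with Proposition \ref{proposition:LTformula} gives $\phi^*(\n)=|\LT(\n)|$ when $\n$ is weakly increasing, so the already-established injection between finite sets of equal size is a bijection. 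This is valid and not circular, since Theorem \ref{theorem:phi*increasing} is proved in Section \ref{section:phi*} from the recurrence of Theorem \ref{theorem:phi*Recurrence1}, independently of any labeled-tree considerations. The trade-off is that the paper's route produces the explicit inverse (useful for, e.g., the proof of Theorem \ref{theorem:CFLT}), whereas your cardinality shortcut is quicker but gives no structural description of the preimage of a given labeled tree.
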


\begin{corollary}
\label{corollary:phiLT}
For all $\n,\m$, we have $\phi(\n,\m) \leq |\LT(\n)|$.  If $\n$ is weakly increasing, then $\phi^*(\n) = |\LT(\n)|$.
\end{corollary}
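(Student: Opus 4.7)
The plan is short because this corollary follows almost immediately from Theorem \ref{theorem:WLT} by taking cardinalities; the real work lies in constructing the injection $T_{w,\n}$ and verifying the bijection property, which is already absorbed into the statement of the theorem. So the proposal below just amounts to identifying the correct bitstring in $W(\n)$ or $W^*(\n)$ to plug in and then reading off a cardinality inequality (or equality).

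For the first assertion, I would observe that the word $w(\n,\m) = 1^{n_1} 0^{m_1} \cdots 1^{n_r} 0^{m_r}$ lies in $W(\n)$ (it is obtained by taking $m_0 = 0$ in the description of $W(\n)$). Theorem \ref{theorem:WLT} then yields an injection $T_{w(\n,\m),\,\n}\colon NC_2(w(\n,\m)) \hookrightarrow \LT(\n)$, and passing to cardinalities gives
\[
\phi(\n,\m) \;=\; |NC_2(w(\n,\m))| \;\le\; |\LT(\n)|.
\]

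For the second assertion, suppose $\n$ is weakly increasing. Then $w(\n,\n) = 1^{n_1} 0^{n_1} \cdots 1^{n_r} 0^{n_r}$ lies in $W^*(\n)$, by taking the parameter $a = 0$ in the description of $W^*(\n)$. The second half of Theorem \ref{theorem:WLT} then guarantees that $T_{w(\n,\n),\,\n}$ is a bijection, so
\[
\phi^*(\n) \;=\; |NC_2(w(\n,\n))| \;=\; |\LT(\n)|.
\]

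The only mild subtlety is checking that the chosen word really sits in the appropriate class, but this is immediate from the definitions of $W(\n)$ and $W^*(\n)$, so I do not expect any obstacle. Everything nontrivial, including the combinatorial construction of the labeled tree from a non-crossing pairing and the verification that the map is injective (respectively bijective for weakly increasing $\n$ and $w \in W^*(\n)$), has been deferred to the proof of Theorem \ref{theorem:WLT}.
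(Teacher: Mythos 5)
Your proof is correct and matches the paper's (implicit) approach: the corollary is stated immediately after Theorem \ref{theorem:WLT} with no separate proof, precisely because it follows by specializing the theorem to the words $w(\n,\m) \in W(\n)$ and $w(\n,\n) \in W^*(\n)$ and taking cardinalities, exactly as you do. Your check that these words lie in $W(\n)$ (via $m_0=0$) and $W^*(\n)$ (via $a=0$) respectively is the only verification needed, and it is indeed immediate.
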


\begin{proof}[Proof of Theorem \ref{theorem:WLT}] The first half of the proof is devoted to the construction of $T = T_{w,\n}(\pi) \in \LT(\n)$ for a given $\n$, a word $w \in W(\n)$, and a pairing $\pi \in NC_2(w)$.  The second half of the proof verifies that the definition is bijective by reversing the map.  We begin by finding certain substrings in $w$ that correspond to subtrees in $T_{w, \n}(\pi)$.

Let $s \geq 0$ be the number of leading $0$s of $w$, and rotate by $s$ so that the resulting word $w' = \Rot_s(w)$ begins with $1^{n_1}$. Denote the rotated pairing by $\pi' = \Rot_s(\pi) \in NC_2(w')$.  Let $X := \{1, \dots, n_1\}$ be the set of positions in $w'$ occupied by the block $1^{n_1}$, and define $Y$ to be the set of positions that are paired to positions in $X$ by $\pi'$.   Let $w'_1, \ldots, w'_d$ be the set of all non-empty substrings of $w'$ that lie between two circularly consecutive positions in $X \cup Y$, listed in the order that they appear in $w'$.  We have defined the decomposition
\begin{equation}
w' = 1^{n_1} 0^{\ell_0} w'_1 0^{\ell_1} \ldots 0^{\ell_{d-1}}  w'_d 0^{\ell_d},
\end{equation}
where the $0^{\ell_i}$ represent the positions in $Y$, and satisfy $\ell_i \geq 1$ for $0 < j < d$, while $\ell_0, \ell_d \geq 0.$  Moreover, $\sum_{j=0}^d \ell_j = n_1$, so $\Ell=(\ell_0, \ldots, \ell_d)$ is a label of degree $d$ and weight $n_1$ (note that $d=0$ iff $w'= 1^{n_1} 0^{n_1}$).

Let $B$ be the set of $r-1$ distinct blocks $1^{n_j}$ in $w'$ for $2 \leq j \leq r$.  All positions of $w'$ not belonging to the $w'_i$ are $0$'s or members of the block $1^{n_1}$, and no two $w'_i$ are cyclically (i.e.\ ``circularly'') adjacent, so each member of $B$ must be contained in a unique $w'_i$.  Since $\pi$ is non-crossing it induces $\pi'_i \in NC_2(w'_i)$ for all $1 \leq i \leq d$, and as a consequence, each $w'_i$ is balanced. Since each $w'_i$ is also non-empty, it must contain at least one $1$, and hence at least one block in $B$.  Thus $d \leq r-1$, so we set $d_1 = d$ and assign the root label $L_1 = \Ell$ in $T_{w,\n}(\pi)$.

For $1 \leq i \leq d$, let $j_i$ be the maximum $2 \leq j \leq r$ such that $1^{n_j}$ is contained in $w'_i$.  Thus we have a uniquely defined sequence $1 = j_0 < \cdots < j_d = r$ so that $w'_i$ contains blocks $1^{n_t}$ for $j_{i-1}+1 \leq t \leq n_j$. We have $w'_i \in W(\n_i)$ where $\n_i := (n_{j_{i-1}+1}, \ldots, n_{j_i})$.  The definition of $T = T_{w,n}(\pi)$ is now completed recursively: for $1 \leq i \leq d$, the subtree $T_i$ that is attached to the $i$-th child of the root defined to be $T_{w'_i,\n_i}(\pi'_i)$.

The remainder of the proof is spent verifying the following claims:
\begin{enumerate}
 \item $T_{w,\n}(\pi) \in \LT(\n)$,
 \item $\pi$ can be reconstructed from $\n$, $w$, and $T_{w,\n}(\pi)$,
 \item If $\n$ is weakly increasing, $w \in W^*(\n)$ and $T \in \LT(\n)$, then it is possible to construct $\pi \in NC_2(w)$ so that $T_{w,\n}(\pi) = T$.
\end{enumerate}
The first two show that $T_{w, \n}$ is injective, while the last shows that it is bijective when $\n$ is weakly increasing.  We proceed by induction on $r$, as the statements are certainly true for $r=1$.  In this case, each $w \in W(\n) = W^*(\n)$ has only one possible pairing $\pi \in NC_2(w)$, and $T_{w,\n}(\pi)$ produces the unique member of $LT(\n)$, a tree consisting only of the root, which has the label $(n_1)$.  Now suppose that $r > 1.$ \\

\noindent (1) The preceding construction produces the tree $T_{w,\n}(\pi)$, whose root is label $\Ell$ has weight $n_1$ and degree $d_1$.  The root has $d_1$ children, and the subtree rooted at the $i$th child, $T_{w'_i,\n_i}(\pi'_i)$, is a member of $LT(\n_i)$ by the inductive hypothesis.  Since $\n$ is simply the concatenation of $n_1$ and all of the $\n_i$ ($1 \leq i \leq d_1$), we have $T_{w,\n}(\pi) \in LT(\n)$, as labeled trees have the obvious recursive structure. \\

\noindent (2) Let $s$ be the number of leading $0$s of $w$ and $w' = \Rot_s(w)$.   For $1 \leq i \leq d$, let $T_i$ be the subtree of $T$ rooted at the $i$th child.  Set $1 = j_0 < \cdots < j_d = r$ so that the vertices of $T_i$ are $[j_{i-1}+1,j_i]$, and set $\n_i = (n_{j_{i-1}+1}, \ldots, n_{j_i})$ as before.

We now recover the strings $w'_i$ from $w'$ by using each $\n_i$ to calculate that $w'_i$ has length
\begin{equation*}
2 \sum_{s = j_{i-1}+1}^{j_i} n_s.
\end{equation*}
Furthermore, the interspersed runs $0^{\ell_i}$ are determined by $T$'s root label $L_1$.  By induction, we can then reconstruct each $\pi'_i$ from $\n_i$,  $w'_i$ and $T_i$.  Finally, the pairings of $1^{n_1}$ in $w'$ are uniquely determined by the fact that $\pi'$ is non-crossing, so we can reconstruct $\pi = \Rot_{-s}(\pi')$.

\noindent (3)  To achieve our goal of finding $\pi$ such that $T_{w',\n}(\pi') = T$, we certainly must have the correct root label $L_1$.  This implies that if $w' = 1^{n_1} 0^{\ell_0} w'_1 0^{\ell_1} \ldots 0^{\ell_{d-1}} w'_d 0^{\ell_d}$, then the block $1^{n_1}$ is paired to all of the $0^{\ell_i}$'s.  The heights of these $0$'s therefore must be $n_1$ through $1$ consecutively, which is always possible in $w$ since $\n$ is weakly increasing and $w \in W^*(\n)$ is symmetric.  Furthermore, this choice ensures that $w'_i \in W^*(\n_i)$, and
\begin{equation}
\Rot_{\ell_i + \dots + \ell_d}(w'_i) = (1^{n_{j_{i-1}+1}}, 0^{n_{j_{i-1}+1}}, \dots, 1^{n_{j_i}}, 0^{n_{j_i}}).
\end{equation}
By induction, $\pi'_i \in NC_2(w'_i)$ can now be found so that $T_{w'_i,\n_i}(\pi'_i) = T_i$ for all $i$, completing the inverse map.
\end{proof}

We can now prove an upper bound for $\phi(\n, \m)$ by proving one for $|\LT(\n)|$, which we accomplish by bijectively translating labeled trees to the words defined in the next section.

\subsection{Catalan words}

The following definition is a simple generalization of combinatorics commonly associated with the Fuss-Catalan numbers.
\begin{definition}
A word $1^{n_1} 0^{m_1} \dots 1^{n_r} 0^{m_r} \in W(\n)$ is {\it Catalan} if for all $i$,
\begin{align*}
n_1 - m_1 + n_2 - m_2 + \dots + n_i - m_i \geq 0 \qquad & \text{if} \; 1 \leq i \leq r-1, \\
n_1 - m_1 + n_2 - m_2 + \dots + n_r - m_r = 0.
\end{align*}
The set of all Catalan words in $W(\n)$ is denoted by $\CF(\n).$
\end{definition}
\begin{remark*}
Catalan words are commonly associated with {\it Dyck paths} in the Cartesian plane: the $1$s correspond to unit steps in the $x$-direction, the $0$s correspond to unit steps in the $y$-direction, and the resulting paths all travel from $(0,0)$ to $(N, N)$ while remaining under the line $y = x.$
\end{remark*}

Each Catalan word admits a canonical noncrossing pairing with a number of special properties that we will use to construct a correspondence to labeled trees.
\begin{definition}
For every $w \in \CF(\n)$ the {\em first return pairing} $\pi_0(w) \in NC_2(w)$ is given by
\begin{equation*}
\pi_0(w) := \{ (i,j) : w_i = 1, j = \min(j': j' > i, h_w(j') = h_w(i))\}.
\end{equation*}
\end{definition}
This pairing always exists and is well-defined since for any Catalan word $w$, the
first $1$ has height $1$, there are no $0$s with height less than $1$, and if a
chosen $1$ has height $h$, the next character with height $h$ must be a $0$ (see
Figure \ref{fig:firstret}).
\begin{figure}[htbp]
\begin{center}
	\includegraphics[width=0.7\textwidth]{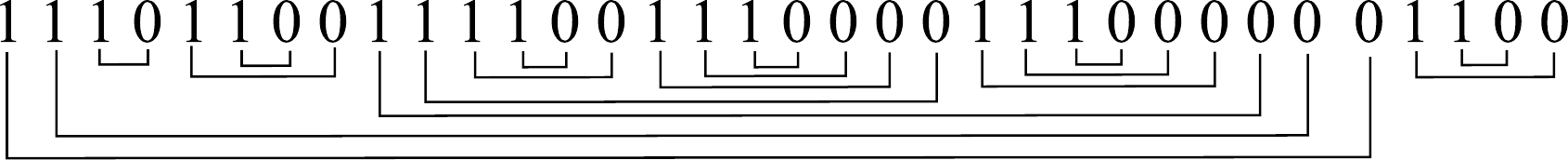}
	\caption{The first return pairing for the string
	$1^301^20^21^40^21^30^41^30^61^20^2$.}
\label{fig:firstret}
\end{center}
\end{figure}

\begin{theorem}
\label{theorem:CFLT}
For all $\n$, the map $f_{\n} : \CF(\n) \rightarrow \LT(\n)$ defined by $f_{\n}(w) := T_{w,\n}(\pi_0(w))$ is a bijection.
\end{theorem}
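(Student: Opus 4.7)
The plan is to construct an explicit inverse $g_\n : \LT(\n) \to \CF(\n)$ of $f_\n$ by recursion on $r$. Given a labeled tree $(T,L) \in \LT(\n)$ with root label $L_1 = (\ell_0,\dots,\ell_{d_1})$ of weight $n_1$, Lemma \ref{lemma:dforest} decomposes the non-root vertices into subtrees $T_1,\dots,T_{d_1}$ whose vertex sets give consecutive weight sequences $\n_1,\dots,\n_{d_1}$ partitioning $(n_2,\dots,n_r)$. Inductively set $w_i := g_{\n_i}(T_i, L|_{T_i}) \in \CF(\n_i)$, and define
\[
g_\n(T,L) \; := \; 1^{n_1} 0^{\ell_0} w_1 0^{\ell_1} \cdots w_{d_1} 0^{\ell_{d_1}}.
\]
This word has up-run structure $\n$ by construction, and since $\sum_i \ell_i = n_1$ and each $w_i$ is Catalan by induction, the prefix sums of $g_\n(T,L)$ remain non-negative, so $g_\n(T,L) \in \CF(\n)$.

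To verify $f_\n \circ g_\n = \mathrm{id}$, set $w := g_\n(T,L)$ and write $h_i := n_1 - (\ell_0 + \cdots + \ell_{i-1})$ for the base height at which $w_i$ begins and ends, so that the $0$'s in the run $0^{\ell_i}$ have heights spanning $(h_{i+1}, h_i]$ and these intervals partition $[1,n_1]$. The crucial observation is that every $0$ interior to $w_i$ has height strictly greater than $h_i$: since $w_i$ itself is Catalan, every $0$-position $j$ inside $w_i$ satisfies $Y_j \geq h_i$ and hence has $0$-height $\geq h_i+1$. Consequently, for each $k \in (h_{i+1}, h_i]$, the first $0$ at height $k$ appearing after position $k$ (the $k$-th $1$ of $1^{n_1}$) must lie in the run $0^{\ell_i}$, specifically at the unique position there of height $k$. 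Thus the first return pairing $\pi_0(w)$ matches $X = \{1,\dots,n_1\}$ exactly with the union $Y$ of the $0^{\ell_i}$ positions, and the construction of $T_{w,\n}$ then reads off the root label as $(\ell_0,\dots,\ell_{d_1})$ with subtrees arising from the induced first-return pairings of the $w_i$; by the inductive hypothesis these subtrees agree with $(T_i, L|_{T_i})$, giving $f_\n(w) = (T,L)$.

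The reverse identity $g_\n \circ f_\n = \mathrm{id}$ is then essentially automatic: any $w \in \CF(\n)$ starts with $1^{n_1}$ (no leading $0$'s), so in the construction of $T_{w,\n}(\pi_0(w))$ we have $w = w'$, and reading off the root label together with the non-empty substrings between consecutive $Y$-positions recovers exactly the $\ell_i$'s and sub-Catalan words that $g_\n$ recombines. The main technical obstacle is the height analysis underlying the second paragraph, namely showing that the first-return pairing of the recursively built word separates cleanly into an outer matching of $1^{n_1}$ with the $0^{\ell_i}$ runs and independent inner matchings inside each $w_i$; once this separation is established, the rest of the argument is a matter of unwinding the inductive definitions on both sides.
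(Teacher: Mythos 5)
Your proposal is correct and follows essentially the same approach as the paper: you construct the same recursive inverse $g_{\n}$, verify it lands in $\CF(\n)$ via nonnegativity of prefix sums, and establish $f_{\n}\circ g_{\n} = \mathrm{id}$ by the same height-level observation that the $0^{\ell_i}$ runs provide the first returns at heights $n_1$ down to $1$. Your explicit interval analysis with the $h_i$'s makes that step more transparent than the paper's terse statement, but the underlying argument is identical.
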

\begin{proof}
We inductively construct the inverse mapping $g_{\n}: \LT(\n) \rightarrow \CF(\n)$.  If $k=1$, there is only one tree in $\LT(\n)$, a single vertex with label $(n_1)$, and $g_{\n}$ maps it to the only word in $\CF{\n}$, namely $w = 1^{n_1} 0^{n_1}$.  Thus $f_{\n}$ and $g_{\n}$ are mutual inverses.

If $k > 1$, we define
\begin{equation}
g_{\n}(T) := 1^{n_1} 0^{\ell_0} g_{\n_1}(T_1) 0^{\ell_1} \ldots 0^{\ell_{d-1}} g_{\n_d}(T_d) 0^{\ell_d}
\end{equation}
where $\Ell  = (\ell_0, \ldots,\ell_d)$ is the label of the root, $n_1$ is the weight of that label, $T_i$ the subtree of $T$ rooted at the $i$th child of the root, and $\n_i = (n_{j_{i-1}+1}, \ldots, n_{j_i})$ is again the sequence of label weights of the vertices of $T_i$ listed in clockwise order.  By induction, we may assume each $g_{\n_i}(T_i) \in CF(n'_i)$.  Therefore $g_{\n}(T) \in \CF(\n)$.

We now complete the inductive step by showing that $f_{\n}$ and $g_{\n}$ are mutual inverses.  If we first apply $g_{\n}$ to a given $T$, the fact that the $g_{\n_i}(T_i)$ are Catalan implies that the $n_1$ $0$s represented by the $0^{\ell_i}$s in $g_{\n}(T)$ are the first $0$'s in $w$ that occur at heights $n_1$ through $1$ (in order).  These $0$'s are paired by $\pi_0(g_{\n}(T))$ to the $1$s at the corresponding heights in $1^{n_1}$.  Thus the tree $T' = f_{\n}(g_{\n}(T)) = T_{w,\n}(\pi_0(w))$ has a root with $d$ children that is labeled by $\Ell$, and $T'_i$, the subtree of the $i$th child, is $f_{\n_i}(g_{\n_i}(T_i))$.  Note that there is no rotation necessary in the recursive calls to $T_{w, \n_i}$, since $g_{\n_i}(T'_i)$ is inductively a Catalan word and thus begins with a string of $1$s.  Appealing to the inductive hypothesis finally gives $f_{\n_i}(g_{\n_i}(T_i)) = T_i$, so $f_{\n}(g_{\n}(T)) = T' = T$.

On the other hand, if we begin by applying $f_{\n}$ to $w \in \CF(\n)$, then we must first write
\begin{equation}
\label{equation:ww'}
 w = 1^{n_1} 0^{l_0} w'_1 0^{\ell_1} \ldots 0^{\ell_{d-1}} w'_d 0^{\ell_d},
\end{equation}
where the $0^{l_i}$ are the first $0$s in $w'$ of heights $1$ through $n_1$ and the $w'_i$ are non-empty words in $\CF(\n_i)$; as usual, $\n_i = (n_{j_{i-1}+1}, \ldots, n_{j_i})$ for the indices $1 = j_0 < \cdots < j_d = k$ that mark the ends of the $w'_i$s.  Since $\pi_0(w)$ pairs each of the $0$'s in the $0^{\ell_i}$ with the $1$ of the corresponding height in $1^{n_1}$, we obtain $T = f_{\n}(w) = T_{w,\n}(\pi_0(w))$, which is a tree with $d$ children whose root has label $\Ell$. The subtree $T_i$ rooted at the $i$-th child is $T_{w'_i,\n_i}(\pi_0(w'_i)) = f_{\n_i}(w'_i)$.  Thus
\begin{equation}
\label{equation:Wgn}
W := g_{\n}(T) = 1^{n_1} 0^{\ell_1} g_{\n_i}(T_1) 0^{\ell_2} \ldots 0^{\ell^{d-1}} g_{\n_d}(T_d) 0^{\ell_d}.
\end{equation}
By induction, we may assume that $g_{\n_i}(T_i) = g_{\n_i}(f_{\n_i}(w'_i)) = w'_i$.  Combining this with (\ref{equation:ww'}) and (\ref{equation:Wgn}) implies that $g_{\n}(f_{\n}(w)) = W = w$, completing the proof.
\end{proof}

\begin{definition}
If $a,b \in \Te^r$, we say that $a$ {\em dominates} $b$, denoted $a \succcurlyeq b$ if
\begin{equation*}
\sum_{i=1}^j a_i \geq \sum_{i=1}^j b_i
\end{equation*} for all $1 \leq j \leq r$.
\end{definition}
\noindent We will make great use of the simple observation that domination is transitive.
\begin{proposition}
\label{proposition:dom}
If $a \succcurlyeq b$ and $b \succcurlyeq c,$ then $a \succcurlyeq c.$
\end{proposition}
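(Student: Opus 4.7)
The plan is to prove Proposition \ref{proposition:dom} directly from the definition, since the statement is essentially an instance of the transitivity of the real order $\geq$ applied to partial sums. There is no obstacle to speak of; the only thing to do is to chain the two families of inequalities together in the correct order.

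More precisely, I would fix any index $j$ with $1 \leq j \leq r$ and form the two partial sums $\sum_{i=1}^j a_i$, $\sum_{i=1}^j b_i$, and $\sum_{i=1}^j c_i$. By the hypothesis $a \succcurlyeq b$, applied at index $j$, we have
\[
\sum_{i=1}^j a_i \geq \sum_{i=1}^j b_i,
\]
and by the hypothesis $b \succcurlyeq c$, applied at the same index $j$, we have
\[
\sum_{i=1}^j b_i \geq \sum_{i=1}^j c_i.
\]
Chaining these two inequalities via the transitivity of $\geq$ in $\Te$ yields
\[
\sum_{i=1}^j a_i \geq \sum_{i=1}^j c_i.
\]

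Since $j$ was arbitrary in $\{1,\ldots,r\}$, this establishes $a \succcurlyeq c$ by the definition of domination, completing the proof. The only place one needs to be slightly careful is to notice that the inequalities must be applied coordinatewise at the \emph{same} index $j$ before chaining, but since both hypotheses give the desired inequality for every $j$ independently, there is no interaction between different coordinates and no delicate step in the argument.
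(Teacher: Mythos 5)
Your proof is correct and is exactly the direct argument from the definition that the paper has in mind; in fact the paper states Proposition \ref{proposition:dom} without proof, regarding it as immediate. Chaining the partial-sum inequalities at each fixed index $j$ and appealing to transitivity of $\geq$ on $\Te$ is all that is needed, and you have done that cleanly.
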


We have already implicitly used the notion of domination in defining Catalan words, as $w(\n,\m) \in \CF(\n)$ if and only if $\n$ dominates $\m$ and $\sum_{i=1}^r n_i  = \sum_{i=1}^r m_i$.  In fact, this relation runs much deeper, as the space of Catalan words also satisfies a very important ordering property with respect to domination.
\begin{lemma}
\label{lemma:CFnCFn'}
For all $r \geq 1$ and $\n,\n' \in \N_+^r$, if $\n' \succcurlyeq \n$ then
\[|\CF(\n)| \leq |\CF(\n')|.\]
\end{lemma}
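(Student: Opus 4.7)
The plan is to construct an explicit injection $\iota \colon \CF(\n) \to \CF(\n')$ that places all the ``extra'' zeroes at the very end of the word. Setting $N_j := n_1 + \cdots + n_j$, $N'_j := n'_1 + \cdots + n'_j$, and $\Delta := N'_r - N_r \ge 0$ (the inequality follows from $\n' \succcurlyeq \n$ applied with $j = r$), I would define
\begin{equation*}
\iota\bigl(1^{n_1} 0^{m_1} \cdots 1^{n_r} 0^{m_r}\bigr) := 1^{n'_1} 0^{m_1} 1^{n'_2} 0^{m_2} \cdots 1^{n'_r} 0^{m_r + \Delta}.
\end{equation*}
In other words, the lengths of the $1$-runs are replaced by the new values $n'_i$, all interior $0$-runs are kept the same, and the required excess of $\Delta$ zeroes is appended to the final $0$-block.

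The first step would be to verify $\iota(w) \in \CF(\n')$. Writing $M_j := m_1 + \cdots + m_j$ for the partial sums of the original $0$-runs and $\widehat{M}_j$ for those of the image, we have $\widehat{M}_j = M_j$ for $j < r$ and $\widehat{M}_r = M_r + \Delta = N_r + \Delta = N'_r$. The Catalan partial-sum conditions on $\iota(w)$ read $N'_j \ge \widehat{M}_j$ for $j < r$, with equality at $j = r$. For $j < r$ I would chain
\begin{equation*}
N'_j \ge N_j \ge M_j = \widehat{M}_j,
\end{equation*}
where the first inequality uses $\n' \succcurlyeq \n$ and the second uses $w \in \CF(\n)$; the equality at $j = r$ holds by construction. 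Non-negativity of $m_r + \Delta$ is automatic since both summands are non-negative.

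Injectivity is immediate: reading off the first $r-1$ interior $0$-runs of $\iota(w)$ recovers $m_1, \ldots, m_{r-1}$ verbatim, and subtracting $\Delta$ from the length of the final $0$-run recovers $m_r$. The main potential obstacle is ensuring that the Catalan partial-sum inequalities survive the deformation, but this is handled by the single chain $N'_j \ge N_j \ge M_j$ above. I do not expect to need Proposition \ref{proposition:dom}, though as an alternative plan one could reduce a general dominance step into elementary moves (either incrementing a single coordinate of $\n$ by one, or shifting a single unit from $n_{j+1}$ into $n_j$) and handle each case separately; the direct injection described here sidesteps this case analysis and yields the bound $|\CF(\n)| \le |\CF(\n')|$ in one stroke.
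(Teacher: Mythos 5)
Your proposed injection is precisely the one the paper uses: send $w(\n,\m)$ to $w(\n',\m')$ with $\m' = (m_1,\ldots,m_{r-1},m_r+\Delta)$, and verify Catalanicity of the image by chaining $N'_j \ge N_j \ge M_j$ for $j < r$ (the paper phrases this as transitivity of domination via Proposition \ref{proposition:dom}, which you simply inline). The argument is correct and essentially identical to the paper's.
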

\begin{proof}
Set $N := \sum_{i = 1}^r n_i$ and $N' := \sum_{i = 1}^r n'_i$, and define the difference $D := N' - N \geq 0$ (nonnegativity follows from the fact that $\n' \succcurlyeq \n$).  Define an injection $\CF(\n) \rightarrow \CF(\n')$ by sending a word $w(\n, \m) \in \CF(\n)$ to $w(\n', \m')$, where
\begin{equation*}
\m' = (m'_1, \dots, m'_{r-1}, m'_r) := (m_1, \dots, m_{r-1}, m_r + D) = \m + (0, \dots, 0, D).
\end{equation*}
Proposition \ref{proposition:dom} implies that $\n' \succcurlyeq \n \succcurlyeq \m$, which when combined with the fact that $\sum_{i=1}^r m'_i = N'$ gives $\n' \succcurlyeq \m'$, so $w(\n', \m') \in \CF(\n').$  The map is clearly injective, so the claimed inequality holds.
\end{proof}

The special case where $N = N'$ is worth addressing separately, as we can precisely identify the difference between the sets $\CF(\n)$ and $\CF(\n')$.  Note that the corresponding statement also holds for $\LT(\n)$ thanks to Theorem \ref{theorem:CFLT}.
\begin{proposition}
\label{proposition:CFnCFn'}
Suppose that $\n = (n_1, \dots, n_i, n_{i+1}, \dots, n_r)$, and define $\n' := (n_1, \dots, n_i + 1, n_{i+1} -1, \dots, n_r)$.  Then
\begin{equation*}
|\CF(\n')|- |\CF(\n)| = |\CF(n_1, \dots, n_{i-1}, n_i + 1)| \cdot |\CF(n_{i+1} - 1, n_{i+2}, \dots, n_r)|.
\end{equation*}
\end{proposition}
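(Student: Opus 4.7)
The plan is to apply Lemma \ref{lemma:CFnCFn'} and then explicitly identify the complement of the injection it produces. Since $\n$ and $\n'$ have the same total sum, the quantity $D$ in the proof of that lemma is $0$, so the injection reduces to the map $w(\n,\m) \mapsto w(\n',\m)$ that preserves the $0$-block sequence $\m$ and merely moves one $1$ from run $i+1$ to run $i$. Comparing the Catalan partial-sum inequalities, the conditions for $\n$ and for $\n'$ agree at every index $j \neq i$, while at index $i$ the $\n'$-condition $n'_1 + \cdots + n'_i \geq m_1 + \cdots + m_i$ is exactly one weaker than the $\n$-condition. Hence $w(\n',\m) \in \CF(\n')$ fails to lie in the image precisely when partial-sum equality holds at position $i$; denote this set by $\CF_{=}(\n')$, so
\begin{equation*}
|\CF(\n')| - |\CF(\n)| = |\CF_{=}(\n')|.
\end{equation*}

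Next I will establish a bijection
\begin{equation*}
\CF_{=}(\n') \longleftrightarrow \CF(n_1, \dots, n_{i-1}, n_i + 1) \times \CF(n_{i+1} - 1, n_{i+2}, \dots, n_r)
\end{equation*}
by splitting at the point where the associated Dyck path returns to the baseline. Given $w(\n',\m) \in \CF_{=}(\n')$, define
\begin{align*}
w_1 & := 1^{n_1} 0^{m_1} \cdots 1^{n_{i-1}} 0^{m_{i-1}} 1^{n_i+1} 0^{m_i}, \\
w_2 & := 1^{n_{i+1}-1} 0^{m_{i+1}} 1^{n_{i+2}} 0^{m_{i+2}} \cdots 1^{n_r} 0^{m_r}.
\end{align*}
The balance of $w_1$ is exactly the partial-sum equality at $i$, and its Catalan inequalities for $j \leq i$ are inherited from $w(\n',\m)$. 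The word $w_2$ is balanced since the totals of $\n'$ and $\m$ agree and the initial segments cancel, and it is Catalan by subtracting the equality at $i$ from the $\n'$-inequalities at indices $j > i$. The inverse map is concatenation: for any $(w_1, w_2)$ in the right-hand product, the word $w_1 w_2$ lies in $\CF(\n')$ because its Catalan inequalities split into those of $w_1$ (for $j \leq i$) and a telescoping combination of the balance of $w_1$ with those of $w_2$ (for $j > i$), while the equality at position $i$ is automatic from balance of $w_1$.

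The remaining work is routine bookkeeping. In particular, a Catalan word must end in $0$, which forces the trailing block $0^{m_i}$ of $w_1$ to be nonempty, so the concatenation is a legitimate parsed element of $W(\n')$ even when $m_i = 0$ would otherwise create consecutive $1$-runs. Edge cases such as $i = 1$, $i = r-1$, or $n_{i+1} = 1$ are handled under the convention that a leading empty $1$-block in $\CF(0, n_{i+2}, \dots, n_r)$ is dropped, reducing to $\CF(n_{i+2}, \dots, n_r)$. I expect no substantive obstacle; the essential content is the observation that the \emph{extra} words in $\CF(\n')$ are precisely those for which the Dyck path is forced to touch the diagonal after the $i$-th run, and such words factor uniquely as a concatenation of two independent Catalan pieces.
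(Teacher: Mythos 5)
Your proposal is correct and follows essentially the same route as the paper's proof: identify the complement of the injection from Lemma \ref{lemma:CFnCFn'} (which with $D=0$ preserves $\m$) as exactly those $w(\n',\m)$ with partial-sum equality at position $i$, and then observe these factor uniquely as a concatenation of two Catalan pieces of the prescribed types. You simply spell out more of the bookkeeping (balance of $w_1$, telescoping of the inequalities for $w_2$, nonemptiness of $0^{m_i}$) that the paper leaves as "standard."
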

\begin{proof}
This is a standard type of result that arises in the study of Catalan-type structures.  Recall the injective procedure from the proof of Lemma \ref{lemma:CFnCFn'}, and consider a word $w' = (\n',\m') \in \CF(\n')$ that is not the image of a word in $\CF(\n)$.  This means that $w = (\n, \m')$ is not a Catalan word, which can only happen if $n_1 + \dots + n_i = m'_1 + \dots + m'_i - 1$ (every other truncation of $\n$ and $\n'$ have the same sum).  Thus $w' = w_1 w_2$ with $w_1 \in \CF(n_1, \dots, n_{i-1}, n_i + 1)$ and $w_2 \in \CF(n_{i+1} - 1, \dots, n_r).$
\end{proof}
\begin{remark*}
For any pair $\n' \succcurlyeq \n$ with $N = N'$, there is a finite sequence of the adjacent shifts from Proposition \ref{proposition:CFnCFn'} that transforms $\n$ to $\n'$.  Therefore the difference $|\CF(\n')|- |\CF(\n)|$ can also be written explicitly as a finite sum of similar terms.
\end{remark*}

If the inputs are weakly increasing, then the correspondence in Corollary \ref{corollary:phiLT} combines with Proposition \ref{proposition:CFnCFn'} to imply an inequality for $\phi^*$ as well.
\begin{corollary}
\label{corollary:phi*ineq}
If $n'_1, \dots, n'_r$ is a weakly increasing sequence that satisfies $n'_i \leq n'_{i+1} - 1$ and $n'_{j-1} \leq n'_j - 1$ for some $i < j$, then
\begin{equation*}
\phi^*(n'_1, \dots, n'_i, \dots, n'_j, \dots, n'_r) \leq \phi^*(n'_1, \dots, n'_i + 1, \dots, n'_j - 1, \dots, n'_r).
\end{equation*}
\end{corollary}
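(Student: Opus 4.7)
The plan is to deduce the inequality directly from the correspondence with Catalan words established earlier in this section. Set $\n := (n'_1,\ldots,n'_r)$ and let $\n^+ := (n'_1,\ldots,n'_i+1,\ldots,n'_j-1,\ldots,n'_r)$ denote the modified sequence. Two preliminary observations do essentially all of the work.

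First I would verify that $\n^+$ is weakly increasing. The hypothesis $n'_i \leq n'_{i+1}-1$ gives $n'_i+1 \leq n'_{i+1}$, so weak monotonicity is preserved at the transition from position $i$ to $i+1$; the hypothesis $n'_{j-1} \leq n'_j - 1$ is preserved as $n'_{j-1} \leq n'_j - 1$ at the transition from $j-1$ to $j$. All other adjacent differences are unchanged. Next, I would observe that $\n^+ \succcurlyeq \n$: the partial sums of $\n^+$ and $\n$ agree at every index strictly less than $i$ (the sequences are identical there) and at every index at least $j$ (the $+1$ at position $i$ and the $-1$ at position $j$ cancel), while for $i \leq k < j$ only the $+1$ has contributed, so the partial sum of $\n^+$ exceeds that of $\n$ by exactly $1$. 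Both have the same total sum.

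With these two facts in hand, the inequality follows immediately by chaining results from this section. Since $\n$ and $\n^+$ are both weakly increasing, Corollary \ref{corollary:phiLT} gives $\phi^*(\n) = |\LT(\n)|$ and $\phi^*(\n^+) = |\LT(\n^+)|$. Theorem \ref{theorem:CFLT} then identifies these cardinalities with $|\CF(\n)|$ and $|\CF(\n^+)|$, respectively. Finally, Lemma \ref{lemma:CFnCFn'} applied to the dominance $\n^+ \succcurlyeq \n$ yields $|\CF(\n)| \leq |\CF(\n^+)|$, and combining these equalities and this inequality produces the corollary. There is no substantive obstacle: the corollary is essentially a translation of a dominance inequality on Catalan-word counts through the bijections of Theorems \ref{theorem:WLT} and \ref{theorem:CFLT}, and the only point requiring care is the verification that $\n^+$ is weakly increasing, which is precisely what the two hypotheses on consecutive differences are arranged to guarantee.
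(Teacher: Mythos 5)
Your proof is correct and close in spirit to the paper's, with a minor streamlining. Both arguments translate $\phi^*$ into counts of labeled trees and Catalan words via Corollary \ref{corollary:phiLT} and Theorem \ref{theorem:CFLT}, apply a dominance inequality for $|\CF(\cdot)|$, and translate back. The one genuine difference lies in the middle step: you observe at once that $\n^+ \succcurlyeq \n$ (same total, partial sums larger by exactly $1$ on the interval $[i,j)$) and invoke Lemma \ref{lemma:CFnCFn'} in a single application, whereas the paper iterates the adjacent-shift comparison of Proposition \ref{proposition:CFnCFn'}, carrying the $+1$ stepwise from position $j-1$ down to position $i$ through a chain of intermediate $\LT$ counts. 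Your route is cleaner; the paper's chain, at the cost of length, has the side benefit of expressing the gap $|\CF(\n^+)| - |\CF(\n)|$ as a telescoping sum of explicit products (cf.\ the remark following Proposition \ref{proposition:CFnCFn'}).

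A caveat that applies equally to the paper's own proof: your verification that $\n^+$ is weakly increasing silently assumes $j > i+1$. When $j = i+1$ the two hypotheses degenerate to the single inequality $n'_i \le n'_{i+1}-1$, and the relevant adjacent comparison in $\n^+$ is between $n'_i + 1$ and $n'_{i+1} - 1$ (not between $n'_{j-1}$ and $n'_j - 1$ as you write); when $n'_i = n'_{i+1}-1$ this is a strict descent, $\n^+$ fails to be weakly increasing, and Corollary \ref{corollary:phiLT} gives only $\phi^*(\n^+) \le |\LT(\n^+)|$, which is the wrong direction. Concretely, $\phi^*(1,1,2,3) = 19 > 18 = \phi^*(1,2,1,3)$ with $(i,j)=(2,3)$, while $|\LT(1,2,1,3)| = 23$. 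The corollary as stated should therefore require $j \ge i+2$; this is a missing hypothesis in the paper's statement rather than a defect peculiar to your argument, and with that hypothesis added both proofs are sound.
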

\begin{proof}
Corollary \ref{corollary:phiLT} implies that $\phi^*(n'_1, \dots, n'r) = |\LT(n'_1, \dots, n'_r)|,$ and Proposition \ref{proposition:CFnCFn'} and Theorem \ref{theorem:CFLT} further give that
\begin{align}
\label{eq:LTineq}
|\LT(n'_1, \dots, n'_r)| & \leq |\LT(n'_1, \dots, n_{j-1} + 1, n_j - 1, \dots, n_r)| \\
& \leq |\LT(n'_1, \dots, n_{j-2} + 1, n_{j-1}, n_j - 1, \dots, n_r)| \notag \\
& \hspace{100 pt} \vdots \notag \\
& \leq |\LT(n'_1, \dots, n_i + 1, \dots, n_j - 1, \dots, n_r)| \notag \\
& = \phi^*(n'_1, \dots, n_i + 1, \dots, n_j - 1, \dots, n_r), \notag
\end{align}
where the last line again uses Corollary \ref{corollary:phiLT} and the inequality conditions for the $n_i$.
\end{proof}
\begin{remark*}
We only applied Corollary \ref{corollary:phiLT} to the first and last lines of (\ref{eq:LTineq}), and indeed, the Catalan words in the intermediate lines do not necessarily correspond bijectively to noncrossing pairings.  Thus it remains an open problem to prove Corollary \ref{corollary:phi*ineq} using only the combinatorics of noncrossing pairings.
\end{remark*}

\subsection{The Proof of Theorem \ref{Main Theorem}} \label{section Proof}  We now combine the machinery of Catalan structures from the previous subsections with the rotational symmetries of $\phi^*$ from Section \ref{section:Basic}.

We begin with additional definitions and notation.  Let $\Cr$ denote the cyclic group
of order $r$ (represented canonically by $[1, r]$), and consider arithmetic functions
$f:  \mathcal{C}_k \rightarrow \Te$.  For any $S \subseteq \Z_k$, the {\em weight} of $S$ is $f(S) : = \sum_{j \in S} f(j)$.  We are particularly interested in {\em intervals}, which are defined to be $\I(i,l):=\{i,i+1,...,i+l-1\} \subseteq \Cr$, where $\ell \in [1,r]$ is the {\em length}.
\begin{definition}
If $f, g: \Cr \rightarrow \Te$, we say $f$ is {\em cyclically dominated} by $g$ on $\I(i,l)$ if the sequence $(f(i), \ldots, f(i+\ell-1))$ is dominated by the sequence $(g(i), \ldots, g(i+\ell-1))$.  In this case, we write $g \succcurlyeq f.$
\end{definition}

We record some simple facts about domination and cyclic sequences that are fairly standard in the study of Catalan structures.
\begin{lemma}
\label{lemma:h0}
If $h(\Cr) = 0$, then $h \succcurlyeq 0$ on an interval $\I(i,r)$ for some $1 \leq i \leq r$.
\end{lemma}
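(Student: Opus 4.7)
The plan is to invoke the classical cycle-lemma style argument, choosing the starting index $i$ so that the partial sum just before $i$ is as small as possible; from there every cyclic partial sum will be non-negative.

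Concretely, I would set $S_0 = 0$ and $S_k = h(1) + h(2) + \cdots + h(k)$ for $1 \leq k \leq r$, noting that $S_r = h(\Cr) = 0$ by hypothesis. Choose an index $k^* \in \{0, 1, \ldots, r-1\}$ that minimizes $S_{k^*}$, and take $i = k^* + 1 \pmod{r}$ (interpreted as an element of $\Cr$). I claim that for every length $\ell \in [1, r]$, the partial sum
\begin{equation*}
\sum_{j=0}^{\ell-1} h(i + j) \;\geq\; 0,
\end{equation*}
which is precisely the domination condition $h \succcurlyeq 0$ on $\I(i, r)$ (applied for each truncation $\ell$).

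The verification splits into two cases according to whether the interval $\I(i, \ell)$ wraps around the cycle. If $k^* + \ell \leq r$, then the sum telescopes to $S_{k^* + \ell} - S_{k^*}$, which is $\geq 0$ by the minimality of $S_{k^*}$. If instead $k^* + \ell > r$, the sum splits as $(S_r - S_{k^*}) + (S_{k^* + \ell - r} - S_0) = S_{k^* + \ell - r} - S_{k^*}$, again using $S_r = S_0 = 0$, and this is also $\geq 0$ by minimality.

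There isn't really a serious obstacle here; the only subtle point is bookkeeping the cyclic indexing carefully so that the telescoping works in both the wrap and no-wrap cases, and confirming that the condition ``dominates for every $\ell \in [1,r]$'' (as required by the definition of $\succcurlyeq$ on an interval of length $r$) corresponds exactly to the minimality of $S_{k^*}$ among $S_0, \ldots, S_{r-1}$. Once that correspondence is written down explicitly, the lemma falls out in a few lines.
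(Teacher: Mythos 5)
Your proposal is correct and is essentially the paper's own argument: the paper sets $H(i) = \sum_{j=1}^{i-1} h(j)$ (which is your $S_{i-1}$), picks the index minimizing $H$, and observes that every cyclic partial sum starting there is a difference $H(i_0+\ell) - H(i_0) \geq 0$. The only difference is that you spell out the wrap-around case explicitly, which the paper leaves implicit.
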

\begin{proof}
Define the partial sums $H(i) := \sum_{j=1}^{i-1} h(j)$, and pick $i_0 \geq 1$ so that $H(i_0) \leq H(i)$ for all $i \geq 1$.  Then
\begin{equation*}
 h(\I(i_0,\ell)) = H(i_0+l) - H(i_0) \geq 0
\end{equation*}
for all $\ell$, so $h \succcurlyeq 0$ on $\I(i_0,r)$.
\end{proof}

\begin{corollary}
\label{corollary:fgdom}
If $f(\Cr) \leq g(\Cr)$, then $g \succcurlyeq f$ on some interval $\I(i,k)$.
\end{corollary}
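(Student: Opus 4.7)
The plan is to reduce the corollary to Lemma \ref{lemma:h0} by passing to the difference function and, if necessary, normalizing its total weight to zero.

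First, I would set $h := g - f \colon \Cr \to \Te$, so that the hypothesis $f(\Cr) \leq g(\Cr)$ translates into $c := h(\Cr) \geq 0$. If $c = 0$, Lemma \ref{lemma:h0} applies to $h$ directly and produces an index $i_0$ with $h \succcurlyeq 0$ on $\I(i_0, r)$, which is exactly $g \succcurlyeq f$ on $\I(i_0, r)$. Otherwise, I would define the shifted real-valued function $\tilde h(j) := h(j) - c/r$, which satisfies $\tilde h(\Cr) = 0$.

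Next, I would apply Lemma \ref{lemma:h0} to $\tilde h$ to obtain $i_0 \in [1,r]$ with $\tilde h \succcurlyeq 0$ on $\I(i_0, r)$. Unwinding the definition of cyclic domination, for every $\ell \in [1,r]$ this gives
\[
\sum_{j=0}^{\ell-1} \tilde h(i_0 + j) \;\geq\; 0,
\qquad\text{i.e.,}\qquad
h\bigl(\I(i_0,\ell)\bigr) \;\geq\; \frac{\ell\, c}{r} \;\geq\; 0.
\]
Hence $h \succcurlyeq 0$ on $\I(i_0, r)$, which is the desired conclusion $g \succcurlyeq f$ on that interval.

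The only point requiring care is that Lemma \ref{lemma:h0} as stated is about functions into $\Te$, not only integer-valued ones; its proof via choosing an index that minimizes the prefix sums $H(i)$ goes through verbatim for real-valued $\tilde h$, so no modification is needed. The ``main obstacle'' is thus mainly cosmetic --- one just has to recognize that the shift by $c/r$ turns the inequality hypothesis into the equality hypothesis of the preceding lemma without disturbing the cyclic partial-sum argument.
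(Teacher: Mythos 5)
Your proof is correct and follows essentially the same approach as the paper: pass to the difference $h = g - f$, normalize by a constant shift so that the total over $\Cr$ vanishes, apply Lemma~\ref{lemma:h0}, and undo the shift. In fact your write-up is slightly more careful than the paper's, which sets $h = (g-f) - c$ with $c = g(\Cr) - f(\Cr)$ and then claims $h(\Cr) = 0$; as written that total is $c(1-r)$, not $0$, so the shift should be by $c/r$ per coordinate exactly as you do. Your observation that Lemma~\ref{lemma:h0} applies verbatim to real-valued functions is also the right point to flag, since $c/r$ need not be an integer.
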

\begin{proof}
Let $c = g(\Cr) - f(\Cr) \geq 0$.  Let $h = (g-f) - c$ so that $h(\Cr) = 0$. Lemma \ref{lemma:h0} implies that $h \succcurlyeq 0$ on some interval $\I(i,r)$, so $g \succcurlyeq f + c \succcurlyeq f$ as well.
\end{proof}

We now use these results to prove a much more general version of Corollary \ref{corollary:phi*ineq}.  The key to the arguments is that while $\phi(\n, \m)$ is rotationally invariant, $\CF(\n)$ is not.  We exploit this by first using cyclic dominance to appropriately rotate the noncrossing partitions, injecting into Catalan words, where (noncyclic) dominance then gives our desired inequalities.
\begin{theorem}
\label{theorem:phiCF}
If $w(\n, \m)$ is balanced, then for any $\n' \succcurlyeq \n,$ there is an $i$ such that
\[\phi(\n,\m) \leq |\CF(\Rot_i(\n'))|.\]
\end{theorem}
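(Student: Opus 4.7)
The plan is to chain three existing tools: rotational invariance of $\phi$ from Corollary \ref{corollary:phisymmetry}, the injection $NC_2(w) \hookrightarrow \LT(\n)$ of Corollary \ref{corollary:phiLT} combined with the bijection $f_{\n}$ of Theorem \ref{theorem:CFLT}, and the dominance monotonicity of $|\CF|$ from Lemma \ref{lemma:CFnCFn'}. The delicate aspect is reconciling the cyclic flexibility of $\phi$ with the linear-dominance hypothesis needed by Lemma \ref{lemma:CFnCFn'}, and this is precisely what the freedom to choose the rotation $i$ in the conclusion permits.

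First I would produce a whole family of upper bounds, parametrized by the rotation. Iterating Corollary \ref{corollary:phisymmetry} twice yields the block-rotation identity $\phi(\n, \m) = \phi(\Rot_1(\n), \Rot_1(\m))$, and the $2i$-fold iterate gives $\phi(\n, \m) = \phi(\Rot_i(\n), \Rot_i(\m))$ for every $i$. Composing this with Corollary \ref{corollary:phiLT} and Theorem \ref{theorem:CFLT} yields
\[
\phi(\n, \m) \;\leq\; |\LT(\Rot_i(\n))| \;=\; |\CF(\Rot_i(\n))|,
\]
valid for each $i \in \{0, 1, \ldots, r-1\}$.

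The second step is to select the right $i$. Viewing $\n, \n'$ as functions on $\Cr$, the hypothesis $\n' \succcurlyeq \n$ specialized at $j = r$ gives $\n(\Cr) \leq \n'(\Cr)$, so Corollary \ref{corollary:fgdom} supplies an index $i_0$ on which $\n'$ cyclically dominates $\n$ throughout the interval $\I(i_0, r)$. Reading this as ordinary sequence dominance of the rotated tuples produces $\Rot_i(\n') \succcurlyeq \Rot_i(\n)$ after setting $i \equiv i_0 - 1 \pmod{r}$. Since all entries of $\Rot_i(\n)$ and $\Rot_i(\n')$ remain in $\N_+$, Lemma \ref{lemma:CFnCFn'} applies and yields $|\CF(\Rot_i(\n))| \leq |\CF(\Rot_i(\n'))|$. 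Concatenating with the bound from the first step finishes the proof.

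The main technical point is bookkeeping: one must keep straight the three notions of rotation in play, namely the single-slot rotation of the interleaved tuple $(n_1, m_1, \ldots, n_r, m_r)$ that underlies Corollary \ref{corollary:phisymmetry}, the block-level rotation of $(\n, \m)$ obtained by iterating it an even number of times, and the cyclic rotation of the bitstring $w(\n, \m)$ that corresponds to both. Modulo these index conversions no substantively new combinatorics is required, so I do not expect any hidden obstacle beyond the care demanded by the tri-indexed rotational setup.
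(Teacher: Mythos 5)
Your proposal is correct and follows essentially the same path as the paper: use $\n'(\Cr) \geq \n(\Cr)$ and Corollary \ref{corollary:fgdom} to locate a rotation on which ordinary (non-cyclic) dominance holds, invoke the rotation invariance of $\phi$ from Corollary \ref{corollary:phisymmetry}, then chain $\phi(\Rot_i(\n),\Rot_i(\m)) \leq |\LT(\Rot_i(\n))| = |\CF(\Rot_i(\n))| \leq |\CF(\Rot_i(\n'))|$ via Corollary \ref{corollary:phiLT}, Theorem \ref{theorem:CFLT}, and Lemma \ref{lemma:CFnCFn'}. The only quibble is a harmless off-by-one in translating the interval index $i_0$ into a rotation index (the paper's convention makes $\Rot_{i_0}$, not $\Rot_{i_0-1}$, the correct choice), which does not affect the argument.
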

\begin{proof}
Corollary \ref{corollary:fgdom} implies that there is some $i$ such that $\Rot_i(\n') \succcurlyeq \Rot_i(\n),$ and by Lemma \ref{corollary:phisymmetry} we have $\phi(\n,\m) = \phi(\Rot_i(\n),\Rot_i(\m))$.  Corollary \ref{corollary:phiLT} combined with Theorem \ref{theorem:CFLT} and Lemma \ref{lemma:CFnCFn'} then give
\begin{equation*}
\phi(\n, \m) = \phi(\Rot_i(\n),\Rot_i(\m)) \leq |\CF(\Rot_i(n))| \leq |\CF(\Rot_i(\n'))|
\end{equation*}
as claimed.
\end{proof}

\begin{corollary}
\label{corollary:phiCnr}
If $r(n-1) < \sum_i n_i \leq n r$ then $\phi(\n,\m) \leq C^{(n)}_r.$
\end{corollary}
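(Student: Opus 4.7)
The plan is to apply Theorem \ref{theorem:phiCF} with the constant comparison sequence $\n' := (n,n,\ldots,n) \in \N_+^r$. The upper bound $\sum_i n_i \le nr = \sum_i n'_i$ is exactly the hypothesis of Corollary \ref{corollary:fgdom} (with $f = \n$ and $g = \n'$), which produces an interval on which $\Rot_i(\n')$ dominates $\Rot_i(\n)$ — i.e., $\n' \succcurlyeq \n$ cyclically, as required to invoke Theorem \ref{theorem:phiCF}. Note that the lower bound $r(n-1) < \sum_i n_i$ plays no role in the inequality itself; it only pins down $n = \lceil (\sum_i n_i)/r \rceil$, making the statement the sharpest of the family.

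Applying Theorem \ref{theorem:phiCF} then gives $\phi(\n,\m) \le |\CF(\Rot_i(\n'))|$ for some $i$. Because $\n'$ is a constant sequence, $\Rot_i(\n') = \n'$, so the bound collapses to $\phi(\n,\m) \le |\CF(n,n,\ldots,n)|$, and it remains to identify this cardinality with $C^{(n)}_r$.

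To do this identification, I would use the chain of equalities assembled earlier in the section. Since $\n'$ is (trivially) weakly increasing, Theorem \ref{theorem:CFLT} gives $|\CF(\n')| = |\LT(\n')|$, and Corollary \ref{corollary:phiLT} then gives $|\LT(\n')| = \phi^*(\n') = \phi((1^n 0^n)^r)$. Finally, Proposition \ref{prop Fuss-Catalan} evaluates this as $C^{(n)}_r$, completing the proof.

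There is essentially no obstacle beyond assembling these previously-established tools in the correct order; the only subtlety worth flagging is the rotation-invariance of the constant sequence, which is what lets us discard the unspecified rotation index $i$ produced by Theorem \ref{theorem:phiCF}.
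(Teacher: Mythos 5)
Your proof is correct and follows essentially the same route as the paper: apply Theorem \ref{theorem:phiCF} with the constant sequence $\n' = (n,\ldots,n)$, and then chain Theorem \ref{theorem:CFLT}, Corollary \ref{corollary:phiLT}, and Proposition \ref{prop Fuss-Catalan} to identify $|\CF(\n')| = \phi^*(\n') = C^{(n)}_r$. The only difference is cosmetic: you explicitly flag that $\Rot_i(\n') = \n'$ for the constant sequence (which the paper leaves implicit) and that the lower bound $r(n-1) < \sum_i n_i$ is not used in the inequality but merely determines the sharp choice of $n$.
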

\begin{proof}
Define the length $r$ word $\n' := (n, n, \dots, n).$  Proposition \ref{prop Fuss-Catalan}, Corollary \ref{corollary:phiLT}, and Theorems \ref{theorem:CFLT} and \ref{theorem:phiCF} together imply that
$\phi(\n,\m) \leq |\CF(\n')| = \phi^*(\n') = C^{(n)}_r.$
\end{proof}
This is nothing more than a slightly rewritten version of Theorem \ref{Main Theorem}, and the proof is complete.

% Section 5
\section{Conclusion}
\label{section:Conclusion}

One of the chief difficulties in evaluating $\phi$ explicitly is that the rotational invariance of Corollary \ref{corollary:phisymmetry} is ``too strong''.  The fact that the arguments to $\phi$ may be freely rotated leads to very complicated formulas like Theorem \ref{theorem:phi*formula}, which is already greatly restricted to the symmetric case.  This is in contrast to the general formula for $\LT(\n)$ in Proposition \ref{proposition:LTformula}, which is relatively simple for all inputs.

In fact, in a very real sense, noncrossing pairings are the ``wrong'' generalization of Fuss-Catalan numbers.  In a forthcoming paper \cite{MS}, two of the authors consider a large number of other generalized Catalan structures (including the labeled trees and Catalan words of section {\ref{section:MainProof}, and also decompositions of polygons and lattice paths) that share many more properties with classical Catalan structures \cite{Stanley Catalan}.  Furthermore, we prove a very elegant symmetry result that can be stated as follows: If an $(n_1 + n_2 + \dots + n_r + 2)$-gon is decomposed into an $(n_1+2)$-gon, $(n_2+2)$-gon, \dots, and an $(n_r+2)$-gon, then the total number of such decompositions depends only on the sum $n_1 + n_2 + \dots + n_r$.


\begin{thebibliography}{88}

\bibitem{Bisch Jones} Bisch, D.; Jones, V.: {\em Algebras associated to intermediate subfactors.} Invent. Math. {\bf 128}, 89--157 (1997)

\bibitem{REU} Chou, E.; Fricano, A.; Kemp, T.; Poh, J.; Shore, W.; Whieldon, G.; Wong, T.; Zhang, Y.: {\em Convex posets in non-crossing pairings on bitstrings.}  Preprint.

\bibitem{Edelman} Edelman, P.: {\em Chain enumeratation and non-crossing partitions.} Discrete Math. {\bf 31}, 171--180 (1980)

\bibitem{Haagerup} Haagerup, U.: {\em Random matrices, free probability and the invariant subspace problem relative to a von Neumann algebra.} Proceedings of the International Congress of Mathematicians, Vol. I (Beijing, 2002),  273--290

\bibitem{HKS} Haagerup, U.; Kemp, T.; Speicher, R.: {\em Resolvents of $\mathscr{R}$-diagonal operators.}  To appear in Trans. Amer. Math. Soc.

\bibitem{Kemp dilation} Kemp, T.: {\em $\mathscr{R}$-diagonal dilation semigroups.}  To appear in Math. Z.

\bibitem{Kemp Speicher} Kemp, T.; Speicher, R.: {\em Strong Haagerup inequalities for free $\mathscr{R}$-diagonal elements.}    J. Funct. Anal.  251  (2007),  no. 1, 141--173.

\bibitem{Kreweras} Kreweras, G.: {\em Sur les partitions non-croisses d'un cycle.} Discrete Math. {\bf 1}, 333--350 (1972)

\bibitem{Larsen} Larsen, F.: {\em Powers of $R$-diagonal elements.}  J. Operator Theory {\bf 47}  no. 1, 197--212 (2002)

\bibitem{MS} Mahlburg, K., and Smyth, C.: {\em Symmetries in generalized Catalan structures}, in preparation.

\bibitem{Nica Speicher Fields Paper} Nica, A.; Speicher, R.: {\em $\mathscr{R}$-diagonal pairs---a common approach to Haar unitaries and circular elements.} Fields Inst. Commun., {\bf 12}, 149--188 (1997)

\bibitem{Nica Speicher Book} Nica, A.; Speicher, R.: {\em Lectures on the Combinatorics of Free Probability.}
London Mathematical Society Lecture Note Series, no. 335, Cambridge University Press, 2006

\bibitem{Oravecz} Oravecz, F.: {\em On the powers of Voiculescu's circular element.} Studia Math. {\bf 145} 85--95 (2001)

\bibitem{Speicher Math. Anal. Paper} Speicher, R.: {\em Multiplicative functions on the lattice of noncrossing partitions and free convolution.} Math. Ann. {\bf 298}, 611--628 (1994)

\bibitem{Stanley Catalan} Stanley, R.: {\em Catalan Addendum}, online \texttt{http://math.mit.edu/~rstan/ec/catadd.pdf}, accessed Sep. 27, 2008.

\bibitem{Stanley} Stanley, R.: {\em Enumerative Combinatorics Vol. 2}, Cambridge University Press, Cambridge, MA, 1997.

\bibitem{Voiculescu Dykema Nica} Voiculescu, D.V.; Dykema, K.; Nica, A.: {\em Free random variables.}  CRM Monograph Series, {\bf 1}.  American Mathematical Society, Providence, RI, 1992.

\end{thebibliography}
\end{document}